  \newtheoremstyle{mystyle}
    {}
    {}
    {\normalfont}
    {}
    {\bfseries}
    {}
    { }
    {}
\theoremstyle{definition}
\newtheorem{definition}{Definition}[section]
\newtheorem{theorem}[definition]{Theorem}
\newtheorem{proposition}[definition]{Proposition}
\newtheorem{lemma}[definition]{Lemma}
\newtheorem{corollary}[definition]{Corollary}
\newtheorem{conjecture}[definition]{Conjecture}
\newtheorem{hypothesis}[definition]{Hypothesis}
\theoremstyle{mystyle}
\newtheorem*{proposition*}{Proposition}
\newtheorem*{corollary*}{Corollary}
\newtheorem*{conjecture*}{Conjecture}
\newtheorem*{Ac}{Acknowledgement}
\newtheorem*{theorem*}{Theorem}
\theoremstyle{remark}
\newtheorem{remark}[definition]{Remark}
\crefname{definition}{Definition}{Definitions}
\crefname{theorem}{Theorem}{Theorems}
\crefname{proposition}{Proposition}{Propositions}
\crefname{lemma}{Lemma}{Lemmas}
\crefname{corollary}{Corollary}{Corollaries}
\crefname{conjecture}{Conjecture}{Conjectures}
\crefname{hypothesis}{Hypothesis}{Hypotheses}
\crefname{remark}{Remark}{Remarks}
\crefname{condition}{Condition}{Conditions}
\crefname{example}{Example}{Examples}
\def\C{\mathbb{C}}
\def\R{\mathbb{R}}
\def\Q{\mathbb{Q}}
\def\Z{\mathbb{Z}}
\def\H{\mathbb{H}}
\def\GL{\mathrm{GL}}
\def\SL{\mathrm{SL}}
\def\Sp{\mathrm{Sp}}
\def\GSp{\mathrm{GSp}}
\def\M{\mathrm{M}}
\def\rd{\,\mathrm{d}}
\def\idots{\reflectbox{$\ddots$}}
\def\Cc{C_c^{\infty}}
\def\bs{\backslash}
\def\idots{\reflectbox{$\ddots$}}
\DeclareMathOperator{\diag}{diag} 
\DeclareMathOperator{\bc}{bc}
\DeclareMathOperator{\Gal}{Gal}
\DeclareMathOperator{\Res}{Res}
\DeclareMathOperator{\Ind}{Ind}
\DeclareMathOperator{\Hom}{Hom}
\DeclareMathOperator{\rec}{rec}
\DeclareMathOperator{\Cent}{Cent}
\DeclareMathOperator{\Irr}{Irr}
\DeclareMathOperator{\sgn}{sgn}
\DeclareMathOperator{\Nil}{Nil}
\DeclareMathOperator{\tr}{tr}
\DeclareMathOperator{\Nm}{N}
\DeclareMathOperator{\JL}{JL}
\newcommand{\fa}{\mathfrak{a}}
\newcommand{\fg}{\mathfrak{g}}
\newcommand{\fh}{\mathfrak{h}}
\newcommand{\fS}{\mathfrak{S}}
\newcommand{\cC}{\mathcal{C}}
\newcommand{\cL}{\mathcal{L}}
\newcommand{\cN}{\mathcal{N}}
\newcommand{\cO}{\mathcal{O}}
\newcommand{\cT}{\mathcal{T}}
\newcommand{\cW}{\mathcal{W}}
\newcommand{\bfT}{\mathbf{T}}
\newcommand{\bfP}{\mathbf{P}}
\newcommand{\bfN}{\mathbf{N}}
\newcommand{\bfG}{\mathbf{G}}
\newcommand{\bfH}{\mathbf{H}}
\newcommand{\bzero}{\mathbf{0}}
\numberwithin{equation}{section}
\newcommand{\disc}{\mathrm{disc}}
\newcommand{\geom}{\mathrm{geom}}
\newcommand{\spec}{\mathrm{spec}}
\newcommand{\elliptic}{\mathrm{ell}}
\newcommand{\semi}{\mathrm{ss}}
\newcommand{\reg}{\mathrm{reg}}
\newcommand{\gen}{\mathrm{gen}}
\newcommand\myurl[1]{\url{#1}}
\newcommand\arxiv[1]{available at \href{https://arxiv.org/abs/#1}{arXiv:#1}}
\def\MR#1{\quad \href{http://www.ams.org/mathscinet-getitem?mr=#1}{MR#1}}
\title{A reformulation of the conjecture of Prasad and Takloo-Bighash}
\author{Miyu Suzuki\vspace{-7ex}}
\address{Miyu Suzuki \\
Department of Mathematics Faculty of Science\\
Kyoto University\\
Kitashirakawa Oiwake-cho, Sakyo-ku,
Kyoto 606-8502, Japan}
\email{suzuki.miyu.4c@kyoto-u.ac.jp}
\begin{document}

\maketitle
\medskip

\begin{center}
\emph{Dedicated to Tamotsu Ikeda on the occasion of his sixtieth birthday}
\end{center}

\begin{abstract}
Prasad and Takloo-Bighash proposed a conjecture which predicts a necessary condition in terms of epsilon factors for representations of $\GL_n(F)$ and its inner forms to have linear periods.
In this rather expository article,  we reformulate their conjecture in the following form: The distinguished members in each generic $L$-packet $\Pi_\phi$ are determined by the characters of the component group $S_\phi$ and local epsilon factors.
We follow Aubert et al.\,for the definitions of the $L$-packets and the component groups.

We observe that under some hypotheses,  the reformulated conjecture follows from the conjectural multiplicity formula recently proposed by Chen Wan for general spherical varieties and the conjectural integral formula for epsilon factors which we propose in this article.
\end{abstract}

\section{Introduction}
\label{sec:introduction}
   

Let $F$ be a local field of characteristic $0$.
Let $G$ be a reductive algebraic group over $F$ and $H$ a closed subgroup of $G$.
Suppose that a Borel aubgroup of $G$ has an open orbit in the homogeneous space $G/H$.
Such a pair $(G,  H)$ is called a \emph{spherical pair}. 

By a \emph{representation} of $G(F)$,  we mean a smooth admissible representation when $F$ is non-archimedean and a smooth admissible moderate growth Fr\'echet representation when $F$ is archimedean.
Let $\Irr(G(F))$ denote the set of isomorphism classes of irreducible representations of $G(F)$. 

Let $\chi$ be a character of $H(F)$.
For $\pi\in\Irr(G(F))$,  $m(\pi,  \chi):=\dim\Hom_{H(F)}(\pi,  \chi)$ is called the multiplicity of $(H,  \chi)$-periods on $\pi$.
We say that $\pi$ is \emph{$(H,  \chi)$-distinguished} if $m(\pi,  \chi)\neq0$.

To simplify the situation,  we focus on the case where $G$ is a classical group (or a product of classical groups).
In the local Langlands conjecture,  it is expected that there is a bijection between a Vogan $L$-packet $\Pi_\phi$ and the set of irreducible representations of the component group $S_\phi$.
See \cite[Section 9]{GGP12} for details.
For several spherical pairs $(G,  H)$,  there are conjectures which assert that each generic $L$-packet $\Pi_\phi$ of $G(F)$ contains unique distinguished member.
Moreover,  that unique distinguished representation corresponds to a character of $S_\phi$,  which is defined using epsilon factors.
Such a conjecture is called an \emph{epsilon dichotomy conjecture}.

The most typical example is the Gan-Gross-Prasad conjecture \cite[Conjecture 17.3]{GGP12}.
This conjecture is proved by Waldspurger and M\oe glin in the case of $p$-adic special orthogonal groups.

The key ingredient of the proof is the multiplicity formula $m(\pi,  \chi)=m_\geom(\pi,  \chi)$.
Here,  $m_\geom(\pi,  \chi)$ is called the geometric multiplicity,  which is given by some integrals involving the germs of the character $\theta_\pi$ of $\pi$.
In \cite{Wal10} and \cite{Wal12a},  Waldspurger proved the geometric and spectral expansions of a local relative trace formula and deduced the multiplicity formula for tempered representations.

In \cite{Wal12b},  he also proved the geometric and spectral expansions of a twisted local relative trace formula and obtained an integral formula for the epsilon factor,  which has a similar form to the multiplicity formula.
Combining these integral formulae with the endoscopic character relation,  he proved the Gan-Gross-Prasad conjecture for tempered representations of the $p$-adic special orthogonal groups in \cite{Wal12c}.
Finally,  M\oe glin and Waldspurger \cite{MW} reduced the problem for generic representations to the case of tempered ones and completed the proof.

This method is applied to many other situations,  for example by Beuzart-Plessis \cite{BP14},  \cite{BP15},  \cite{BP16},  \cite{BP18},  Wan \cite{Wan19a},  \cite{Wan19b},  Beuzart-Plessis and Wan \cite{BPW19},  \cite{BPW},  Wan and Zhang \cite{WZa},  \cite{WZb},  \cite{WZc}.
The archimedean cases of the Gan-Gross-Prasad conjecture is studied along the same line by Beuzart-Plessis \cite{BP20},   Luo \cite{Luo},  Chen \cite{Chen} and Chen and Luo \cite{CL}.
It is also remarkable that Wan \cite{Wan22} found a uniform definition of the geometric multiplicity $m_\geom(\pi,  \chi)$ and proposed a conjectural multiplicity formula for general spherical varieties.

\subsection{The original conjecture}
In \cite{PTB11},  Prasad and Takloo-Bighash proposed an epsilon dichotomy conjecture for linear periods of representations of $\GL_n(F)$ and its inner forms.

Let $F$ be a finite extension of $\Q_p$ or $F=\R$.
Take a positive integer $m$ and a central division algebra $D$ over $F$ of dimension $\dim_F(D)=d^2$.
We put $G=\GL_m(D)$ and $n=md$.
Then $G$ is an inner form of $\GL_n(F)$.
Let $Z_G$ denote the center of $G$.
For $\pi\in\Irr(G(F))$,  let $\omega_\pi$ be the central character.
Let $\Irr_\disc(G(F))$ denote the set of irreducible essentially square integrable representations of $G(F)$.

Let $W_F$ be the Weil group of $F$ and $WD_F$ the Weil-Deligne group of $F$,  which is
    \[
    WD_F=
        \begin{cases}
        W_F\times\SL_2(\C) & \text{if $F$ is a finite extension of $\Q_p$},  \\
        W_F & \text{if $F=\R$}.
        \end{cases}
    \]
By the local Langlands correspondence for $G(F)$,  each irreducible representation $\pi\in\Irr(G(F))$ corresponds to an $L$-parameter $\phi_\pi\,\colon WD_F\rightarrow\GL_n(\C)$.

Let $E$ be a quadratic extension of $F$ and $\chi$ a character of $E^\times$.
Let $\eta_{E/F}$ be the quadratic character of $F^\times$ corresponding to the extension $E/F$ via the local class field theory.

Assume that $n$ is even so that there is an embedding $E\hookrightarrow \M_m(D)$.
Such an embedding is unique up to conjugation by the Skolem-Noether theorem.
Let $H$ be the centralizer of $E^\times$ in $G$.
The group $H$ is an inner form of $\GL_{n/2}(E)$ hence is isomorphic to $\GL_r(C)$,  where $C$ is a central division algebra over $E$ such that
    \[
    C \cong 
        \begin{cases}
        \Cent_D(E) & \text{if $d$ is even} \\
        D\otimes_F E & \text{if $d$ is odd},
        \end{cases} \qquad 
    r = 
        \begin{cases}
        m & \text{if $d$ is even} \\
        \frac{m}{2} & \text{if $d$ is odd}.
        \end{cases}
    \]
Note that if $d$ is even,  $E$ is embedded into $D$ and $\Cent_D(E)$ denotes the centralizer of $E$ in $D$.

We write by $\chi_H$ the character of $H(F)$ obtained by composing $\chi$ with the reduced norm of the central simple algebra $\M_r(C)$ over $E$.
We say that a representation $\pi$ of $G(F)$ has \emph{linear periods} with respect to $(H,  \chi_H)$ if $m(\pi,  \chi)\neq0$.

\begin{conjecture}\label{conj:PTB}
Let $\pi\in\Irr(G(F))$ and $\phi\,\colon WD_F\rightarrow\GL_n(\C)$ be its $L$-parameter.
Let $\chi$ be a character on $E^\times$.
If $m(\pi,  \chi)\neq0$,  then
\setlength{\leftmargini}{25pt}
\begin{itemize}
\item[(1)]
the $L$-parameter $\phi$ takes values in $\GSp_n(\C)$ with similitude factor $\chi|_{F^\times}$.
Here,  $\chi|_{F^\times}$ is regarded as a character on $W_F$ via the reciprocity isomorphism $W_F^{ab}\cong F^\times$;
\item[(2)]
the root number satisfies 
$\varepsilon\left(\phi\otimes\Ind_{W_E}^{W_F}(\chi^{-1})\right)=(-1)^m\chi\eta_{E/F}(-1)^{\frac{n}{2}}$. 
\end{itemize}
For $\pi\in\Irr_\disc(G(F))$,  the converse holds, \emph{i.e.} if it satisfies (1) and (2),  then $m(\pi,  \chi)\neq0$.
\end{conjecture}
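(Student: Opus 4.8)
We do not prove Conjecture~\ref{conj:PTB} unconditionally; the plan is to reduce it, as in the body of this paper, to a reformulated statement that describes the distinguished member of $\Pi_\phi$ through a character of the component group $S_\phi$, and then to deduce that reformulation --- under some hypotheses --- from two conjectural inputs: Wan's multiplicity formula $m(\pi,\chi)=m_\geom(\pi,\chi)$ for the spherical pair $(G,H)$, together with the expectation that the distinguished members across the Vogan packet of all inner forms of $\GL_n$ are governed by $S_\phi$; and a companion integral formula, proposed in this article, that expresses the relevant local root number as a geometric expansion. The overall architecture mirrors the proofs of the Gan--Gross--Prasad conjecture by Waldspurger and Beuzart-Plessis.

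\emph{Step 1 (the parameter condition).} I would first handle assertion~(1). If $m(\pi,\chi)\neq0$ then $\pi$ is $(H,\chi_H)$-distinguished, and by the available local results on linear periods for $\GL_n(F)$ together with their inner-form analogues --- or, alternatively, by inspecting the support of $m_\geom(\pi,\chi)$, which can be non-zero only when $L(s,\wedge^2\phi\otimes(\chi|_{F^\times})^{-1})$ has a pole at $s=0$ --- one concludes that $\phi$ factors through $\GSp_n(\C)$ with similitude factor $\chi|_{F^\times}$. This isolates the generic packets $\Pi_\phi$ in which a distinguished member can occur. To such $\phi$ one attaches, following Aubert et al., the component group $S_\phi$ and a distinguished character $\eta_{\phi,\chi}$ of $S_\phi$ built from local epsilon factors exactly as in the Gan--Gross--Prasad recipe, and the reformulated conjecture asserts that the distinguished member of $\Pi_\phi$ is the one parametrized by $\eta_{\phi,\chi}$. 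One then checks --- this is the elementary part --- that the label of the member living on $G=\GL_m(D)$ equals $\eta_{\phi,\chi}$ if and only if conditions~(1) and~(2) hold, and that $\Irr_\disc(G(F))$ consists precisely of the members for which the converse is being claimed.

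\emph{Step 2 (two geometric expansions and their comparison).} Grant $m(\pi,\chi)=m_\geom(\pi,\chi)$, the right-hand side being an explicit sum, over the semisimple conjugacy classes entering Wan's definition, of local integrals against germs of the Harish-Chandra character $\theta_\pi$. The proposed epsilon integral formula furnishes, via the spectral expansion of a twisted local relative trace formula on the twisted space built from the symplectic-type outer automorphism of $\GL_n$ and the extension $E/F$, an analogous geometric sum pairing those same distributions against the \emph{stable} character $\theta^{\mathrm{st}}_\phi=\sum_{\pi\in\Pi_\phi}\langle\,\cdot\,,\pi\rangle\,\theta_\pi$, whose value records the $\{0,1\}$-valued quantity $m_\varepsilon(\phi,\chi)$ that tells whether~(2) holds given~(1). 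Summing $m_\geom(\pi,\chi)$ over the Vogan packet and using stability of $\theta^{\mathrm{st}}_\phi$, I would match the two geometric sides by a transfer of the pertinent (twisted) orbital integrals with their transfer factors, obtaining
\[
\sum_{\pi\in\Pi_\phi} m(\pi,\chi_H)\;=\;m_\varepsilon(\phi,\chi)\;\in\;\{0,1\}.
\]
Thus each generic packet contains at most one distinguished member, and one exists if and only if~(2) holds. Refining the comparison to the level of individual representations --- through the twisted endoscopic character identities expressing each $\langle\,\cdot\,,\pi\rangle$ by $\kappa$-twisted orbital integrals, combined with the explicit shape of $m_\geom$ --- would then show $m(\pi,\chi_H)\neq0$ precisely when $\langle\,\cdot\,,\pi\rangle=\eta_{\phi,\chi}$, which is the reformulated conjecture, hence Conjecture~\ref{conj:PTB}. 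For $\pi\in\Irr_\disc(G(F))$ the component groups and packets are small enough that non-vanishing of $m_\geom(\pi,\chi)$ as soon as~(1) and~(2) hold drops out of the same comparison.

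\emph{The main obstacle.} The crux is Step~2, and within it two genuinely hard ingredients. First, the proposed integral formula for $\varepsilon\!\left(\phi\otimes\Ind_{W_E}^{W_F}(\chi^{-1})\right)$ requires the geometric and spectral expansions of a twisted local relative trace formula on the twisted space above --- the analogue of \cite{Wal12b} --- in the $p$-adic case, and its archimedean counterpart in the style of Beuzart-Plessis; neither is available for arbitrary $\GL_m(D)$ and arbitrary $E/F$, which is precisely the source of the ``under some hypotheses'' proviso and the reason \cite{Wan22} enters only conjecturally. Second, the geometric comparison that matches the supports and transfer factors of the two expansions is the analogue of the most technical part of \cite{Wal12c}. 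A secondary, bookkeeping-level obstacle is to verify that the epsilon character on $S_\phi$ produced by the trace-formula comparison agrees, under the normalization of the local Langlands correspondence adopted here, with the $\eta_{\phi,\chi}$ appearing in the reformulation.
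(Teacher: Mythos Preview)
Your overall architecture --- reduce to a reformulation in terms of $S_\phi$, then compare a geometric multiplicity expansion with a geometric epsilon expansion coming from a twisted trace formula --- is indeed the paper's strategy. But Step~2 contains a structural error that would derail the argument.

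You assert $\sum_{\pi\in\Pi_\phi} m(\pi,\chi)\in\{0,1\}$ and conclude that each packet has at most one distinguished member. In this setting that is false: for a discrete parameter $\phi$ taking values in $\GSp_n(\C)$, the paper shows (Proposition~\ref{prop:deduce}) that $m(\Pi_\phi,\chi)=\tfrac{n}{2}$, so exactly \emph{half} of the $n$ members of the Vogan packet are distinguished. The component group $S_\phi$ here is cyclic of order dividing $n$, not an elementary abelian $2$-group as in Gan--Gross--Prasad, and the distinguished members are those whose character $\chi_\pi$ takes a prescribed value at $-1\in S_\phi$. Your formula $\sum m(\pi,\chi)=m_\varepsilon(\phi,\chi)$ cannot hold.

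Relatedly, the character identities you invoke are the wrong ones. The paper does not use twisted endoscopic character relations or $\kappa$-orbital integrals; it uses the Jacquet--Langlands character relation \eqref{eq:DKV} to compare $m_\geom$ across inner forms, and the Arthur--Clozel base change relation \eqref{eq:AC} to connect $m_\geom(\pi_0,\chi)-m_\geom(\pi_1,\chi)$ to the twisted character on $\tilde{\bfG}=\Res_{E/F}\GL_n\cdot\tau$. The twisted space is the base-change twisted space (the involution is $\tau(g)=wg^cw^{-1}$ with $c\in\Gal(E/F)$), not one built from a ``symplectic-type outer automorphism'' of $\GL_n$. The actual mechanism is: the Jacquet--Langlands relation plus the Shalika-model input (Hypothesis~\ref{hyp:2} and \cite{BPW19}) give $m(\pi_0,\chi)+m(\pi_1,\chi)=1$ for a pair with $\chi_{\pi_0}(-1)=-\chi_{\pi_1}(-1)$, while the base-change comparison with Conjecture~\ref{conj:epsilon} gives $m(\pi_0,\chi)-m(\pi_1,\chi)=\varepsilon(\pi',\chi)$; solving these yields $m(\pi,\chi)=\tfrac12(1+\chi_\pi(-1)\varepsilon(\pi',\chi))$.

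Finally, in Step~1 the paper does not argue via a pole of $L(s,\wedge^2\phi\otimes(\chi|_{F^\times})^{-1})$; condition~(1) is obtained (conditionally) from Hypothesis~\ref{hyp:2} together with the identification of the second term in $m_\geom$ with the generalized Shalika multiplicity from \cite{BPW19}.
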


Note that the last part of \cref{conj:PTB} is rephrased as follows.
Let $\pi\in\Irr_\disc(G(F))$,  $\phi$ be its $L$-parameter and set $\bc_{E/F}(\phi)=\phi|_{WD_E}$.
Then 
    \begin{equation}\label{eq:0}
    m(\pi,  \chi)=\frac12\left\{1+(-1)^m\varepsilon\left(\bc_{E/F}(\phi)\otimes\chi^{-1}\right)\chi(-1)^{\frac{n}{2}}\right\}
    \end{equation}
if $\phi$ satisfies (1) of \cref{conj:PTB} and $m(\pi,  \chi)=0$ otherwise.

The original conjecture \cite[Conjecture 1]{PTB11} is stated only when $F$ is non-archimedean and $\pi$ is ``generic'' but these restrictions are known to be unnecessary.
There has been recent progress toward this conjecture.

\begin{theorem}\label{thm:known_cases}
\cref{conj:PTB} holds in the following cases:
\setlength{\leftmargini}{25pt}
\begin{itemize}
\item[(a)] when $F$ is a finite extension of $\Q_p$ with $p\neq 2$ and $\chi=\mathbf{1}$ is the trivial character,
\item[(b)] when $F=\R$.
\end{itemize}
\end{theorem}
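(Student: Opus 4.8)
The plan is to reduce \cref{conj:PTB} to the multiplicity formula \eqref{eq:0} for $\pi\in\Irr_\disc(G(F))$ and then to bootstrap. Granting \eqref{eq:0} for discrete series, the necessity of conditions (1) and (2) for an arbitrary $\pi\in\Irr(G(F))$ should follow by parabolic descent: writing $\pi$ as the Langlands datum of an induced representation $\Ind_P^G\sigma$ with $\sigma$ a product of essentially square integrable representations, a geometric-lemma analysis of the linear period under parabolic induction (in the style of Matringe, Offen and others) shows that $H$-distinction of $\pi$ forces each essentially square integrable constituent of $\sigma$ to be distinguished by a linear-type period on the relevant Levi subgroup; conditions (1) and (2) for $\pi$ then follow from the discrete-series case together with the multiplicativity of local root numbers and the stability of symplectic-type parameters under direct sums. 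The genericity hypothesis in the original statement of \cite{PTB11} is removed by Aubert duality. Thus everything rests on \eqref{eq:0} for $\pi\in\Irr_\disc$, and it is here that the two cases diverge.

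For case (b) one first observes that, for discrete series, only very few groups occur. Over $\R$ the unique quadratic extension is $\C$, the inner forms of $\GL_n(\R)$ are $\GL_n(\R)$ and $\GL_{n/2}(\mathbb{H})$, and for $n$ even the set $\Irr_\disc(G(F))$ is nonempty only when $n=2$, namely for $G=\GL_2(\R)$ and for $G=\GL_1(\mathbb{H})=\mathbb{H}^\times$; in both cases $H$ is the maximal torus $\C^\times$ and $\chi_H=\chi$. For $n=2$ one has $\GSp_2=\GL_2$ with similitude the determinant, so condition (1) is exactly the equality of the central character of $\pi$ with $\chi|_{\R^\times}$, and the two values of $(-1)^m$ --- namely $+1$ for $\GL_2(\R)$ and $-1$ for $\mathbb{H}^\times$ --- precisely encode the two inner forms. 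Hence \eqref{eq:0} for discrete series in case (b) is the archimedean $\GL_2$ instance of the Tunnell--Saito dichotomy for toric periods, which is a theorem of Tunnell and Saito; one has only to match the normalization of $\varepsilon\!\left(\phi\otimes\Ind_{W_E}^{W_F}(\chi^{-1})\right)$ with the local constant occurring there, using Jacquet--Langlands to pass between $\GL_2(\R)$ and $\mathbb{H}^\times$. The bootstrap of the first paragraph then gives \cref{conj:PTB} for all even $n$ over $\R$.

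For case (a) the set $\Irr_\disc(G(F))$ is large and harmonic analysis is unavoidable; here one follows the Waldspurger--Beuzart-Plessis template recalled in \cref{sec:introduction}. The steps are: (i) prove a local relative trace formula for the spherical pair $(G,H)$, with geometric and spectral expansions in the style of Wan, yielding $m(\pi,\mathbf{1})=m_\geom(\pi,\mathbf{1})$ for $\pi\in\Irr_\disc(G(F))$; (ii) prove, via Waldspurger's twisted local trace formula, the companion integral formula expressing the normalized root number $\tfrac12\{1+(-1)^m\varepsilon(\bc_{E/F}(\phi))\chi(-1)^{n/2}\}$ as a geometric integral attached to the same orbital data; and (iii) identify the two geometric sides. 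After transporting both expansions along the Jacquet--Langlands correspondence to $\GL_n(F)$, step (iii) becomes a comparison of the germ expansions of $\theta_\pi$, carried out using the stable and endoscopic character relations and the compatibility of the local Langlands correspondence for $G(F)$, in the normalization of Aubert et al., with twisted endoscopy for $\GL_n\rtimes\theta$. Equivalently, one may route the argument through the known classification of linear-period-distinguished discrete series together with a direct root-number computation; either way, the restrictions $p\neq2$ and $\chi=\mathbf{1}$ delimit the range in which the relevant orbital integrals and local constants have actually been evaluated.

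The main obstacle is step (iii) of case (a): the two geometric sides are built from different objects --- the twisted-endoscopic stable character of an entire $L$-packet on the $\GL_n$ side against the Harish-Chandra character of a single irreducible representation of an inner form --- so reconciling them forces one to pin down every normalization at once, of the component-group pairing and of the pertinent epsilon and orbital integrals alike. Going beyond $p=2$, or to nontrivial $\chi$, seems out of reach at present precisely because the corresponding wildly ramified computations have not been carried out.
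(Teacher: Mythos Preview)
Your sketch for case (a) has a genuine gap: the Waldspurger--Beuzart-Plessis template you invoke is precisely the \emph{conjectural} framework that this paper is putting forward, not an existing proof. Step (i) is \cref{conj:mult_formula} and step (ii) is \cref{conj:epsilon}; neither is a theorem, and the paper's whole point is to formulate them. Your closing sentence, that ``the restrictions $p\neq2$ and $\chi=\mathbf{1}$ delimit the range in which the relevant orbital integrals and local constants have actually been evaluated,'' is false: no case of the multiplicity formula for $(G,H)$ has been established. The actual proof cited by the paper (\cite{Sec}, \cite{Suz21}, \cite{Xue21}, \cite{SX23}) proceeds by entirely different methods: S\'echerre treats supercuspidal $\pi$ via Bushnell--Kutzko type theory for $\GL_m(D)$; Xue proves the necessity direction using a global argument with a relative trace formula comparison; Suzuki--Xue complete the discrete-series case using intertwining periods; and Suzuki's \cite{Suz21} supplies the Mackey-theoretic reduction from generic $\pi$ to the discrete case (this is the content of \cref{prop:reduction_disc} here). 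Your ``equivalently, one may route the argument through the known classification'' aside is closer to the truth, but you present it as a variant of the trace-formula route rather than as the only route that actually exists.

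For case (b) your reduction of the discrete-series problem to $n=2$ and Tunnell--Saito is correct in spirit, and this is indeed one ingredient of \cite{ST23}. However, your bootstrap to arbitrary $\pi$ via ``a geometric-lemma analysis'' is $p$-adic language; over $\R$ the analogous reduction requires archimedean tools (Casselman--Wallach theory, the Bruhat filtration for $\Hom_H$ in the smooth Fr\'echet setting) and is not a routine transcription of the Mackey argument. The paper simply cites \cite[Theorem 1.1]{ST23} for the full archimedean statement.
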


\begin{proof}
The non-archimedean case (a) is a combination of several results from \cite{Sec},  \cite{Suz21},  \cite{Xue21} and \cite{SX23}.
The archimedean case (b) is \cite[Theorem 1.1]{ST23}.
\end{proof}

The main result of this article is that the equation \eqref{eq:0} follows from the conjectural multiplicity formula (\cref{conj:mult_formula}) and the  conjectural expression for the root number (\cref{conj:epsilon}).

Let $\bfG=\Res_{E/F}\GL_n$,  where $\Res_{E/F}$ denotes the Weil restriction of scalars.
Take $\pi\in\Irr_\disc(G(F))$.
WE write the Jacquet-Langlands transfer of $\pi$ to $\GL_n(F)$ by $\pi_0$.
Let $\pi'=\bc_{E/F}(\pi_0)\in\Irr(\bfG(F))$ be the base change of $\pi_0$ to $\bfG(F)$.

\begin{theorem*}\textbf{\ref{thm:epsilon}.}
In the above situation,  we set $\varepsilon(\pi',  \chi)=\varepsilon(\frac12,  \pi'\otimes\chi^{-1},  \psi_E)\chi(-1)^{\frac{n}{2}}$,  where $\psi_E$ is a non-trivial additive character of $E$.
Assume that 
\begin{itemize}
\item \cref{hyp:2},  \cref{conj:mult_formula} and \cref{conj:epsilon} hold.
\item $\pi'\in\Irr_\disc(\bfG(F))$.
\end{itemize}
Then we have 
    \[
    m(\pi,  \chi)=\frac12(1+\chi_\pi(-1)\varepsilon(\pi',  \chi))
    \]
if $\rec_{\bfG(F)}(\pi')$ takes values in $\GSp_n(\C)$ with similitude factor $\chi\circ\Nm_{E/F}$ and  $m(\pi,  \chi)=0$ otherwise.
Here,  $\chi_\pi$ is the character of the component group $S_\phi$ attached to $\pi$.
\end{theorem*}

\subsection{A reformulation}
\cref{conj:PTB} predicts a necessary condition for distinction in terms of epsilon factors.
It is natural to expect that we can rewrite their conjecture in a form similar to that of the Gan-Gross-Prasad conjecture,  that is,  the distinguished members in each generic $L$-packet $\Pi_\phi$ is determined by using a character of the component group $S_\phi$ and  epsilon factors.

The obvious obstacle to do so is,  in the usual formulation of the local Langlands correspondence for $\GL_n(F)$ and its inner forms,  every $L$-packet $\Pi_\phi$ is a singleton and the component group $S_\phi$ is the trivial group,  hence it seems meaningless to consider characters of $S_\phi$.
Following \cite[Section 2]{ABPS16},  we can treat the representations of $\GL_n(F)$ and its inner forms simultaneously and obtain $L$-packets which are not singletons.

In the rest of this section,  suppose that $F$ is non-archimedean.
For an $L$-parameter $\phi\,\colon WD_F\to\GL_n(\C)$,  let $\Pi_\phi$ be the set of representations of all inner forms of $\GL_n(F)$ corresponding to $\phi$.
we say that $\Pi_\phi$ is generic if it contains a generic representation of $\GL_n(F)$.
We also define the component group $S_\phi$ as
    \[
    S_\phi=Z_{\SL_n(\C)}(\phi)/Z_{\SL_n(\C)}(\phi)^\circ,
    \]
where $Z_{\SL_n(\C)}(\phi)$ is the centralizer of the image of $\phi$ by $\SL_n(\C)$ and $Z_{\SL_n(\C)}(\phi)^\circ$ is its identity component.
There is a natural bijection $\pi\leftrightarrow\chi_\pi$ between $\Pi_\phi$ and the set of characters of $S_\phi$.
Using these $L$-packet $\Pi_\phi$ and the component group $S_\phi$,  we can reformulate \cref{conj:PTB} as follows.

\begin{conjecture*}\textbf{\ref{conj:reform}.}
Suppose that $\phi$ takes values in $\GSp_n(\C)$ with similitude factor $\chi|_{F^\times}$ and $\Pi_\phi$ is generic.
We write $\phi$ as  
    \[
    \phi=\bigoplus_{i\in I_\phi}\phi_i \oplus 
    \bigoplus_{j\in J_\phi}(\phi_j \oplus (\phi_j^\vee\cdot\chi|_{F^\times})),  
    \]
where $I_\phi,  J_\phi$ are finite sets and each $\phi_k\,\colon WD_F\to\GL_{n_k}(\C)$ is a discrete $L$-parameter which satisfies
\begin{itemize}
\item for each $i\in I_\phi$,  $n_i$ is even and $\phi_i$ takes values in $\GSp_{n_i}(\C)$ with similitude factor $\chi|_{F^\times}$,
\item  for any $i,  i'\in I_\phi$,  we have $\phi_i\not\cong\phi_{i'}$ if $i\neq i'$.
\end{itemize}
Then for $\pi\in\Pi_\phi$,  $m(\pi,  \chi)\neq0$ if and only if 
    \[
    \varepsilon\left(\phi_i\otimes\Ind_{W_E}^{W_F}(\chi^{-1})\right)
    =\left(\chi_\pi(\zeta_n)\chi\eta_{E/F}(-1)\right)^{\frac{n_i}{2}},  \qquad i\in I_\phi,
    \]
where $\zeta_n:=\exp(2\pi\sqrt{-1}/n)$ is a primitive $n$-th root of unity.
\end{conjecture*}

\subsection{The conjectures on multiplicity formula}

For $\pi\in\Irr(G(F))$,  the geometric multiplicity $m_\geom(\pi,  \chi)$ is defined in \cite[Definition 7.1]{Wan22}.
See \cref{def:geom_mult1} and \cref{def:geom_mult2} for the precise definition.

\begin{conjecture*}\textbf{\ref{conj:mult_formula}.}
For $\pi\in\Irr_\disc(G(F))$,  we have $m(\pi,  \chi)=m_\geom(\pi,  \chi)$.
\end{conjecture*}

Under some hypothesis (\cref{hyp:2}) on the Shalika period,  \cref{conj:mult_formula} provides a simple formula for the number of distinguished representations in $ \Pi_\phi$ when $\phi$ is a discrete $L$-parameter.
\begin{proposition*}\textbf{\ref{prop:deduce}.}
Assume \cref{hyp:2} and \cref{conj:mult_formula} hold.
For $\phi\in\Phi_\disc(\GL_n(F))$,  
    \[
     \sum_{\pi\in\Pi_\phi}m(\pi,  \chi) =
        \begin{cases}
        n/2 & \text{if $\phi$ takes values in $\GSp_n(\C)$ with similitude factor $\chi|_{F^\times}$},  \\
        0 & \text{otherwise}.
        \end{cases}
    \]
\end{proposition*}

In particular,  we see that \cref{conj:reform} for $\chi=\mathbf{1}$ follows from the multiplicity formula.

\begin{corollary*}\textbf{\ref{cor:trivial}.}
When $\chi=\mathbf{1}$,  \cref{conj:mult_formula} implies \cref{conj:reform}.
\end{corollary*}

Suppose that $\pi\in\Irr_\disc(G(F))$ and $\pi_0\in\Irr_\disc(\GL_n(F))$ are in $\Pi_\phi$.
Let $\pi'\in\Irr(\GL_n(E))$ which corresponds to $\bc_{E/F}(\phi):=\phi|_{WD_E}$.
Assume that $\pi'\in\Irr_\disc(GL_n(E))$. 
In \cref{sec:root_number},  we define $\varepsilon_\geom(\pi',  \chi)$ which is given by integrals involving the germs of the twisted character of $\pi'$.

\begin{conjecture*}\textbf{\ref{conj:epsilon}.}\label{conj:2}
We have $\varepsilon_\geom(\pi',  \chi)=\varepsilon\left(\bc_{E/F}(\phi)\otimes\chi^{-1}\right)\chi(-1)^{\frac{n}{2}}$. 
\end{conjecture*}

Our main result is a consequence of \cref{conj:epsilon} combined with \cref{conj:mult_formula}.
The relations among these conjectures and results together with other ones in this article are summarized as follows.
    \[
    \xymatrix @R=15pt @C=50pt{
    &
    \setlength{\fboxsep}{0pt}
    \fbox{\begin{minipage}{63pt}
    \begin{tabular}{c}
     Conjecture  \\
    \ref{conj:geom_exp}
    \end{tabular}
    \end{minipage}}\ar @{=>}^{+\text{\cref{prop:geom_exp}}}[d]
    & \\ 
    \setlength{\fboxsep}{0pt}
    \fbox{\begin{minipage}{63pt}
    \begin{tabular}{c}
     Conjecture  \\
    \ref{conj:geom_exp2}
    \end{tabular}
    \end{minipage}}\ar @{=>}_{\text{\cref{hyp:2}}+}
    ^{\substack{+\text{\cref{hyp:3}} \\ +\text{\cref{hyp:4}}}}[d]
    \hspace{-2.5em} 
    &
    \setlength{\fboxsep}{0pt}
    \fbox{\begin{minipage}{63pt}
    \begin{tabular}{c}
     Conjecture  \\
    \ref{conj:mult_formula}
    \end{tabular}
    \end{minipage}}\ar @{=>}^{+\text{\cref{hyp:2}}}[d] \ar @{=>}^(0.47){\text{\cref{cor:trivial}}}[r]
    &
    \setlength{\fboxsep}{0pt}
    \fbox{\begin{minipage}{80pt}
    \begin{tabular}{c}
    \cref{conj:reform} \\
    for $\chi=\mathbf{1}$
    \end{tabular}
    \end{minipage}}                             \\
    \setlength{\fboxsep}{0pt}
    \fbox{\begin{minipage}{63pt}
    \begin{tabular}{c}
    Conjecture \\
    \ref{conj:epsilon}
    \end{tabular}
    \end{minipage}} \quad +\hspace{-4em} 
    &  
    \setlength{\fboxsep}{0pt}
    \fbox{\begin{minipage}{65pt}
    \begin{tabular}{c}
    Prospoition  \\
    \ref{prop:deduce}
    \end{tabular}
    \end{minipage}}  \ar @{=>}[r]
    & 
    \setlength{\fboxsep}{0pt}
    \fbox{\begin{minipage}{55pt}
    \begin{tabular}{c}
    Theorem  \\
    \ref{thm:epsilon}
    \end{tabular}
    \end{minipage}} 
    }\]

\begin{remark}
The anonymous referee suggested the problem extending \cref{conj:reform} to non-generic $L$-packets and relating that generalization with a general multiplicity formula.
It is natural to ask whether we can formulate such a generalized epsilon dichotomy conjecture along the lines of the non-tempered Gan-Gross-Prasad conjecture \cite{GGP20}.
This question in relation with the conjectural multiplicity formula \cite[Conjecture 7.6]{Wan 22} for general representations should be our future work.
\end{remark}

\section{Preliminaries}
\label{sec:preliminary}

From now on,  suppose that $F$ is a finite extension of $\Q_p$.
The archimedean case is discussed later in \cref{rem:archimedean}.
Let $h(D)$ denote the Hasse invariant of $D$,  which is a primitive $d$-th root of unity.

\subsection{Langlands correspondence for $G(F)$}
\label{sec:Langlands}

This subsection is a summary of the facts about the local Langlands correspondence and the local Jacquet-Langlands correspondence for $G(F)=\GL_m(D)$.
For details,  see \cite[Section 2]{ABPS16},  and \cite{DKV84}.
We may remove the condition that $n=md$ is even.

We write the set of regular semisimple elements in $G(F)$ (resp.\,$\GL_n(F)$) by $G_\reg(F)$ (resp.\,$\GL_{n, \reg}(F)$).
For $g\in G_\reg(F)$ and $g'\in\GL_{n,  \reg}(F)$,  we write $g\leftrightarrow g'$ if they have the same characteristic polynomial.
For a representation $\pi$ of $G(F)$ or $\GL_n(F)$,  let $\theta_\pi$ denote its character.

\begin{theorem}[\cite{DKV84}]\label{thm:DKV}
There is a unique bijection $\JL\,\colon \Irr_\disc(G(F))\to\Irr_\disc(\GL_n(F))$ such that for $\pi\in\Irr_\disc(G(F))$,  we have
    \[
    (-1)^m\theta_\pi(g)=(-1)^n\theta_{\JL(\pi)}(g')
    \]
for all $g\in G_\reg(F)$ and $g'\in\GL_{n,  \reg}(F)$ such that $g\leftrightarrow g'$.
The map $\JL$ is called the local Jacquet-Langlands correspondence.
\end{theorem}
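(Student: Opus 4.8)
The plan is to deduce the theorem from a comparison of trace formulas for $G$ and for $\GL_n$ over a global field, in the style of Jacquet--Langlands and Deligne--Kazhdan--Vign\'eras (a purely local proof via simple types is also available, but the global route is the most direct). One may assume $d\ge 2$, since for $d=1$ the map $\JL$ is the identity. First I would globalize: choose a number field $k$ with a place $v_0$ such that $k_{v_0}\cong F$, together with a central division $k$-algebra $\cD$ of degree $d$ whose local invariant at $v_0$ equals that of $D$, which splits at every archimedean place, and which ramifies at exactly one further finite place $v_1$; this is possible because the sum of local invariants vanishes, so forcing ramification at $v_0$ forces a second ramified place, whose invariant is then determined. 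Put $\cG=\GL_m(\cD)$, an inner form of $\GL_n$ over $k$; here $\cD_{v_1}$ is a division algebra of degree $d$.

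The geometric input is the matching of conjugacy classes and of orbital integrals. Regular semisimple conjugacy classes of $\cG(k_v)$ inject, via characteristic polynomials, into those of $\GL_n(k_v)$, the image being the classes whose maximal torus embeds into $\M_m(\cD_v)$; at all but finitely many places $\cD_v$ splits and the two sets of classes coincide. For $\GL_n$ and its inner forms the transfer of test functions is elementary: given $f=\prod_v f_v\in\Cc(\cG(\A_k))$ one produces $f'=\prod_v f'_v\in\Cc(\GL_n(\A_k))$, with $f'_v=f_v$ at split places, such that $\Phi(g',f'_v)=\Delta_v\,\Phi(g,f_v)$ for an explicit sign $\Delta_v$ (with $\Delta_v=1$ at split places) whenever $g\leftrightarrow g'$ are regular semisimple, the orbital integrals of $f'_v$ vanishing at regular classes not coming from $\cG$.

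I would then run the \emph{simple} trace formula for both groups, taking $f_{v_1}$ to be a supercuspidal matrix coefficient (a cusp form, so its orbital integrals vanish on non-elliptic regular classes and its trace vanishes on non-supercuspidal representations): both sides then collapse to their elliptic, resp.\ cuspidal, parts. For $f\leftrightarrow f'$ as above the elliptic geometric sides coincide once the global sign $\prod_v\Delta_v$ is checked to be $1$, hence so do the cuspidal spectral sides, $\sum_\sigma m(\sigma)\,\tr\sigma(f)=\sum_{\sigma'}m(\sigma')\,\tr\sigma'(f')$, the sums running over cuspidal automorphic representations of $\cG(\A_k)$ and of $\GL_n(\A_k)$. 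Holding generic test data away from $v_0$, strong multiplicity one for $\GL_n$ and multiplicity one for $\cG$, together with linear independence of characters, isolate the local component at $v_0$; using that every discrete series of $G(F)$ occurs as the $v_0$-component of a cuspidal automorphic representation of $\cG(\A_k)$, one obtains a map $\pi\mapsto\JL(\pi)$ from $\Irr_\disc(G(F))$ to $\Irr_\disc(\GL_n(F))$ and, via Weyl integration together with the density on the regular elliptic set of orbital integrals of supercuspidal coefficients, the character identity $(-1)^m\theta_\pi(g)=(-1)^n\theta_{\JL(\pi)}(g')$ for regular elliptic $g\leftrightarrow g'$. Its extension to all regular semisimple $g\leftrightarrow g'$ follows by descent: a non-elliptic such $g$ lies in a Levi $\prod_i\GL_{m_i}(D)$ of $G$ matching a Levi $\prod_i\GL_{n_i}(F)$ of $\GL_n$, and parabolic induction of characters is compatible on the two sides. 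Finally, $\JL$ is injective, and its characterizing property determines it uniquely, by linear independence of discrete series characters, which are nonvanishing on the regular elliptic set.

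The hard part will be, on the one hand, pinning down the global sign $\prod_v\Delta_v$ and hence the exact local normalization $(-1)^{m-n}$ in the character identity; and, on the other hand, the surjectivity of $\JL$ (so that it is a bijection), that is, realizing a given $\pi_0\in\Irr_\disc(\GL_n(F))$ as the $v_0$-component of a cuspidal automorphic representation of $\GL_n(\A_k)$ whose transfer to $\cG$ is again cuspidal — a non-vanishing statement, for which the freedom in the choice of $k$ and of the auxiliary ramified place $v_1$ is exactly what one needs.
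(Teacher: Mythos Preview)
The paper does not give its own proof of this statement: \cref{thm:DKV} is simply quoted from \cite{DKV84} with no argument supplied, so there is nothing in the paper to compare your proposal against. Your sketch is a reasonable high-level outline of the global trace-formula approach that \cite{DKV84} (and, for $\GL_2$, Jacquet--Langlands) actually follows, so in that sense it is consonant with the cited source rather than with anything the present paper does.
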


Let $\Phi(G(F))$ be the set of equivalence classes of $L$-parameters $\phi\,\colon WD_F\to\GL_n(\C)$ which is relevant to $G$.
Note that $\Phi(G(F))\subsetneq\Phi(\GL_n(F))$ if $G(F)$ is not split.
An $L$-parameter $\phi\in\Phi(G(F))$ is called \emph{discrete} if it is irreducible seen as a representation of $WD_F$ on $\C^n$.
This is equivalent to say that the centralizer of its image in $\GL_n(\C)$ is isomorphic to $\C^\times$.
Let $\Phi_\disc(G(F))$ denote the subset of discrete $L$-parameters.

The local Langlands correspondence for $\GL_n(F)$ established by \cite{HT01} and \cite{Hen00} combined with \cref{thm:DKV} provides a canonical injection
    \[
    \rec_{G(F)}\,\colon \Irr(G(F))\to \Phi(\GL_n(F))
    \]
whose image is $\Phi(G(F))$.
Note that for each $\phi\in\Phi(\GL_n(F))$,  the fiber $\rec_{G(F)}^{-1}(\phi)$ is a singleton if $\phi\in\Phi(G(F))$ and empty otherwise.
The map $\rec_{G(F)}$ has the following properties:
\begin{itemize}
\item For $\pi\in\Irr(G(F))$,  we have $\rec_{G(F)}(\pi^\vee)=\rec_{G(F)}(\pi)^\vee$,  where $^\vee$ denotes the contragredient.
\item The restriction of $\rec_{G(F)}$ to $\Irr_\disc(G(F))$ induces a bijection $\Irr_\disc(G(F))\to\Phi_\disc(G(F))$.
\item The map $\JL$ commutes with $\rec_{G(F)}$ and $\rec_{\GL_n(F)}$,  \emph{i.e.} we have $\rec_{\GL_n(F)}(\JL(\pi))=\rec_{G(F)}(\pi)$ for all $\pi\in\Irr_\disc(G(F))$.
\item For $\pi\in\Irr(G(F))$,  let $\pi_1\times\pi_2\times\cdots\times\pi_r$ be the standard module which has a surjection onto $\pi$. 
Here,  $\pi_i\in\Irr_\disc(\GL_{m_i}(D))$ and $m=m_1+\cdots+m_r$ is a composition.
Then we have $\rec_{G(F)}(\pi)=\bigoplus_{i=1}^r \rec_{\GL_{m_i}(D)}(\pi_i)$.
\end{itemize}

We briefly recall the reformulation of the local Langlands correspondence for $G(F)$ by \cite{ABPS16}.
\begin{definition}\label{def:packet}
For $\phi\in\Phi(\GL_n(F))$,  we define the \emph{$L$-packet} attached to $\phi$ as
    \[
    \Pi_\phi=\bigcup_G \rec_{G(F)}^{-1}(\phi),
    \]
where $G$ runs over the all inner forms of $\GL_n(F)$.
This is a finite subset of $\bigcup_G\Irr(G(F))$.
We say that an $L$-packet $\Pi_\phi$ is \emph{generic} if it contains a generic representation of $\GL_n(F)$.
\end{definition}

For $\phi\in\Irr_\disc(\GL_n(F))$,  the size of the $L$-packet $\Pi_\phi$ is $n$ and satisfies $\JL(\pi_1)=\JL(\pi_2)$ for all $\pi_1,  \pi_2\in\Pi_\phi$.

\begin{definition}\label{def:S}
We define the \emph{component group} $S_\phi$ as
    \[
    S_\phi=Z_{\SL_n(\C)}(\phi)/Z_{\SL_n(\C)}(\phi)^\circ,
    \]
where $Z_{\SL_n(\C)}(\phi)$ is the centralizer of the image of $\phi$ by $\SL_n(\C)$ and $Z_{\SL_n(\C)}(\phi)^\circ$ is its identity component.
\end{definition}

Note that $S_\phi$ is isomorphic to $Z_{\SL_n(\C)}/(Z_{\SL_n(\C)}\cap Z_{\SL_n(\C)}(\phi)^\circ)$.
Here,  $Z_{\SL_n(\C)}$ is the center of $\SL_n(\C)$ which is the group of $n$-th roots of unity.
In particular,  $S_\phi$ is a quotient of $Z_{\SL_n(\C)}\cong\Z/n\Z$.

\begin{theorem}[\cite{ABPS16},  Theorem 2.2]\label{thm:ABPS}
For $G=\GL_m(D)$,  let $\chi_ G$ be the character of $Z_{\SL_n(\C)}$ which sends $\zeta_n:=\exp(2\pi\sqrt{-1}/n)$ to $h(D)$.
\setlength{\leftmargini}{25pt}
\begin{itemize}
\item[(1)] An $L$-parameter $\phi\in\Phi(\GL_n(F))$ lies in $\Phi(G(F))$ if and only if $\chi_G$ factors through $S_\phi$.
    \[
    \xymatrix{ 
    Z_{\SL_n(\C)} \ar^(0.6){\chi_G}[r] \ar @{->>}[d] 
    \ar @{}[dr]|(0.33)\circlearrowleft & \C^\times \\
    S_\phi \ar_{\chi_\pi} @{-->}[ur] &   }
    \]
If this is the case,  we write the character of $S_\phi$ induced from $\chi_G$ by $\chi_\pi$,  where $\pi$ denotes the unique element of $\Irr(G(F))$ which satisfies $\rec_{G(F)}(\pi)=\phi$,  \emph{i.e.} $\Irr(G(F))\cap\Pi_\phi=\{\pi\}$.
\item[(2)] The map $\pi\mapsto \chi_\pi$ defines a bijection from $\Pi_\phi$ to the set of characters of $S_\phi$.
\end{itemize}
\end{theorem}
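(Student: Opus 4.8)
The plan is to treat separately: (i) the group-theoretic structure of $S_\phi$ and of the natural map $Z_{\SL_n(\C)}\to S_\phi$; (ii) a description of $\Phi(\GL_m(D))$ purely in terms of the decomposition of $\phi$ into irreducible constituents of $WD_F$; and (iii) the combination of (i) and (ii) together with a counting argument. For (i): write $\phi=\bigoplus_{k}\phi_k^{\oplus m_k}$ with the $\phi_k$ pairwise inequivalent irreducible representations of $WD_F$ of dimension $n_k$, so $\sum_k m_k n_k=n$. By Schur's lemma $Z_{\GL_n(\C)}(\phi)$, the centralizer in $\GL_n(\C)$ of the image of $\phi$, is isomorphic to the connected group $\prod_k\GL_{m_k}(\C)$, under which $Z_{\SL_n(\C)}(\phi)$ becomes the kernel of the homomorphism $(g_k)_k\mapsto\prod_k(\det g_k)^{n_k}$ to $\C^\times$. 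A standard computation --- via the long exact homotopy sequence, or by reduction to a computation on a torus --- then gives $S_\phi\cong\Z/g\Z$ with $g=\gcd_k(n_k)$, and shows that the inclusion $Z_{\SL_n(\C)}=\mu_n\hookrightarrow Z_{\SL_n(\C)}(\phi)$ induces the natural surjection $\mu_n\cong\Z/n\Z\twoheadrightarrow\Z/g\Z\cong S_\phi$, whose kernel is $\langle\zeta_n^g\rangle=Z_{\SL_n(\C)}\cap Z_{\SL_n(\C)}(\phi)^\circ$. This establishes the two remarks preceding the statement, and in particular shows that $\chi_G$ factors through $S_\phi$ exactly when $\chi_G(\zeta_n^g)=h(D)^g=1$, i.e.\ --- since $h(D)$ has exact order $d$ --- exactly when $d\mid g$.

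For (ii) I would show that $\phi\in\Phi(\GL_m(D))$ if and only if $d\mid n_k$ for all $k$, i.e.\ $d\mid g$. Necessity: if $\phi=\rec_{\GL_m(D)}(\pi)$, the standard-module property recorded above gives $\phi=\bigoplus_j\rec_{\GL_{m_j}(D)}(\pi_j)$ with $\pi_j\in\Irr_\disc(\GL_{m_j}(D))$ and $\sum_j m_j=m$; each summand is a discrete $L$-parameter of $\GL_{m_jd}(F)$, hence an irreducible $WD_F$-representation of dimension divisible by $d$, so by uniqueness of the decomposition into irreducibles $d\mid n_k$ for every $k$. Sufficiency: assume $d\mid n_k$ for all $k$. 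Each $\phi_k$, being a discrete $L$-parameter of $\GL_{n_k}(F)$, has $\Pi_{\phi_k}$ of order $n_k$, which is exactly the number of inner forms of $\GL_{n_k}$ over $F$ (these being classified by $\mathrm{Br}(F)[n_k]$, a group of order $n_k$); since every fibre of every $\rec$-map has at most one element, $\phi_k$ must be relevant to every inner form of $\GL_{n_k}$, in particular to $\GL_{n_k/d}(D)$. Choosing $\pi_k\in\Irr_\disc(\GL_{n_k/d}(D))$ with $\rec(\pi_k)=\phi_k$, the group $\prod_k\GL_{n_k/d}(D)^{m_k}$ is a Levi subgroup of $\GL_m(D)$ (its blocks have sizes summing to $\sum_k m_k n_k/d=m$), and the Langlands classification for $\GL_m(D)$ produces $\pi\in\Irr(\GL_m(D))$ whose standard module is obtained by parabolic induction from the $\pi_k$, each taken with multiplicity $m_k$, in a suitable order; by the standard-module property, $\rec_{\GL_m(D)}(\pi)=\bigoplus_k\phi_k^{\oplus m_k}=\phi$.

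For (iii): combining (i) and (ii), $\phi\in\Phi(\GL_m(D))$ if and only if $\chi_G$ factors through $S_\phi$, and when this holds $\chi_G$ descends to a character $\chi_\pi$ of $S_\phi$, where $\pi$ is the unique element of $\rec_{G(F)}^{-1}(\phi)$; this is (1). For (2), suppose $\Pi_\phi\neq\emptyset$, so that $S_\phi\cong\Z/g\Z$. The inner forms of $\GL_n$ to which $\phi$ is relevant are precisely the $\GL_{n/d}(D)$ with $d\mid g$; via $D\mapsto h(D)=\exp(2\pi\sqrt{-1}\,\mathrm{inv}(D))$ and the isomorphism $\mathrm{Br}(F)\cong\Q/\Z$, these correspond bijectively to the group $\mu_g$ of $g$-th roots of unity, so $\Pi_\phi$ has exactly $g=|S_\phi|=|\widehat{S_\phi}|$ elements, one coming from each such inner form. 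Under $\pi\mapsto\chi_\pi$, the member attached to $\GL_{n/d}(D)$ is sent to the character of $S_\phi$ taking the image of $\zeta_n$ to $h(D)$; as $h(D)$ ranges over $\mu_g$ this runs through all characters of $S_\phi\cong\Z/g\Z$, each exactly once, which gives the bijection in (2).

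The step I expect to be the main obstacle is the sufficiency direction in (ii): the input that every discrete $L$-parameter of $\GL_\ell(F)$ with $d\mid\ell$ is relevant to the inner form $\GL_{\ell/d}(D)$ encodes the substantive content of the Jacquet--Langlands correspondence across inner forms (here extracted from the fact, recorded before the statement, that $\Pi_\psi$ has order $\ell$ for discrete $\psi$). A secondary technical point is the careful use of the Langlands classification for $\GL_m(D)$ --- ordering the blocks of the standard module and allowing for repeated or non-tempered constituents --- in order to manufacture a genuine representation carrying the prescribed parameter.
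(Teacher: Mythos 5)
The paper itself gives no proof of this statement---it is imported verbatim from [ABPS16, Theorem 2.2]---so there is nothing internal to compare against; your reconstruction is correct and follows the same route as the cited source: compute $S_\phi\cong\Z/\gcd_k(n_k)\Z$ from the Schur-lemma description of the centralizer, characterize membership in $\Phi(\GL_m(D))$ by $d\mid n_k$ for all $k$ via the standard-module and Jacquet--Langlands properties of $\rec$, and match the $g=\sum_{d'\mid g}\varphi(d')$ relevant inner forms with the $g$ characters of $S_\phi$ through $\zeta_n\mapsto h(D')$. The only inputs you take on faith---that $|\Pi_\psi|=\ell$ for discrete $\psi\in\Phi_\disc(\GL_\ell(F))$ and that each fiber of each $\rec$-map is a singleton on relevant parameters---are exactly the facts the paper records before the statement, so the argument is complete as written.
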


\subsection{Linear periods}

Let $(G', H')$ be a symmetric pair,  \emph{i.e.} $G'$ is a reductive algebraic group over $F$ and $H'$ is the group of fixed points of an involution on $G'$.
We introduce the notion of tempered symmetric pairs,  which is called \emph{strongly discrete} in \cite{GO16}.
Following \cite[Section 2.7]{BP18},  we use the name of tempered symmetric pairs.

\begin{definition}
Let $A_{G'}$ be the maximal $F$-split torus in the center of $G'$ and set $A_{G'}^+=A_{G'}\cap H'$.
A unitary irreducible representation $\pi$ of $G'(F)$ is called \emph{$H'$-integrable} if all matrix coefficients of $\pi$ are integrable over $H/A_{G'}^+$.
We say that $(G',  H')$ is \emph{tempered} if all irreducible square integrable representations of $G'(F)$ are $H'$-integrable.
\end{definition}

\begin{lemma}
The symmetric pair $(G,  H)$ is tempered.
\end{lemma}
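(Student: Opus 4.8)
The plan is to reduce the statement to an estimate for Harish-Chandra's $\Xi$-function restricted to $H(F)$, and then to the convergence of a series over the dominant chamber of $H$ which will follow from a single clean algebraic identity between modulus characters. By definition we must show that for every irreducible square-integrable representation $\pi$ of $G(F)$ and every matrix coefficient $f$ of $\pi$, the integral $\int_{H(F)/A_G^+(F)}|f(h)|\,dh$ is finite. Since $G=\GL_m(D)$ has center the split torus $F^\times$, we have $A_G=Z_G=F^\times$, and as $F^\times$ lies in the center $E^\times$ of $H\cong\GL_r(C)$ we get $A_G^+=F^\times$. By a theorem of Harish-Chandra, the matrix coefficients of a square-integrable representation belong to the Harish-Chandra--Schwartz space $\mathcal{C}(A_G(F)\backslash G(F))$; in particular, for every $N$ one has $|f(g)|\le C_{N}\,\Xi^{G}(g)\,(1+\sigma_{G}(g))^{-N}$, where $\sigma_G$ is the standard log-norm on $G(F)$. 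Hence it suffices to prove that $\int_{H(F)/F^\times}\Xi^{G}(h)\,(1+\sigma_{G}(h))^{-N}\,dh<\infty$ for $N$ large.

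Next I would fix compatible structures. Choose a maximal $F$-split torus $A_{0,H}$ of $H$ inside a maximal $F$-split torus $A_{0,G}$ of $G$, and a maximal compact subgroup $K_G$ of $G(F)$ such that $K_H:=K_G\cap H(F)$ is maximal compact in $H(F)$; after conjugating the embedding $E\hookrightarrow\M_m(D)$ this can be arranged. Fix minimal parabolic subgroups $P_{0,G}\supseteq\Cent_G(A_{0,G})$ and $P_{0,H}\supseteq\Cent_H(A_{0,H})$ whose dominant chambers are compatible, i.e.\ the dominant chamber of $A_{0,H}$ lies in that of $A_{0,G}$; one checks from the block description of $H$ that this is possible. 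Since $\Xi^{G}$ and $\sigma_{G}$ are bi-$K_H$-invariant, the Cartan decomposition $H(F)=K_H\Lambda_H^{+}K_H$, with $\Lambda_H^{+}$ the monoid of dominant elements of $A_{0,H}(F)$, reduces the required finiteness to convergence of
\[
\sum_{\lambda\in\Lambda_H^{+}/F^\times}\Xi^{G}(\lambda)\,(1+\sigma_{G}(\lambda))^{-N}\,\vol(K_H\lambda K_H).
\]

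I would then combine three ingredients: (i) the Cartan-decomposition volume bound $\vol(K_H\lambda K_H)\le C\,\delta_{P_{0,H}}(\lambda)^{-1}$ on the dominant chamber of $H$, valid for the inner form $\GL_r(C)$; (ii) Harish-Chandra's bound $\Xi^{G}(\lambda)\le C\,\delta_{P_{0,G}}(\lambda)^{1/2}\,(1+\sigma_{G}(\lambda))^{b}$ on the dominant chamber of $G$, which applies to $\lambda\in\Lambda_H^{+}$ by the compatibility of chambers; and (iii) the identity
\[
\delta_{P_{0,G}}^{1/2}\big|_{A_{0,H}}=\delta_{P_{0,H}}
\]
(equivalently, $\rho_{P_{0,G}}$ restricts to $\rho_{P_{0,H}}$ on $A_{0,H}$). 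Granting (iii), the $\lambda$-summand is bounded by $C\,(1+\sigma_{G}(\lambda))^{b-N}$; since $\Lambda_H^{+}/F^\times$ is the set of lattice points in a rational polyhedral cone of dimension $r-1$, the series converges as soon as $N>b+r-1$, which proves the lemma.

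The only step that genuinely uses the particular shape of $H$, as opposed to soft harmonic analysis, is the identity (iii); the Harish-Chandra--Schwartz property of square-integrable matrix coefficients, the bound on $\Xi^G$, and the Cartan volume estimate for inner forms of $\GL$ are all standard and I would simply cite them. To verify (iii) I would treat the two cases in the definition of $C$. If $d$ is even then $E\hookrightarrow D$, $C=\Cent_D(E)$ and $r=m$, so $A_{0,H}=A_{0,G}=(F^\times)^m$ and, comparing root-space dimensions ($\dim_F D=d^{2}$ versus $\dim_F C=d^{2}/2$), both sides send $\diag(s_1,\dots,s_m)$ to $\prod_{i<j}|s_i/s_j|_F^{\,d^{2}/2}$. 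If $d$ is odd then $C=D\otimes_F E$ and $r=m/2$, the torus $A_{0,H}=(F^\times)^{m/2}$ embeds into $A_{0,G}=(F^\times)^{m}$ by $(s_1,\dots,s_{m/2})\mapsto(s_1,s_1,\dots,s_{m/2},s_{m/2})$, and a short count of root-space dimensions together with the relation $|x|_E=|x|_F^{2}$ for $x\in F^\times$ shows that both sides send $\diag(s_1,\dots,s_{m/2})$ to $\prod_{k}|s_k|_F^{\,d^{2}(m+2-4k)}$. Conceptually, (iii) says that $\Xi^{G}$ restricted to $H$ decays at exactly the critical rate $\delta_{P_{0,H}}^{-1/2}$ — which is what makes $(G,H)$ tempered, and is also why that decay is not integrable on its own, so that the extra rapid decay of square-integrable matrix coefficients is genuinely needed. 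This last identity is therefore the place where I expect all the work to be, though it is only a direct computation.
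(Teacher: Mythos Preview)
Your argument is correct. Both your proof and the paper's establish the same key fact, namely that $\rho_{P_{0,G}}$ restricts to $\rho_{P_{0,H}}$ on $A_{0,H}$ (your identity (iii)), which in the language of \cite{GO16} is exactly the vanishing $\rho_{G/H}=0$. The difference is one of packaging: the paper invokes the criterion of Gurevich--Offen \cite[Corollary~5.4]{GO16} as a black box, reducing the temperedness to checking that $\rho_{G/H}$ is relatively weakly positive, and then verifies $\rho_{G/H}=0$ by computing $m_{\theta,\alpha}=2\dim_F L_\alpha^H-\dim_F L_\alpha^G=0$ when $d$ is even and observing $\theta(\alpha)\neq\alpha$ when $d$ is odd. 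You instead unwind what that criterion actually proves---the integrability of $\Xi^G(h)(1+\sigma_G(h))^{-N}$ over $H(F)/F^\times$---and carry out the Cartan-decomposition argument directly. Your route is more self-contained and makes transparent exactly where the square-integrability hypothesis enters (the polynomial factor $(1+\sigma_G)^{-N}$ is genuinely needed since $\delta_{P_{0,G}}^{1/2}\delta_{P_{0,H}}^{-1}=1$ is on the boundary of integrability); the paper's route is shorter because the harmonic analysis has been outsourced. The root-space dimension counts underlying your verification of (iii) are identical to the paper's: $\dim_F C=d^2/2$ for $d$ even and $\dim_F C=2d^2$ for $d$ odd.
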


\begin{proof}
We freely use the notation and the terminology from \cite{GO16}.

First suppose that $d$ is odd.
The case of $d=1$ is \cite[Corollary 5.12]{GO16} and the general case follows from a similar argument.
Take $\delta\in E\setminus F$ such that $\delta^2\in F$ and the embedding of $E$ into $\M_m(D)$ as 
    \[
    a+b\delta\mapsto \diag\left(
        \begin{pmatrix}
        a & \delta^2 b \\
        b & a
        \end{pmatrix},  \ldots,  
        \begin{pmatrix}
        a & \delta^2 b \\
        b & a
        \end{pmatrix}    
        \right)
    \]
for $a,  b\in F$.
Note that $m$ is even since we assumed $n=md$ is even and $d$ is odd,  and hence this embedding is well-defined.
Let $t$ be the image of $\delta\in E^\times$ by this embedding and we define the involution $\theta$ on $G$ by $\theta(g)=tgt^{-1}$.
Then $H$ is the subgroup of the $\theta$-fixed points.
Let  $A_0=\{\diag(a_1,  \ldots,  a_m) \mid a_i\in F^\times\}$ be a maximal $F$-split torus in $G$.
Then $A_0^+:=A_0\cap H=\{\diag(t_1\mathbf{1}_2,  \ldots,  t_{m/2}\mathbf{1}_2) \mid t_i\in F^\times\}$ is a maximal $F$-split torus in $H$.
Let $M_1$ be the centralizer of $A_0^+$ in $G$,  which is a minimal $\theta$-stable Levi subgroup of $G$.

Since we have $\Sigma^{G/H}=\Sigma^H$ and $W^{G/H}=W^H$,  it follows from \cite[Corollary 5.4]{GO16} that we only need to show that $\rho_{G/H}$ is $M_1$-relatively weakly positive (see \cite[Definition 3.10]{GO16}).
For all $\alpha\in\Sigma^G$ such that $\alpha|_{A_0^+}\neq0$,  we have $\theta(\alpha)\neq\alpha$.
Hence we see $\rho_{G/H}=0$ by \cite[Lemma 4.6,  Proposition 4.7]{GO16}.
This proves that $(G,  H)$ is tempered.

Next we suppose that $d$ is even.
Take an embedding of $E$ into $D$ and let $E\hookrightarrow\M_m(D)$ be the embedding induced from $E\hookrightarrow D$.
Let $t$ be the image of $\delta$ by this embedding and we define the involution $\theta$ on $G$ by $\theta(g)=tgt^{-1}$.
Then $H$ is the subgroup of the $\theta$-fixed points.
Let  $A_0=\{\diag(a_1,  \ldots,  a_m) \mid a_i\in F^\times\}$ be a maximal $F$-split torus in $G$.
Then $A_0$ is also a maximal $F$-split torus in $H$.
Let $M_1$ be the centralizer of $A_0$ in $G$.

Since $\Sigma^{G/H}=\Sigma^G=\Sigma^H$ and $W^{G/H}=W^G=W^H$,  it follows from \cite[Corollary 5.4]{GO16} that we only need to show that $\rho_{G/H}$ is $M_1$-relatively weakly positive.
For each $\alpha\in\Sigma^{G/H}$,  let $L_\alpha^G$ (resp.\,$L_\alpha^H$) be the weight space of $\alpha$ in $\fg(F)$ (resp.\,$\fh(F)$).
Then $\dim_F(L_\alpha^G)=\dim_F(D)=d^2$ and $\dim_F(L_\alpha^H)=\dim_F(C)=\frac12d^2$,  where $C=\Cent_D(E)$ is the centralizer of $E$ in $D$.
By \cite[Lemma 4.6]{GO16} we have $m_{\theta,  \alpha}=2\dim_F(L_\alpha^H)-\dim_F(L_\alpha^G)=0$.
Hence we see that $\rho_{G/H}=0$ by \cite[Lemma 4.6,  Proposition 4.7]{GO16} and $(G,  H)$ is tempered.
\end{proof}

For $\pi\in\Irr(G(F))$,  note that $m(\pi,  \chi)=0$ unless $\omega_\pi=\chi^{\frac{n}{2}}|_{F^\times}$.
When $\chi=\mathbf{1}$ is the trivial character,  the multiplicity one for $(H,  \mathbf{1})$-periods is known by Guo \cite{Guo97} and Broussous and Matringe \cite{BM21}.

\begin{theorem}\label{thm:mult_one}
For $\pi\in\Irr(G(F))$,  we have $m(\pi,  \mathbf{1})\leq 1$.
\end{theorem}

Let 
    \[
    S(F)=\left\{s(g,  X):=
        \begin{pmatrix}
        g & gX \\
        \bzero & g
        \end{pmatrix}
    \,\middle|\,  g\in\GL_{n/2}(F),  \ X\in\M_{n/2}(F) \right\}
    \]
be the Shalika subgroup of $\GL_n(F)$.
We define the character $\Psi_{\psi,  \chi}$ of $S(F)$ by
    \[
    \Psi_{\psi,  \chi}(s(g,  X))=\chi(\det(g))\psi(\tr(X)),  \qquad g\in\GL_{n/2}(F) \ X\in\M_{n/2}(F),
    \]
where $\psi\,\colon F\to\C^\times$ is a fixed non-tirivial additive character of $F$.
We say that a representation $\pi$ of $\GL_n(F)$ has \emph{Shalika periods} with respect to $\chi|_{F^\times}$ if $\Hom_{S(F)}(\pi,  \Psi_{\psi,  \chi})\neq0$.
The multiplicity one for Shalika periods,  \emph{i.e.} 
    \[
    \dim\Hom_{S(F)}(\pi,  \Psi_{\psi,  \chi})\leq 1
    \]
 for $\pi\in\Irr(\GL_n(F))$ is known by Chen and Sun \cite[Theorem A]{CS20}.
 We will assume the following characterization of the Shalika periods.

\begin{hypothesis}\label{hyp:2}
Let $\pi\in\Irr_\disc(\GL_n(F))$ and set $\phi=\rec_{\GL_n(F)}(\pi)$.
Then $\pi$ has Shalika periods with respect to $\chi|_{F^\times}$ if and only if $\phi$ takes values in $\GSp_n(\C)$ with similitude factor $\chi|_{F^\times}$.
\end{hypothesis}
When $\chi=\mathbf{1}$ is the trivial character,  this is known. 
See \cite[Corollary 3.15,  Theorem 5.14]{Mat15}.

\subsection{Quasi-characters and strongly cuspidal functions}
\label{sec:quasi-characters}

We write the set of semisimple elements in $G(F)$ by $G_\semi(F)$.
For $x\in G_\semi(F)$,  let $G_x$ be the neutral component of the centralizer of $x$ in $G$ and $\fg_x(F)$ its Lie algebra.
Let $\fg_{x,  \reg}(F)$ be the set of regular semisimple elements in $\fg_x(F)$.

There is a notion of a \emph{good neighborhood} $\omega\subset \fg_x(F)$ of $0$.
It is an open neighborhood of $0$,  invariant under $G_x(F)$-conjugate and the exponential map is defined on it.
We refer the reader to \cite[Section 3.1]{Wal10} for the precise definition.

We fix a $G_x(F)$-invariant non-degenerate symmetric bilinear form $\langle \ ,  \ \rangle$ on $\fg_x(F)$.
We also fix a non-trivial additive character $\psi$ of $F$.
For $f\in\Cc(\fg_x(F))$,  its Fourier transform $\hat{f}\in\Cc(\fg_x(F))$ is defined as 
    \[
    \hat{f}(X)=\int_{\fg_x(F)} f(Y) \psi(\langle X,  Y\rangle) \rd Y,
    \]
where $\rd Y$ is the Haar measure on $\fg_x(F)$ normalized so that $\hat{\hat{f}}(X)=f(-X)$.

Let $\Nil(\fg_x(F))$ denote the set of nilpotent orbits in $\fg_x(F)$.
For $\cO\in\Nil(\fg_x(F))$,  there is a locally integrable function $\hat{j}(\cO,  X)$ on $\fg_x(F)$ such that
    \[
    J_\cO(\hat{f})=\int_{\fg_x(F)} f(X)\hat{j}(\cO,  X) \rd X
    \]
for all $f\in\cC(\fg_x(F))$.
Here,  $J_\cO(\hat{f})$ denotes the nilpotent orbital integral of $\hat{f}$ over $\cO$.

\begin{definition}
Let $\theta$ be a function defined almost everywhere on $G(F)$ which is invariant under $G(F)$-conjugation and its restriction defines a smooth function on $G_\reg(F)$.
The function $\theta$ is called a \emph{quasi-character} if for each $x\in G_\semi(F)$,  there is a good neighborhood $\omega_x\subset\fg_x(F)$ of $0$ and for each $\cO\in\Nil(\fg_x(F))$,  there is a constant $c_{\theta,  \cO}(x)\in\C$ such that
    \[
    \theta(x\exp(X))=\sum_{\cO\in\Nil(\fg_x(F))} c_{\theta,  \cO}(x)\hat{j}(\cO,  X),  \qquad X\in\omega_x\cap\fg_{x, \reg}(F).
    \]
\end{definition}

Let $\omega$ be a unitary character of $F^\times\cong Z_G(F)$.
We say that a quasi-character $\theta$ on $G(F)$ has central character $\omega$ if $\theta(zg)=\omega(z)\theta(g)$ for all $z\in Z_G(F)$ and $g\in G(F)$.
We denote by $\cC(G(F),  \omega)$ the space of Schwartz-Harish-Chadra functions $f\,\colon G(F)\to\C$ satisfying $f(zg)=\omega(z)f(g)$ for all $z\in Z_G(F)$ and $g\in G(F)$.
We refer to \cite[Section 2.3]{BP18} for details.

\begin{definition}
A function $f\in\cC(G(F),  \omega)$ is said to be \emph{strongly cuspidal} if we have
    \[
    \int_{U(F)} f(xu) \rd u=0
    \]
for all proper parabolic subgroup $P=MU$ of $G$ and all $x\in M(F)$.
We say that $f$ is \emph{cuspidal} if the above equality holds for all proper parabolic subgroup $P=MU$ of $G$ and all $x\in G(F)$.
\end{definition}

Let $x\in G_\reg(F)$ and set $M(x)=Z_G(A_{G_x})$.
Then $M(x)$ is a Levi subgroup of $G$,  which is minimal among those containing $x$.
For a strongly cuspidal function $f\in\cC(G(F),  \omega)$,  we set
    \[
    \theta_f(x)=(-1)^{\dim\left(\fa_{M(x)}^G\right)} D^G(x)^{-\frac12} J_{M(x)}(x,  f),  \qquad x\in G_\reg(F),
    \]
where $J_{M(x)}(x,  f)$ is the weighted orbital integral of $f$.
We refer the reader to \cite[Section 5.3]{Wal10} for the precise definition.
Then $\theta_f$ is a quasi-character by \cite[Corollary 5.9]{Wal10}.

\section{A reformulation of the conjecture}
\label{sec:reform}

\subsection{Statement}
\label{subsec:statement}

Let $n$ be an even positive integer.
For $\phi\in\Phi(\GL_n(F))$,  we set
    \[
    m(\Pi_\phi,  \chi) = \sum_{\pi\in\Pi_\phi}m(\pi,  \chi).
    \]

Suppose that $\phi$ takes values in $\GSp_n(\C)$ with similitude factor $\chi|_{F^\times}$.
Then it can be written in the form
    \begin{equation}\label{eq:L-param}
    \phi=\bigoplus_{i\in I_\phi}\phi_i \oplus 
    \bigoplus_{j\in J_\phi}(\phi_j \oplus (\phi_j^\vee\cdot\chi|_{F^\times})),  
    \end{equation}
with some finite index sets $I_\phi,  J_\phi$ and $\phi_k\in\Phi_\disc(\GL_{n_k}(F))$ for $k\in I_\phi\cup J_\phi$  which satisfy
\begin{itemize}
\item $n=\sum_{i\in I_\phi}n_i+2\sum_{j\in J_\phi}n_j$
\item for each $i\in I_\phi$,  $n_i$ is even and $\phi_i$ takes values in $\GSp_{n_i}(\C)$ with similitude factor $\chi|_{F^\times}$,
\item  for any $i,  i'\in I_\phi$,  $\phi_i\not\cong\phi_{i'}$ if $i\neq i'$.
\end{itemize}

Note that this decomposition is not unique but the set $\{\phi_i\}_{i\in I_\phi}$ is uniquely determined up to isomorphism.
We fix one such expression for each $L$-parameter which takes values in $\GSp_n(\C)$ with similitude factor $\chi|_{F^\times}$.
Then \cref{conj:PTB} can be rephrased as follows.

\begin{conjecture}\label{conj:reform}
Take an $L$-parameter $\phi\in\Phi(\GL_n(F))$.
\setlength{\leftmargini}{25pt}
\begin{itemize}
\item[(1)] If $m(\Pi_\phi,  \chi)\neq0$,  then $\phi$ takes values in $\GSp_n(\C)$ with similitude factor $\chi|_{F^\times}$.
\item[(2)] Suppose that $\phi$ takes values in $\GSp_n(\C)$ with similitude factor $\chi|_{F^\times}$ and write it as \eqref{eq:L-param}.
We also assume that $\Pi_\phi$ is generic.
Then for $\pi\in\Pi_\phi$,  $m(\pi,  \chi)\neq0$ if and only if 
    \begin{equation}\label{eq:reform}
    \varepsilon\left(\phi_i\otimes\Ind_{W_E}^{W_F}(\chi^{-1})\right)
    =\left(\chi_\pi(\zeta_n)\chi\eta_{E/F}(-1)\right)^{\frac{n_i}{2}}
    \end{equation}
for each $i\in I_\phi$.
\end{itemize}
\end{conjecture}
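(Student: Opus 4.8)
The plan is to show that \cref{conj:reform} is equivalent to \cref{conj:PTB}; the content is a bookkeeping of local $\varepsilon$-factors along the decomposition \eqref{eq:L-param}, together with the identity $\chi_\pi(\zeta_n)^{n/2}=(-1)^m$. Part (1) of the two conjectures is trivially equivalent: since $\Pi_\phi=\bigcup_G\rec_{G(F)}^{-1}(\phi)$, we have $m(\Pi_\phi,\chi)\neq0$ precisely when $m(\pi,\chi)\neq0$ for some $\pi$ with $\rec_{G(F)}(\pi)=\phi$, and for such a $\pi$ the parameter $\phi$ equals $\phi_\pi$, so \cref{conj:reform}(1) and \cref{conj:PTB}(1) say the same thing. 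The real work is part (2).

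First I would record the $\varepsilon$-factor identities in the standard normalization. By the projection formula and inductivity, $\varepsilon\bigl(\phi\otimes\Ind_{W_E}^{W_F}(\chi^{-1})\bigr)=\lambda_{E/F}(\psi)^{n}\,\varepsilon\bigl(\bc_{E/F}(\phi)\otimes\chi^{-1}\bigr)=\eta_{E/F}(-1)^{n/2}\,\varepsilon\bigl(\bc_{E/F}(\phi)\otimes\chi^{-1}\bigr)$, using $\lambda_{E/F}(\psi)^2=\eta_{E/F}(-1)$; this is what passes between the form of the root number in \eqref{eq:0} and the one in \cref{conj:PTB}(2). Next comes the key computation along \eqref{eq:L-param}. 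From the isomorphism $\Ind_{W_E}^{W_F}(\chi^{-1})\otimes\chi|_{F^\times}\cong\Ind_{W_E}^{W_F}(\chi)=\Ind_{W_E}^{W_F}(\chi^{-1})^\vee$ (the middle term is induced from the $\Gal(E/F)$-conjugate of $\chi$, to which induction is insensitive) and from $\phi_k^\vee\cong\phi_k\otimes(\chi|_{F^\times})^{-1}$ for a $\GSp$-valued $\phi_k$ with similitude $\chi|_{F^\times}$, I would check two things. For $i\in I_\phi$, the factor $V_i:=\phi_i\otimes\Ind_{W_E}^{W_F}(\chi^{-1})$ is self-dual with $\det V_i=\eta_{E/F}^{n_i}=\mathbf{1}$ (here $n_i$ even is used), so $\varepsilon(V_i)^2=(\det V_i)(-1)=1$ and $\varepsilon(V_i)=\pm1$. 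For $j\in J_\phi$, one has $(\phi_j^\vee\cdot\chi|_{F^\times})\otimes\Ind_{W_E}^{W_F}(\chi^{-1})\cong V_j^\vee$ with $V_j:=\phi_j\otimes\Ind_{W_E}^{W_F}(\chi^{-1})$, so that summand contributes $\varepsilon(V_j)\varepsilon(V_j^\vee)=(\det V_j)(-1)=\bigl(\chi(-1)\eta_{E/F}(-1)\bigr)^{n_j}$. Multiplying over \eqref{eq:L-param},
\[
\varepsilon\bigl(\phi\otimes\Ind_{W_E}^{W_F}(\chi^{-1})\bigr)=\Bigl(\prod_{i\in I_\phi}\varepsilon(V_i)\Bigr)\cdot\bigl(\chi(-1)\eta_{E/F}(-1)\bigr)^{\sum_{j\in J_\phi}n_j}.
\]

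Then I would feed in the component group. Evaluating the character $\chi_G$ at $-I=\zeta_n^{n/2}\in Z_{\SL_n(\C)}$ gives $\chi_\pi(\zeta_n)^{n/2}=h(D)^{n/2}=(-1)^m$, the last step being an elementary computation from the facts that $h(D)$ is a primitive $d$-th root of unity and $n=md$ is even; moreover the structure of $S_\phi$ (a quotient of $Z_{\SL_n(\C)}\cong\Z/n\Z$, pinned down by the constituent dimensions) shows $\chi_\pi(\zeta_n)$ has order dividing $n_i$ for every $i\in I_\phi$ and $n_j$ for every $j\in J_\phi$. Writing $\zeta=\chi_\pi(\zeta_n)$ and $c=\chi(-1)\eta_{E/F}(-1)$, and using $\sum_{i\in I_\phi}n_i/2=n/2-\sum_{j\in J_\phi}n_j$ with $\zeta^{n_j}=1$, one gets: if \eqref{eq:reform} holds for every $i$, then the displayed product equals $\prod_i(\zeta c)^{n_i/2}\cdot c^{\sum_j n_j}=\zeta^{\sum_i n_i/2}c^{n/2}=\zeta^{n/2}c^{n/2}=(-1)^m c^{n/2}$, which is exactly the value required in \cref{conj:PTB}(2). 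This proves the ``only if'' direction of the implication \cref{conj:reform}$\,\Rightarrow\,$\cref{conj:PTB}, and also shows that for discrete $\pi$ — where $J_\phi=\emptyset$ and $I_\phi$ is a single index with $n_i=n$ — the condition \eqref{eq:reform} is literally the equality \eqref{eq:0}, i.e.\ the converse part of \cref{conj:PTB} for essentially square integrable representations. In the other direction, \cref{conj:PTB} gives \cref{conj:reform}(1) at once, and for discrete $\pi$ it gives \cref{conj:reform}(2).

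The main obstacle is the case of a reducible (non-discrete) generic $\phi$: \cref{conj:PTB} only asserts the converse for $\pi\in\Irr_\disc(G(F))$, whereas \cref{conj:reform}(2) asserts an ``only if'' for every member of a generic $\Pi_\phi$, and a single root number identity does not imply the conjunction over $i\in I_\phi$ in \eqref{eq:reform}. Bridging this requires the (known, in the relevant cases) reduction of $(H,\chi)$-distinction of a generic representation to its essentially square integrable constituents, together with the characterization of Shalika periods in \cref{hyp:2}: a generic $\pi_1\times\cdots\times\pi_r$ is $(H,\chi)$-distinguished exactly when, after regrouping its constituents into the shape \eqref{eq:L-param}, each $\phi_i$ ($i\in I_\phi$) satisfies the $\varepsilon$-factor condition attached to its own group while the paired constituents $\phi_j,\phi_j^\vee\chi|_{F^\times}$ impose no constraint; this is precisely the per-$i$ conjunction of \eqref{eq:reform}. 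Apart from this, the only remaining points needing care are the normalization of $\varepsilon$-factors and of the Langlands constant $\lambda_{E/F}(\psi)$, and the self-duality and determinant bookkeeping above that turns each $\varepsilon\bigl(\phi_i\otimes\Ind_{W_E}^{W_F}(\chi^{-1})\bigr)$ into a sign.
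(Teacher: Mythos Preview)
Your proposal is correct and aligns with the paper's approach in \S\ref{subsec:comparison}: both reduce \cref{conj:reform} to \cref{conj:PTB} for essentially square integrable representations via the Mackey-theoretic decomposition (what the paper states as \cref{prop:reduction_disc}) together with the identity $\chi_\pi(\zeta_n)^{n_i/2}=(-1)^{m_i}$ on each discrete constituent. The paper packages the argument slightly differently---it first rewrites \cref{conj:reform}(2) as the combinatorial statement of \cref{lem:comparison} (existence of an involutive permutation $\varsigma$ whose fixed points carry the per-$i$ epsilon condition), then matches this to the Mackey output---whereas you begin with the multiplicativity of $\varepsilon$ along \eqref{eq:L-param} to deduce the single equation of \cref{conj:PTB}(2) from the conjunction \eqref{eq:reform}, and only afterwards invoke Mackey to go back. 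Your extra direction (\cref{conj:reform} $\Rightarrow$ \cref{conj:PTB}) is a correct bonus the paper does not spell out; the paper contents itself with \cref{cor:reduction_disc}, i.e.\ \cref{conj:PTB} for discrete series $\Rightarrow$ \cref{conj:reform}. One small point to tighten: you assert the per-$i$ match ``precisely'' in your last paragraph, but the needed input is the refined identity $\chi_\pi(\zeta_n)^{n_i/2}=(-1)^{m_i}$ (not just the global $\chi_\pi(\zeta_n)^{n/2}=(-1)^m$), which the paper checks by the case split on the parity of $k$ in $h(D)=\zeta_d^k$; your argument has all the ingredients for this but does not quite state it.
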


\begin{remark}Keep the notation of \cref{conj:reform}.
\setlength{\leftmargini}{25pt}
\begin{itemize}
\item[(1)] \cref{conj:reform} is a refinement of the original one because it provides a necessary and sufficient condition while \cref{conj:PTB} concerns only one direction.

\item[(2)] Applying \cite[(30.4.2),  (30.4.3)]{BH06}, the equation \eqref{eq:reform} becomes
    \[
    \varepsilon(\bc_{E/F}(\phi_i)\otimes\chi^{-1})
    =\left(\chi_\pi(\zeta_n)\chi(-1)\right)^{\frac{n_i}{2}},
    \]
where $\bc_{E/F}(\phi_i)$ denotes the restriction of $\phi_i$ to $WD_E$,  which belongs to $\Phi(\GL_{n_i}(E))$.
Note also that $\phi$ takes values in $\GSp_n(\C)$ with similitude factor $\chi|_{F^\times}$ if and only if $\bc_{E/F}(\phi)$ takes values in $\GSp_n(\C)$ with similitude factor $\chi\circ\Nm_{E/F}$.
\end{itemize}
\end{remark}

\subsection{Comparison with the original conjecture}
\label{subsec:comparison}

We examine the relation between \cref{conj:PTB} and \cref{conj:reform}.
Take an $L$-parameter $\phi\in\Phi(\GL_n(F))$.
Suppose that $\phi$ takes values in $\GSp_n(\C)$ with similitude factor $\chi|_{F^\times}$ and write it in the form \eqref{eq:L-param}.
We also assume that $\Pi_\phi$ is generic.

Let $\pi\in\Pi_\phi$ be a representation of $\GL_m(D)$,  where $D$ is a central division algebra over $F$ of dimension $\dim_F(D)=d^2$.
Note that $m$ and $d$ satisfy $n=md$.
Since $\Pi_\phi$ is generic,  $\pi$ is isomorphic to a standard module $\widetilde{\pi}=\pi_1\times\pi_2\times\cdots\times\pi_r$ with some composition $m=m_1+\cdots+m_r$ and $\pi_i\in\Irr_\disc(\GL_{m_i}(D))$.
We set $n_i=m_id$ and $\phi_i=\rec_{\GL_{m_i}(D)}(\pi_i)\in\Phi(\GL_{n_i}(F))$ for each $i$.

\begin{lemma}\label{lem:comparison}
Keep the above notation.
\cref{conj:reform} (2) is equivalent to the following assertion.
We have $m(\pi,  \chi)\neq0$ if and only if there is an involutive permutation $\varsigma\in\fS_r$ such that 
\begin{itemize}
\item $m_{\varsigma(i)}=m_i$ for each $i$.
\item if $\varsigma(i)=i$,  then $n_i$ is even,  $\phi_i$ takes values in $\GSp_{n_i}(\C)$ with similitude factor $\chi|_{F^\times}$ and 
        $\varepsilon\left(\phi_i\otimes\Ind_{W_E}^{W_F}(\chi^{-1})\right)=(-1)^{m_i}\chi\eta_{E/F}(-1)^\frac{n_i}{2}$.
\item if $\varsigma(i)\neq i$,  then $\pi_{\varsigma(i)}\cong \pi_i^\vee\cdot\chi_{\GL_{m_i}(D)}$.
\end{itemize}
\end{lemma}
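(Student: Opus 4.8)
The plan is to translate the statement of \cref{conj:reform}(2) about the packet $\Pi_\phi$ and its component-group characters into the language of the standard module $\widetilde\pi=\pi_1\times\cdots\times\pi_r$. First I would unwind the relation between the fixed decomposition \eqref{eq:L-param} of $\phi$ and the decomposition $\phi=\bigoplus_{i=1}^r\rec_{\GL_{m_i}(D)}(\pi_i)=\bigoplus_i\phi_i$ coming from the standard module. An involutive $\varsigma\in\fS_r$ with the stated three properties is precisely the data of a partition of $\{1,\dots,r\}$ into $\varsigma$-fixed points (contributing the summands $\phi_i$ with $\phi_i$ symplectic of similitude $\chi|_{F^\times}$, i.e. the ``$I_\phi$-type'' pieces) and $\varsigma$-orbits of size two (contributing pairs $\phi_i\oplus\phi_i^\vee\cdot\chi|_{F^\times}$, i.e. the ``$J_\phi$-type'' pieces). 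So the existence of such a $\varsigma$ is exactly the assertion that the multiset $\{\phi_1,\dots,\phi_r\}$ (with multiplicities) can be grouped into a decomposition of the form \eqref{eq:L-param} in which, moreover, \emph{every} fixed $\phi_i$ of symplectic type satisfies the sign condition $\varepsilon(\phi_i\otimes\Ind_{W_E}^{W_F}(\chi^{-1}))=(-1)^{m_i}\chi\eta_{E/F}(-1)^{n_i/2}$.

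Next I would make the bridge to the component group explicit. Since $\Pi_\phi$ is generic, the recipe of \cite{ABPS16} identifies $\chi_\pi$ as the character of $S_\phi$ induced by $\chi_G$, which sends $\zeta_n\mapsto h(D)$; thus $\chi_\pi(\zeta_n)=h(D)$, a primitive $d$-th root of unity, and in particular $(\chi_\pi(\zeta_n))^{n_i/2}=h(D)^{m_id/2}$. The key point is that the right-hand side of \eqref{eq:reform} depends on $\pi$ only through $D$, and that as $\pi$ ranges over $\Pi_\phi$ the quantity $\chi_\pi(\zeta_n)$ ranges over all $d$-th roots of unity (as $d$ ranges over divisors of $n$ and $D$ over division algebras of the corresponding invariant). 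I would then compare $h(D)^{m_id/2}$ with $(-1)^{m_i}$: one checks, using that $h(D)$ is a primitive $d$-th root of unity, that $h(D)^{m_id/2}=(-1)^{m_i}$ — indeed $h(D)^{d/2}=-1$ when $d$ is even and the relevant relation holds when $d$ is odd (here $n_i=m_id$ is even forces $m_i$ even when $d$ is odd, so both sides are $1$). This is the computation that makes the exponent $\chi_\pi(\zeta_n)$ in \eqref{eq:reform} match the factor $(-1)^{m_i}$ in \cref{lem:comparison}, and simultaneously forces the relevance condition: a fixed summand $\phi_i$ can occur in a $\pi$ of $\GL_m(D)$-type only when $\chi_G$ factors through $S_{\phi_i}$, i.e. $d\mid n_i$, which is automatic here.

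Finally I would run the equivalence in both directions. Assuming \cref{conj:reform}(2): if $m(\pi,\chi)\ne0$ then \eqref{eq:reform} holds for every $i\in I_\phi$ in the fixed decomposition; pulling this back along the standard-module decomposition and the matching of $h(D)^{m_id/2}$ with $(-1)^{m_i}$ produces the desired $\varsigma$ (fixed points $=$ the symplectic $\phi_i$'s, transpositions pairing $\pi_i$ with $\pi_i^\vee\cdot\chi_{\GL_{m_i}(D)}$), and conversely such a $\varsigma$ rearranges the standard module into the shape \eqref{eq:L-param} with all sign conditions satisfied, giving $m(\pi,\chi)\ne0$. The reverse implication (\cref{lem:comparison} $\Rightarrow$ \cref{conj:reform}(2)) is the same dictionary read backwards, noting that the fixed decomposition \eqref{eq:L-param} we chose and the one coming from any particular standard module differ only by reordering and by the non-uniqueness in splitting the $J_\phi$-part, under which neither side of the claimed equivalence changes. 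The main obstacle I anticipate is purely bookkeeping: keeping straight the three sources of non-uniqueness (the choice of decomposition \eqref{eq:L-param}, the choice of standard module and its ordering, and the freedom in $\varsigma$ on the $J_\phi$-orbits) and checking that the sign identity $h(D)^{m_id/2}=(-1)^{m_i}$ holds uniformly in the parity of $d$; once that identity is in hand, the rest is a direct unwinding of \cite{ABPS16}'s parametrization together with the compatibility $\rec_{G(F)}(\widetilde\pi)=\bigoplus_i\rec_{\GL_{m_i}(D)}(\pi_i)$.
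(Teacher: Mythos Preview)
Your plan is correct and matches the paper's proof essentially step for step: the paper also reduces everything to the sign identity $\chi_\pi(\zeta_n)^{n_i/2}=h(D)^{n_i/2}=(-1)^{m_i}$ (writing $h(D)=\zeta_d^k$ with $\gcd(k,d)=1$ and splitting on the parity of $k$, which is equivalent to your split on the parity of $d$), and then passes between $\varsigma$ and the decomposition \eqref{eq:L-param} exactly as you describe. The one bookkeeping step the paper makes explicit and your outline only gestures at is the modification of $\varsigma$ when two fixed indices $i\ne j$ satisfy $\phi_i\cong\phi_j$: one replaces $\varsigma$ by the involution that swaps $i$ and $j$ (using $\phi_i\cong\phi_i^\vee\cdot\chi|_{F^\times}$), so that the resulting fixed-point set $I_\phi$ consists of pairwise non-isomorphic summands as required in \eqref{eq:L-param}.
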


\begin{proof}
Suppose there is an involutive permutation $\varsigma\in\fS_r$ which satisfies the above three conditions.
We prove that $\phi$ satisfies the conditions of \cref{conj:reform} (2).

Note that $\pi_{\varsigma(i)}\cong \pi_i^\vee\cdot\chi_{\GL_{m_i}(D)}$ is equivalent to $\phi_{\varsigma(i)}\cong\phi_i^\vee\cdot\chi|_{F^\times}$.
Hence $\phi$ takes values in $\GSp_n(\C)$ with similitude factor $\chi|_{F^\times}$.
If $\varsigma(i)=i$,  then $\phi_i\cong\phi_i^\vee\cdot\chi|_{F^\times}$ since $\phi_i$ takes values in $\GSp_{n_i}(\C)$ with similitude factor $\chi|_{F^\times}$.
Hence if there are two distinct fixed points $i,  j$ of $\varsigma$ such that $\phi_i\cong\phi_j$,  we can replace $\varsigma$ with the involutive permutation $\varsigma'\in\fS_r$ given by
    \[
    \varsigma'(k)=
        \begin{cases}
        j & \text{if $k=i$} \\
        i & \text{if $k=j$} \\
        \varsigma(k) & \text{otherwise}.
        \end{cases}
    \]
Repeating this procedure,  we may assume that for any fixed points $i,  j$ of $\varsigma$,  $\phi_i\not\cong\phi_j$. 

Let $I_\phi$ be the set of fixed points of $\varsigma$.
Take a subset $J_\phi\subset\{1,  \ldots, r\}\setminus I_\phi$ so that $J_\phi\cap\varsigma(J_\phi)=\emptyset$ and $\{1,  \ldots, r\}=I_\phi\cup J_\phi \cup\varsigma(J_\phi)$.
Then we obtain the decomposition
    \[
    \phi=\bigoplus_{i\in I_\phi}\phi_i \oplus 
    \bigoplus_{j\in J_\phi}(\phi_j \oplus (\phi_j^\vee\cdot\chi|_{F^\times})),  
    \]
satisfying the conditions of \eqref{eq:L-param}.

The Hasse invariant $h(D)$ of $D$ is a primitive $d$-th root of unity.
We write it as $h(D)=\zeta_d^k$ with some positive integer $k<d$ prime to $d$.
For $i\in I_\phi$,  we have
    \[
    \chi_\pi(\zeta_n)^{\frac{n_i}{2}}=h(D)^{\frac{n_i}{2}}=\zeta_d^{k\cdot\frac{m_id}{2}}
    =(\zeta_n^{\frac{n}{d}})^{k\cdot\frac{m_id}{2}}=(-1)^{km_i}.
    \]
If $k$ is odd,  this equals $(-1)^{m_i}$. 
If $k$ is even,  then $d$ is odd and $m_i$ is even.
Hence $(-1)^{km_i}=1=(-1)^{m_i}$.
In any case,  we have 
    \[
    \left(\chi_\pi(\zeta_n)\chi\eta_{E_F}(-1)\right)^{\frac{n_i}{2}}=(-1)^{m_i}\chi\eta_{E/F}(-1)^{\frac{n_i}{2}}.
    \]

Conversely,  suppose that $\phi$ takes values in $\GSp_n(\C)$ with similitude factor $\chi|_{F^\times}$ and is decomposed as \eqref{eq:L-param}.
For each $j\in J_\phi$,  take $\varsigma(j)\in\{1,  \ldots,  r\}$ so that $\phi_j^\vee\cdot\chi|_{F^\times}\cong\phi_{\varsigma(j)}$.
We set $\varsigma(\varsigma(j))=j$ for $\varsigma(j)\in\varsigma(J_\phi)$.
For $i\in I_\phi$,  we define $\varsigma(i)=i$.
Then $\varsigma$ becomes an involutive element in $\fS_r$.
From the above computation of $\chi_\pi(\zeta_n)^{\frac{n_i}{2}}$ for $i\in I_\phi$,  we see that $\varsigma$ satisfies the desired conditions.
\end{proof}

The next proposition is a consequence of the Mackey theory.

\begin{proposition}\label{prop:reduction_disc}
Keep the above notation.
The representation $\pi$ is $(H,  \chi_H)$-distinguished if and only if there is an involutive permutation $\varsigma\in\fS_r$ such that 
\begin{itemize}
\item $m_{\varsigma(i)}=m_i$ for each $i$.
\item if $\varsigma(i)=i$,  then $n_i$ is even and $\pi_i$ is $(H_i,  \chi_{H_i})$-distinguished.
Here,  $H_i$ denotes the centralizer of $E^\times$ in $\GL_{m_i}(D)$.
\item if $\varsigma(i)\neq i$,  $\pi_{\varsigma(i)}\cong \pi_i^\vee\cdot\chi_{\GL_{m_i}(D)}$.
\end{itemize}
\end{proposition}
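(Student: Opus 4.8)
The plan is to deduce \cref{prop:reduction_disc} from the Mackey-theoretic analysis of $H$-orbits on the flag-type variety attached to the standard module $\widetilde\pi = \pi_1\times\cdots\times\pi_r$. Concretely, $\pi$ is a quotient of the parabolically induced representation $\Ind_{P(F)}^{G(F)}(\pi_1\otimes\cdots\otimes\pi_r)$ for $P = MU$ with $M \cong \GL_{m_1}(D)\times\cdots\times\GL_{m_r}(D)$, and since $m(\pi,\chi)\le 1$ (\cref{hyp:1}) and, by genericity, $\pi \cong \widetilde\pi$, it suffices to compute $\Hom_{H(F)}(\Ind_{P(F)}^{G(F)}(\pi_1\otimes\cdots\otimes\pi_r),\chi_H)$. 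First I would apply the geometric lemma / Mackey theory for the double cosets $P(F)\backslash G(F)/H(F)$: these are parametrized (by work going back to Springer and, in the relevant $p$-adic generality, by the references on symmetric spaces) by certain involutions in a restricted Weyl group, which in the present situation — $H$ the centralizer of $E^\times$, an inner form of $\GL_{n/2}(E)$ — correspond to involutive permutations $\varsigma\in\fS_r$ pairing up the blocks, together with the constraint $m_{\varsigma(i)} = m_i$ so that the paired blocks have matching size.

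The key steps, in order, are: (i) enumerate the relevant $P$--$H$ double cosets and show that an open-orbit (or, more precisely, a nonzero-contribution) analysis reduces the computation to a sum over involutions $\varsigma$; (ii) for each such $\varsigma$, identify the stabilizer and the contribution of the corresponding orbit as a Hom-space over a smaller group — for a fixed point $i$ of $\varsigma$ this is $\Hom_{H_i(F)}(\pi_i,\chi_{H_i})$ where $H_i = \Cent_{\GL_{m_i}(D)}(E^\times)$, and for a $2$-cycle $\{i,\varsigma(i)\}$ this is $\Hom_{\GL_{m_i}(D)(F)}(\pi_i\otimes\pi_{\varsigma(i)}, \chi_{\GL_{m_i}(D)})$, which is nonzero (and then one-dimensional) exactly when $\pi_{\varsigma(i)}\cong\pi_i^\vee\cdot\chi_{\GL_{m_i}(D)}$; (iii) invoke a closed-orbit/hereditarity argument (the standard module is irreducible here, so no nonzero map can come from a non-closed piece in a way that would be killed when passing to the quotient $\pi$, and conversely genericity guarantees that a distinguishing functional survives) to conclude that $m(\pi,\chi)\neq 0$ if and only if at least one $\varsigma$ contributes, which after discarding the $2$-cycle constraints is precisely the asserted dichotomy. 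The computation for the $2$-cycles uses the standard fact that $\Hom_{\GL(\mathrm{diag})}(\sigma\otimes\tau,\xi\circ\det)\neq0$ iff $\tau\cong\sigma^\vee\otimes(\xi\circ\det)$ together with the character-twist bookkeeping $\chi_{\GL_{m_i}(D)} = \chi|_{F^\times}\circ\mathrm{Nrd}$.

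I expect the main obstacle to be step (i)–(iii) at the level of rigor: one must be careful that the Mackey filtration on $\Ind_{P(F)}^{G(F)}(\pi_1\otimes\cdots\otimes\pi_r)$ actually yields, after taking $(H,\chi_H)$-coinvariants, only the contributions claimed, and that passing to the irreducible quotient $\pi$ does not lose or gain multiplicity — this is where genericity of $\Pi_\phi$ and the multiplicity-one hypothesis \cref{hyp:1} are essential, together with the fact that the non-closed orbits contribute modules induced from proper parabolics of the smaller groups, which by a theory-of-the-second-adjointness / square-integrability argument cannot support the relevant functionals on discrete-series data. A secondary technical point is ensuring that the parametrization of $P(F)\backslash G(F)/H(F)$ really does land on involutions $\varsigma$ with $m_{\varsigma(i)}=m_i$ and nothing more exotic; this is where one uses that $E\hookrightarrow\M_m(D)$ is unique up to conjugacy (Skolem–Noether) so that the orbit structure is insensitive to the choice of embedding and matches the split-case computation for $\GL_{n/2}(E)$. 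Everything else — the explicit identification of stabilizers, the normalization of characters, and the reduction of $m(\pi,\chi)\neq0$ to the existence of a single good $\varsigma$ — is routine bookkeeping once the orbit decomposition is in hand.
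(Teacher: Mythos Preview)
Your proposal is correct and follows essentially the same route as the paper: the paper states that the proposition ``is a consequence of the Mackey theory,'' and its proof consists of citing \cite[Theorem 1.3]{Suz21} for the case $\chi=\mathbf{1}$ and observing that the same argument works with a general $\chi$. Your sketch of the double-coset analysis on $P(F)\backslash G(F)/H(F)$, parametrized by involutions $\varsigma\in\fS_r$ with $m_{\varsigma(i)}=m_i$, together with the identification of the orbit contributions (fixed points giving $\Hom_{H_i(F)}(\pi_i,\chi_{H_i})$, $2$-cycles giving the twisted-contragredient condition), is exactly what that reference carries out in detail; your remark that genericity of $\Pi_\phi$ forces $\pi\cong\widetilde\pi$ is also the reason no subtlety arises in passing between the induced module and its Langlands quotient.
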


\begin{proof}
First suppose $F$ is a finite extension of $\Q_p$.
If $\chi=\mathbf{1}$ is the trivial character,  this is \cite[Theorem 1.3]{Suz21}.
The argument remains valid in the general case with minor modifications.
\end{proof}

Comparing the conditions in \cref{lem:comparison} with those in \cref{prop:reduction_disc},  we obtain the next corollary.

\begin{corollary}\label{cor:reduction_disc}
\cref{conj:PTB} for essentially square integrable representations implies \cref{conj:reform}.
\end{corollary}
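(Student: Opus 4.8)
The plan is to derive \cref{conj:reform} by matching two combinatorial descriptions that are already available: \cref{lem:comparison}, which recasts \cref{conj:reform}(2) in terms of an involutive permutation of the blocks of a standard module, and \cref{prop:reduction_disc}, which gives the analogous criterion for the $\chi_H$-distinction of $\pi$. The bridge between the fixed-point clauses of the two criteria is precisely \cref{conj:PTB} applied to the essentially square integrable blocks, so the whole argument is a substitution.

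For part (2), fix $\phi$ taking values in $\GSp_n(\C)$ with similitude factor $\chi|_{F^\times}$, assume $\Pi_\phi$ is generic, and take $\pi\in\Pi_\phi$, realized in some $\GL_m(D)$ and isomorphic to its standard module $\pi_1\times\cdots\times\pi_r$ with $\pi_i\in\Irr_\disc(\GL_{m_i}(D))$, $\phi_i=\rec_{\GL_{m_i}(D)}(\pi_i)$ and $n_i=m_id$. By \cref{prop:reduction_disc}, $\pi$ is $\chi_H$-distinguished if and only if there is an involutive $\varsigma\in\fS_r$ with $m_{\varsigma(i)}=m_i$ for all $i$, with $\pi_{\varsigma(i)}\cong\pi_i^\vee\cdot\chi_{\GL_{m_i}(D)}$ when $\varsigma(i)\neq i$, and with $n_i$ even and $\pi_i$ being $(H_i,\chi_{H_i})$-distinguished when $\varsigma(i)=i$. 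Since $n_i$ is even whenever $\varsigma(i)=i$, I may apply \cref{conj:PTB} to the essentially square integrable representation $\pi_i$: it is $(H_i,\chi_{H_i})$-distinguished if and only if $\phi_i$ takes values in $\GSp_{n_i}(\C)$ with similitude factor $\chi|_{F^\times}$ and $\varepsilon(\phi_i\otimes\Ind_{W_E}^{W_F}(\chi^{-1}))=(-1)^{m_i}\chi\eta_{E/F}(-1)^{n_i/2}$. Substituting this equivalence into the fixed-point clause turns the criterion of \cref{prop:reduction_disc} into verbatim the criterion of \cref{lem:comparison}; hence $m(\pi,\chi)\neq0$ if and only if \eqref{eq:reform} holds for every $i\in I_\phi$, which is \cref{conj:reform}(2).

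For part (1), suppose $m(\Pi_\phi,\chi)\neq0$ and choose $\pi\in\Pi_\phi$, inside some $\GL_m(D)$, with $m(\pi,\chi)\neq0$. Writing $\pi$ as a quotient of a standard module $\pi_1\times\cdots\times\pi_r$, distinction of $\pi$ forces distinction of $\pi_1\times\cdots\times\pi_r$ (precomposition with the defining surjection embeds the relevant $\Hom$ spaces), and $\phi=\bigoplus_i\phi_i$ with $\phi_i=\rec_{\GL_{m_i}(D)}(\pi_i)$. The Mackey-theoretic analysis behind \cref{prop:reduction_disc}, which in the direction ``distinction $\Rightarrow$ combinatorial condition'' needs no genericity hypothesis, then supplies an involutive $\varsigma\in\fS_r$ of the above shape. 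For each $i$ with $\varsigma(i)=i$ the block $\pi_i$ is $(H_i,\chi_{H_i})$-distinguished, so \cref{conj:PTB}(1) applied to the square integrable $\pi_i$ shows $\phi_i$ takes values in $\GSp_{n_i}(\C)$ with similitude factor $\chi|_{F^\times}$; the remaining blocks occur in pairs $\phi_j\oplus(\phi_j^\vee\cdot\chi|_{F^\times})$, each of which manifestly takes values in $\GSp_{2n_j}(\C)$ with similitude factor $\chi|_{F^\times}$. A direct sum of symplectic-similitude representations sharing a common similitude character is again of this form, so $\phi$ takes values in $\GSp_n(\C)$ with similitude factor $\chi|_{F^\times}$, which is part (1).

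The argument is a bookkeeping comparison of two previously established criteria, so there is no serious obstacle; the only point deserving a word of care is the passage in part (1) to the square integrable blocks when $\Pi_\phi$ is not assumed generic, which I handle by noting that a quotient is distinguished only if the ambient standard module is, together with the genericity-free forward direction of the Mackey argument of \cite{Suz21}. The compatibilities that make the two criteria line up — the behaviour of $\rec$ under the standard-module decomposition, the identity $\chi_\pi(\zeta_n)^{n_i/2}=(-1)^{m_i}$, and the evenness of $n_i$ at the fixed points of $\varsigma$ — are already contained in the statements of \cref{lem:comparison} and \cref{prop:reduction_disc} that I am allowed to invoke.
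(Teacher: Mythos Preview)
Your proof is correct and follows the same approach as the paper: the paper's proof is the single sentence ``Comparing the conditions in \cref{lem:comparison} with those in \cref{prop:reduction_disc}, we obtain the next corollary,'' and your argument is precisely this comparison spelled out, using \cref{conj:PTB} at the fixed points of $\varsigma$ to bridge the two criteria. Your treatment of part~(1) (passing to the standard module and invoking the genericity-free forward direction of the Mackey analysis) supplies detail that the paper's one-line proof leaves implicit, but it is consistent with the intended argument.
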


In particular, \cref{thm:known_cases} and \cref{cor:reduction_disc} imply the following.

\begin{corollary}
\cref{conj:reform} holds when $p\neq 2$ and $\chi=\mathbf{1}$ is the trivial character.
\end{corollary}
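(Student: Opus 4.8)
The goal is to prove the final Corollary: \cref{conj:reform} holds when $p \neq 2$ and $\chi = \mathbf{1}$ is the trivial character. The plan is essentially to chain together two results already established in the excerpt.

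First I would invoke \cref{thm:known_cases}(a), which asserts that \cref{conj:PTB} holds when $F$ is a finite extension of $\Q_p$ with $p \neq 2$ and $\chi = \mathbf{1}$. In particular, this gives \cref{conj:PTB} for essentially square integrable representations $\pi \in \Irr_\disc(G(F))$ in this setting, since $\Irr_\disc(G(F)) \subset \Irr(G(F))$ and both directions (the necessary condition and, for $\pi \in \Irr_\disc(G(F))$, the converse) are part of the statement of \cref{conj:PTB}. Next I would apply \cref{cor:reduction_disc}, which says precisely that \cref{conj:PTB} for essentially square integrable representations implies \cref{conj:reform}. Since the hypotheses $p \neq 2$ and $\chi = \mathbf{1}$ are preserved throughout, combining these two gives \cref{conj:reform} in exactly the claimed range.

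There is essentially no obstacle here: this corollary is a formal consequence of \cref{thm:known_cases} and \cref{cor:reduction_disc}, both of which have already been stated (and, in the case of \cref{cor:reduction_disc}, proved via \cref{lem:comparison} and \cref{prop:reduction_disc}). The only point requiring a word of care is to confirm that the reduction in \cref{cor:reduction_disc} genuinely only uses \cref{conj:PTB} for members of $\Irr_\disc$: indeed, the proof of \cref{cor:reduction_disc} compares \cref{lem:comparison} with \cref{prop:reduction_disc}, and the conditions on the discrete summands $\pi_i$ appearing there are exactly statements of \cref{conj:PTB} for the essentially square integrable representations $\pi_i \in \Irr_\disc(\GL_{m_i}(D))$. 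Hence the chain is valid, and the proof reduces to a one-line citation.

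Concretely, the write-up would be:

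\begin{proof}
By \cref{thm:known_cases}, \cref{conj:PTB} holds when $p \neq 2$ and $\chi = \mathbf{1}$; in particular it holds for essentially square integrable representations in this case. The claim then follows from \cref{cor:reduction_disc}.
\end{proof}
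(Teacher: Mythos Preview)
Your proposal is correct and matches the paper's approach exactly: the paper states this corollary immediately after \cref{cor:reduction_disc} with the one-line justification that it follows from \cref{thm:known_cases} and \cref{cor:reduction_disc}.
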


\begin{remark}\label{rem:archimedean}
We briefly discuss the case where $F$ is archimedean.
Suppose that $F=\R$ and $E=\C$.
Note that $\eta_{E/F}=\sgn$ is the sign character of $\R^\times$.

Let $\H$ be the quaternion division algebra.
Then we have $G=\GL_n(\R)$ or $G=\GL_{n/2}(\H)$.
We define the character $\chi_G$ of $Z_{\SL_n(\C)}$ as in \cref{thm:ABPS}.
Since $h(\R)=1$ and $h(\H)=-1$,  the character $\chi_G$ is trivial if $G=\GL_n(\R)$ and the unique character of order two if $G=\GL_{n/2}(\H)$.

Also we define the $L$-packet $\Pi_\phi$ and the component group $S_\phi$ for $\phi\in\Phi(\GL_n(\R))$ as in \cref{def:packet} and \cref{def:S},  respectively.
Then \cref{thm:ABPS} remains valid according to \cite[Theorem 2.2]{ABPS16}.
In particular,  we have 
    \[
    S_\phi = 
        \begin{cases}
        \Z/2\Z &  \text{if $\phi$ is relevant to $\GL_{n/2}(\H)$,}  \\
         \{1\}  & \text{otherwise.}
         \end{cases}
    \]

The equation \eqref{eq:reform} becomes
    \[
    \varepsilon\left(\phi_i\otimes\Ind_{W_E}^{W_F}(\chi^{-1})\right)
    = \epsilon(G)^{\frac{n_i}{2}}\chi(-1)^{\frac{n_i}{2}},  
    \]
where we set $\epsilon(G)=-1$ if $G=\GL_n(\R)$ and $\epsilon(G)=1$ if $G=\GL_{n/2}(\H)$.
This coincides with the root number condition considered in \cite{ST23}.
It follows that the ``only if'' part of \cref{conj:reform} is nothing but \cite[Theorem 1.1]{ST23}.
On the other hand,  the ``if'' part of \cref{conj:reform} follows from \cite[Theorem 5.4,  Remark 5.6]{MOY24}.
Therefore,  \cref{conj:reform} holds for $F=\R$.
\end{remark}

\section{The multiplicity formula}
\label{sec:muult}

In this section,  we discuss a relation between \cref{conj:reform} and the multiplicity formula predicted by Wan \cite{Wan22}.
First we recall the definition of the geometric multiplicity in our setting,  which is defined in \cite[Definition 7.1]{Wan22} for general spherical varieties. 

Set $\omega=\chi^{\frac{n}{2}}|_{F^\times}$ and we regard it as a character of $F^\times\cong Z_G(F)$. 
Let $\theta$ be a quasi-character on $G(F)$ with central chracter $\omega$.
We fix a set $\cT_\elliptic(H)$ of representatives of $H(F)$-conjugacy classes of maximal elliptic tori in $H$.

First, consider the case $m$ is odd.
Note that in this case $E$ embeds in $D$.
Let $C=\Cent_D(E)$ be the centralizer of $E$ in $D$.
This is a central division algebra over $E$ of dimension $\dim_E(C)=\tfrac{d^2}{4}$ and $H\cong\GL_m(C)$.
\begin{definition}\label{def:geom_mult1}
Suppose that $m$ is odd.
Set
    \[
    m_\geom(\theta,  \chi)=\sum_{T\in\cT_\elliptic(H)}|W(H,  T)|^{-1}\int_{Z_G(F)\bs T(F)}D^H(t)\theta(t)\chi_H^{-1}(t) \rd t.
    \]
For $\pi\in\Irr(G(F))$ with central chracter $\omega_\pi=\omega$,  we set 
    \[
    m_\geom(\pi,  \chi)=m_\geom(\theta_\pi,  \chi).
    \]
\end{definition}

Next,  suppose that $m$ is even.
In this case $E$ embeds in $\M_m(D)$ as
    \[
    E\hookrightarrow \M_m(D)\,; \ a+b\delta\mapsto 
        \begin{pmatrix}
        a\cdot\mathbf{1}_{m/2} & \delta^2 b\cdot\mathbf{1}_{m/2} \\
        b\cdot\mathbf{1}_{m/2} & a\cdot\mathbf{1}_{m/2}
        \end{pmatrix},  \quad a,  b\in F,
    \]
where $\delta$ is an element of $E\setminus F$ such that $\delta^2\in F$.
Under this embedding,  the centralizer $\Cent_{\M_m(D)}(E)$ of $E$ in $\M_m(D)$ consists of
    \[
        h(A, B):=\begin{pmatrix}
        A & \delta^2 B \\
        B & A
        \end{pmatrix},  \quad A,  B\in \M_{m/2}(D)
    \]
and $H= G\cap\Cent_{\M_m(D)}(E)$.

Let $H'$ be the subgroup of $H$ consisting of $h(A,  \mathbf{0}_{m/2})$ with $A\in\GL_{m/2}(D)$.
For a maximal elliptic torus $T'$ of $H'$,  there is an extension $K/F$ of degree $n/2$ such that $T'(F)\cong K^\times$.
For $t'\in T'(F)\cap G_\reg(F)$,  let $G_{t'}$ (resp.\,$H_{t'}$) be the centralizer of $t'$ in $G$ (resp.\,$H$).
Then $G_{t'}(F)\simeq\GL_2(K)$ and $H_{t'}(F)\cong(K\otimes_F E)^\times$.
Let $\fg_{t'}$ be the Lie algebra of $G_{t'}$  and $\cO_{t'}\in\Nil(\fg_{t'}(F))$ the unique regular nilpotent orbit in $\fg_{t'}(F)$.

We also fix a set $\cT_\elliptic(H')$ of representatives of $H'(F)$-conjugacy classes of maximal elliptic tori in $H'$.

\begin{definition}\label{def:geom_mult2}
Suppose that $m$ is even.
Set
    \[
    \begin{aligned}
    m_\geom(\theta,  \chi)=&\sum_{T\in\cT_\elliptic(H)}|W(H,  T)|^{-1}
    \int_{Z_G(F)\bs T(F)}D^H(t)\theta(t)\chi_H^{-1}(t) \rd t \\ 
    &\quad +\sum_{T'\in\cT_\elliptic(H')}|W(H', T')|^{-1}
    \int_{Z_G(F)\bs T'(F)}D^{H'}(t')^2 c_{\theta,  \cO_{t'}}(t')\chi_H^{-1}(t') \rd t'.
    \end{aligned}
    \]
For $\pi\in\Irr(G(F))$ with central chracter $\omega_\pi=\omega$,  we set 
    \[
    m_\geom(\pi,  \chi)=m_\geom(\theta_\pi,  \chi).
    \]
\end{definition}

The convergence is assured by the next lemma.

\begin{lemma}
The integrals
    \[
    \int_{Z_G(F)\bs T(F)}D^H(t)\theta(t)\chi_H^{-1}(t) \rd t,  \qquad 
    \int_{Z_G(F)\bs T'(F)}D^{H'}(t')^2 c_{\theta,  \cO_{t'}}(t')\chi_H^{-1}(t') \rd t'
    \]
converge absolutely for a quasi-character $\theta\in C^\infty(G_\reg(F))$,   $T\in\cT_\elliptic(H)$ and $T'\in\cT_\elliptic(H')$. 
\end{lemma}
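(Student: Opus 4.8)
The assertion is a special instance of the convergence of the geometric multiplicity attached to a \emph{tempered} spherical pair, which underlies the geometric expansions of Waldspurger \cite{Wal10}, Beuzart-Plessis \cite{BP18} and, in the generality used here, Wan \cite{Wan22}; the plan is to indicate how the estimate goes in the present situation, the only non-formal ingredient being the temperedness of $(G,H)$ proved above. Since $\chi_H$ is unitary it may be dropped from the absolute value. The first — and most useful — observation is that both domains are \emph{compact}: the centre $Z_G$ coincides with the maximal $F$-split central torus $A_H$ of $H$ (both are the scalar copy of $F^\times$), and since $T$ is elliptic in $H$ we have $A_T=A_H=Z_G$, so $Z_G(F)\bs T(F)=A_T(F)\bs T(F)$ is compact because $T/A_T$ is anisotropic; the same argument with $H'$ in place of $H$ shows $Z_G(F)\bs T'(F)$ is compact. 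Thus each integrand is smooth off a closed subset of measure zero of a compact space, and it suffices to prove local integrability of $|D^H(t)\theta(t)|$, resp.\ of $D^{H'}(t')^2|c_{\theta,\cO_{t'}}(t')|$, near every semisimple point of $T(F)$, resp.\ $T'(F)$.

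For the first integral I would fix a semisimple $x\in T(F)$, a good neighbourhood $\omega_x\subset\fg_x(F)$ of $0$, and write $t=x\exp X$ with $X\in\ft(F)$ small, $\ft=\Lie T$. The quasi-character expansion gives $\theta(x\exp X)=\sum_{\cO\in\Nil(\fg_x(F))}c_{\theta,\cO}(x)\,\hat j(\cO,X)$ for regular $X$, and Harish-Chandra's regularity bounds for Fourier transforms of nilpotent orbital integrals give $|\hat j(\cO,X)|\ll|D^{\fg_x}(X)|^{-1/2}$. Here one uses that $T$ is a maximal torus of $G$: writing $T(F)=L^\times$, the field $L$ contains the centre $E^\times$ of $H$ and $[L:F]=n=\sqrt{\dim_F\M_m(D)}$, so $L$ is a maximal subfield of $\M_m(D)$, and the same computation shows $L$ is maximal inside the central simple $F[x]$-algebra $\Cent_{\M_m(D)}(x)$; hence $\ft$ is a Cartan subalgebra of $\fg_x$ and $|D^G(x\exp X)|\asymp|D^{\fg_x}(X)|$ for small $X$, so that $|\theta(t)|\ll|D^G(t)|^{-1/2}$ near $x$. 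It then remains to bound $\int|D^H(t)|\,|D^G(t)|^{-1/2}$ over a neighbourhood of $0$ in $\ft(F)$; the temperedness of $(G,H)$ — the identity $\rho_{G/H}=0$ from the preceding lemma — forces the zero of $D^H$ along $T(F)$ at $x$ to compensate the singularity of $|D^G|^{-1/2}$, and a standard induction on the depth of the singularity (as in \cite{Wal10}, \cite{BP18}) then yields local integrability.

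For the second integral I would argue similarly near a singular $t'_0\in T'(F)$, writing $t'=t'_0\exp X$ with $X\in\ft'(F)$ small; since $D^{H'}(t'_0)\neq 0$, everything reduces to bounding $t'\mapsto c_{\theta,\cO_{t'}}(t')$ as $t'\to t'_0$. Comparing the good-neighbourhood expansions of $\theta$ around $t'_0$ and around $t'$, this coefficient is a fixed linear combination of the $c_{\theta,\cO'}(t'_0)$, $\cO'\in\Nil(\fg_{t'_0}(F))$, whose coefficients are the germs of the functions $\hat j_{\fg_{t'_0}}(\cO',\cdot)$ along the centralizer $\fg_{t'}=\Cent_{\fg_{t'_0}}(\ft')$; these germs are homogeneous of explicitly computable negative degree, and the point (again Waldspurger's germ analysis) is that the resulting growth of $c_{\theta,\cO_{t'}}(t')$ is absorbed exactly by the factor $D^{H'}(t')^2$, so that $D^{H'}(t')^2|c_{\theta,\cO_{t'}}(t')|$ stays locally bounded near $t'_0$. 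The genuine technical content lies entirely in these last estimates — pinning down the homogeneity degrees of the relevant germs and verifying the balancing — while the compactness reduction and the verification that $T$ is a maximal torus of $G$ are routine; in particular no idea beyond the quasi-character machinery of \cite{Wal10} and \cite{Wan22} should be required.
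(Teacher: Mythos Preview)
Your strategy is the paper's: bound $|\theta|$ by $(D^G)^{-1/2}$ via the quasi-character property, reduce the first integral to the integrability of $D^H(D^G)^{-1/2}$ on the compact quotient $Z_G(F)\backslash T(F)$, and for the second integral control the growth of $c_{\theta,\cO_{t'}}$ by the weight $D^{H'}(\cdot)^2$. The paper does not spell any of this out: it simply cites \cite[Lemma~B.1.2]{BP20} for the integrability of $(D^G)^{-1/2}D^H$ over $Z_G(F)\backslash T(F)$ and \cite[Lemma~3.2]{BPW19} for the second integral, then notes that $(D^G)^{1/2}\theta$ is locally bounded.

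Two points in your sketch need correction. First, the integrability of $D^H(D^G)^{-1/2}$ over $Z_G(F)\backslash T(F)$ is not a consequence of temperedness or of $\rho_{G/H}=0$; it is a general root-system estimate (this is exactly what \cite[Lemma~B.1.2]{BP20} records), so invoking temperedness here is a red herring. Second, your treatment of the second integral is internally inconsistent: you write ``since $D^{H'}(t'_0)\neq 0$'' at a singular $t'_0$, but the only problematic points on $T'(F)$ are precisely those where $t'_0$ fails to be regular in $H'$, i.e.\ where $D^{H'}(t'_0)=0$. At points regular in $H'$ the centralizer $G_{t'}\cong\GL_2(K)$ is locally constant and $t'\mapsto c_{\theta,\cO_{t'}}(t')$ is already smooth, so there is nothing to prove; it is near the $H'$-irregular points that the vanishing of $D^{H'}(t')^2$ must absorb the blowup of $c_{\theta,\cO_{t'}}(t')$, as you in fact say correctly a few lines later.
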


\begin{proof}
The absolute convergence of the second integral is taken care of by \cite[Lemma 3.2]{BPW19}.
It follows from \cite[Lemma B.1.2]{BP20} that $(D^G)^{-\frac12}D^H$ is integrable over $Z_G(F)\bs T(F)$ for each $T\in\cT_\elliptic(H)$.
Since $(D^G)^\frac12\theta$ is locally bounded,  the  first integral converges absolutely.
\end{proof}

The following is \cite[Conjecture 7.4 (2)]{Wan22} for linear periods.

\begin{conjecture}\label{conj:mult_formula}
The multiplicity formula
    \begin{equation}\label{eq:mult_formula}
    m(\pi,  \chi)=m_\geom(\pi,  \chi)
    \end{equation}
holds for all $\pi\in\Irr_\disc(G(F))$.
\end{conjecture}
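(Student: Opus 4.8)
The final statement is the specialization of Wan's general conjectural multiplicity formula \cite{Wan22} to the linear-period pair $(G,H,\chi_H)$, and the natural route to it is a local relative trace formula for this pair, modeled on Waldspurger \cite{Wal10} and its extensions by Beuzart-Plessis \cite{BP18}, \cite{BP20} and Wan \cite{Wan19a}, \cite{Wan19b}. The plan is to attach to every strongly cuspidal $f\in\cC(G(F),\omega)$ a regularized distribution $I(f)$ --- built, schematically, by integrating the kernel attached to $f$ over $(Z_G(F)\bs H(F))^2$ against $\chi_H^{-1}$ on one factor and $\chi_H$ on the other, with Arthur's truncation ensuring convergence --- and then to give $I(f)$ two independent expansions: a \emph{geometric} one, $I(f)=m_\geom(\theta_f,\chi)$, and a \emph{spectral} one, expressing $I(f)$ through the values $\tr\pi(f)$ and the multiplicities $m(\pi,\chi)$ over the tempered dual of $G(F)$ with central character $\omega$. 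Granting both expansions, one fixes $\pi\in\Irr_\disc(G(F))$ and takes $f=f_\pi$ a pseudo-coefficient of $\pi$: a strongly cuspidal function with $\theta_{f_\pi}=\theta_\pi$ on $G_\reg(F)$ and $\tr\pi'(f_\pi)=\delta_{\pi,\pi'}$ on the tempered spectrum; such $f_\pi$ exists because $\pi$ is essentially square integrable. The spectral side then collapses to $m(\pi,\chi)$, while the geometric side equals $m_\geom(\theta_{f_\pi},\chi)=m_\geom(\theta_\pi,\chi)=m_\geom(\pi,\chi)$, which is \eqref{eq:mult_formula}.

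For the geometric expansion the essential mechanism is that strong cuspidality of $f$ annihilates the contribution of every non-elliptic conjugacy class of $H(F)$, so that in the limit only the maximal elliptic tori $T\in\cT_\elliptic(H)$ --- and, when $m$ is even, also the tori $T'\in\cT_\elliptic(H')$ --- survive. Localizing the truncated integral near a semisimple element and applying Harish-Chandra's local character expansion converts the $T$-contributions into the orbital integrals $\int_{Z_G(F)\bs T(F)}D^H(t)\,\theta_f(t)\,\chi_H^{-1}(t)\,\rd t$ appearing in \cref{def:geom_mult}, and the $T'$-contributions into the germ integrals $\int_{Z_G(F)\bs T'(F)}D^{H'}(t')^2\,c_{\theta_f,\cO_{t'}}(t')\,\chi_H^{-1}(t)\,\rd t'$, the constant $c_{\theta_f,\cO_{t'}}(t')$ emerging as the coefficient of the regular nilpotent germ of the localized $\theta_f$ in $\fg_{t'}(F)$. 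Matching these surviving terms with $m_\geom(\theta_f,\chi)$ on the nose --- Weyl-group factors, Haar measure normalizations, and the normalization of nilpotent orbital integrals built into the germs --- is the delicate part of the bookkeeping.

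For the spectral expansion one combines the Plancherel formula for $\cC(G(F),\omega)$ with an analysis of the $(H,\chi_H)$-invariant functionals on tempered representations; the indispensable input here is that $(G,H)$ is a tempered symmetric pair, established above, so that discrete series are $H$-integrable and the relevant period functionals depend continuously on the representation. \cref{hyp:1} is precisely what lets the spectral side be written linearly in $m(\pi,\chi)$ rather than in $m(\pi,\chi)^2$, matching the behaviour of the geometric side on pseudo-coefficients. Since $G=\GL_m(D)$ is in general a non-split inner form, one must moreover carry this harmonic analysis through the Jacquet-Langlands correspondence, keeping track of the signs $(-1)^m$ of \cref{thm:DKV}.

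The principal obstacle is the geometric expansion: proving absolute convergence of the truncated double integral uniformly in the truncation parameter, controlling its behaviour as the parameter tends to infinity, and identifying the limit with \emph{exactly} $m_\geom(\theta_f,\chi)$ as defined in \cref{def:geom_mult}. This is, for linear periods on inner forms, the analogue of the technical core of \cite{Wal10} and \cite{BP20}, and the $m$ even case adds a further layer in which the extra torus $H'$ and the regular nilpotent germ must be derived rather than postulated. Once both expansions are available, the reduction to pseudo-coefficients in the final step is purely formal.
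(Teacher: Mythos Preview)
The statement you are addressing is \emph{not} a theorem in the paper but \cref{conj:mult_formula}, explicitly stated as a conjecture; the paper gives no proof of it. What the paper does provide, in \cref{subsec:RTF}, is exactly the reduction you outline: it defines $I(f)$ for strongly cuspidal $f\in\cC(G(F),\omega)$, records the spectral expansion $I(f)=I_\spec(f)$ for cuspidal $f$ (as a known consequence of \cite[Theorem~3.1.1]{BP18}), \emph{conjectures} the geometric expansion $I(f)=I_\geom(f)=m_\geom(\theta_f,\chi)$ as \cref{conj:geom_exp}, and then proves (in the Proposition immediately following) that \cref{conj:geom_exp} implies \cref{conj:mult_formula} by substituting a matrix coefficient of $\pi$ rather than a pseudo-coefficient. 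Your strategy is therefore the same as the paper's, with the cosmetic difference of pseudo-coefficients versus matrix coefficients in the last step.

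The substantive point is that your proposal does not resolve the conjecture any more than the paper does: you correctly identify the geometric expansion as ``the principal obstacle,'' and your paragraph describing it is a plausible outline but not a proof. In particular, the convergence and limit arguments you allude to---controlling the truncated integral uniformly, isolating the elliptic tori of $H$ and (for $m$ even) the extra contribution from $H'$ with its regular nilpotent germ---are precisely what is left open. So your proposal is an accurate description of the expected path, matching the paper's own framing, but it should be presented as a strategy toward the conjecture rather than as a proof.
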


The author is indebted to Chen Wan who suggested the proof of the next proposition.

\begin{proposition}\label{prop:deduce}
Assume \cref{hyp:2} and \cref{conj:mult_formula} hold.
For $\phi\in\Phi_\disc(\GL_n(F))$,  
    \[
    m(\Pi_\phi,  \chi)=
        \begin{cases}
        n/2 & \text{if $\phi$ takes values in $\GSp_n(\C)$ with similitude factor $\chi|_{F^\times}$},  \\
        0 & \text{otherwise}.
        \end{cases}
    \]
\end{proposition}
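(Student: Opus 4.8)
The plan is to evaluate $m(\Pi_\phi,\chi)=\sum_{\pi\in\Pi_\phi}m_\geom(\theta_\pi,\chi)$ (by \cref{conj:mult_formula}) by inserting the Jacquet--Langlands character relation into the integrals of \cref{def:geom_mult}, summing over the packet, and observing that almost everything cancels, leaving a quantity I can recognise as a Shalika-period geometric multiplicity. First I would recall that, since $\phi$ is discrete, $S_\phi$ is cyclic of order $n$ and the character $\chi_G$ factors through it for every inner form $G$ of $\GL_n(F)$; hence $\Pi_\phi=\{\pi_G\}$ with $G$ running over the $n$ inner forms $\GL_m(D)$ of $\GL_n(F)$, all members have the same Jacquet--Langlands transfer, and $\JL(\pi_G)=\pi_0$ where $\pi_0$ is the member supported on $\GL_n(F)$. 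By \cref{thm:DKV} one has $\theta_{\pi_G}(g)=(-1)^{n-m}\theta_{\pi_0}(g')$ whenever $g\in G_\reg(F)$, $g'\in\GL_{n,\reg}(F)$ and $g\leftrightarrow g'$.

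Next I would rewrite each $m_\geom(\theta_{\pi_G},\chi)$ via \cref{def:geom_mult} in terms of $\theta_{\pi_0}$ and data depending only on fields. For a field $L\supset E$ with $[L:E]=n/2$ the torus $L^\times$ is a maximal elliptic torus of the group $H$ attached to \emph{every} inner form $G$ --- a central division algebra over $E$ of degree dividing $n/2$ is always split by such an $L$ --- and the Weyl discriminant $D^{H}$, the number $|W(H,L^\times)|=|\Aut_E(L)|$, the invariant measure and the restriction $\chi_H|_{L^\times}=\chi\circ\Nm_{L/E}$ are all insensitive to inner twisting; so the first term of $m_\geom(\theta_{\pi_G},\chi)$ equals $(-1)^{n-m}\Sigma_1$, where $\Sigma_1=\sum_L|\Aut_E(L)|^{-1}\int_{F^\times\backslash L^\times}D^H(t)\,\theta_{\pi_0}(t)\,(\chi\circ\Nm_{L/E})^{-1}(t)\rd t$ depends only on $\pi_0$ and $\chi$. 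Likewise, when $m$ is even and $K/F$ has degree $n/2$, the torus $K^\times$ is a maximal elliptic torus of $H'\cong\GL_{m/2}(D)$ for every such $G$, and for $t'\in K^\times$ generating $K$ one has $\Cent_G(t')\cong\GL_2(K)\cong\Cent_{\GL_n(F)}(t')$, so $\fg_{t'}(F)\cong\gl_2(K)$ and the germ functions $\hat{j}(\cO,\cdot)$ on it (normalised by the same invariant form and by $\psi$) are the same on both sides, while $t'\exp X$ has the same reduced characteristic polynomial in $\GL_m(D)$ and in $\GL_n(F)$ for $X\in\gl_2(K)$. Feeding the Jacquet--Langlands relation into the germ expansion of $\theta_{\pi_G}$ at $t'$ and using its uniqueness I would deduce $c_{\theta_{\pi_G},\cO}(t')=(-1)^{n-m}c_{\theta_{\pi_0},\cO}(t')$ for all $\cO\in\Nil(\gl_2(K))$; hence the second term of $m_\geom(\theta_{\pi_G},\chi)$ equals $(-1)^{n-m}\Sigma_2$ with
\[
\Sigma_2=\sum_{[K:F]=n/2}|\Aut_F(K)|^{-1}\int_{F^\times\backslash K^\times}D^{H'}(t')^2\,c_{\theta_{\pi_0},\cO_{t'}}(t')\,(\chi\circ\Nm_{K/F})^{-1}(t')\rd t'.
\]

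Summing over $\Pi_\phi$, the first-term contributions total $\bigl(\sum_{d\mid n}\varphi(d)(-1)^{n/d}\bigr)\Sigma_1$, where $\varphi$ is Euler's totient function (there are $\varphi(d)$ inner forms whose division part has degree $d$), and $\sum_{d\mid n}\varphi(d)(-1)^{n/d}=0$ for even $n$ (split the divisors of $n$ according to the parity of $n/d$ and use $\sum_{d\mid N}\varphi(d)=N$). The second-term contributions occur exactly for the inner forms with $m$ even, that is with $2d\mid n$, equivalently $d\mid n/2$, and they total $\bigl(\sum_{d\mid n/2}\varphi(d)\bigr)\Sigma_2=\tfrac n2\,\Sigma_2$. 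I would then identify $\Sigma_2$ with the geometric multiplicity $m^{\mathrm{Sha}}_{\geom}(\theta_{\pi_0},\chi)$ attached, in the sense of \cite{Wan22}, to the Shalika model $(S(F),\Psi_{\psi,\chi})$ of $\GL_n(F)$: that multiplicity is a sum of exactly this shape over the maximal elliptic tori $K^\times$ of the $\GL_{n/2}(F)$-factor of $S(F)$, the factor $\chi(\det(\cdot))$ of $\Psi_{\psi,\chi}$ restricting to $\chi\circ\Nm_{K/F}$ and the factor $\psi(\tr(\cdot))$ producing the regular-nilpotent germ $c_{\theta_{\pi_0},\cO_{t'}}(t')$. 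Finally, by the Shalika-model analogue of the multiplicity formula (Wan--Zhang), $m^{\mathrm{Sha}}_{\geom}(\theta_{\pi_0},\chi)=\dim\Hom_{S(F)}(\pi_0,\Psi_{\psi,\chi})$, which is at most $1$ by \cite[Theorem A]{CS20} and equals $1$ precisely when $\phi$ takes values in $\GSp_n(\C)$ with similitude factor $\chi|_{F^\times}$ by \cref{hyp:2}; hence $m(\Pi_\phi,\chi)=\tfrac n2\,m^{\mathrm{Sha}}_{\geom}(\theta_{\pi_0},\chi)$ is $\tfrac n2$ in that case and $0$ otherwise.

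The hard part will be the bookkeeping in the middle step: checking that \emph{every} quantity that a priori depends on the inner form $G$ (the Weyl groups, discriminants, measures, and especially the characters $\chi_H$ and $\chi_H|_{H'}$) in fact depends only on the field $L$ or $K$, and transferring the germ coefficients through $\JL$ with compatibly normalised bilinear forms and additive character in $\hat{j}(\cO_{t'},\cdot)$; and, most delicately, pinning down the precise term-by-term identification $\Sigma_2=m^{\mathrm{Sha}}_{\geom}(\theta_{\pi_0},\chi)$, for which one needs the exact definition of the Shalika-model geometric multiplicity.
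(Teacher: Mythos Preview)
Your argument is correct and reaches the same endpoint as the paper, but the organisation is genuinely different. The paper does not sum over the whole packet at once; instead it compares members of $\Pi_\phi$ pairwise. Using the character relation $\chi_{\pi_1}(-1)\theta_{\pi_1}(t_1)=\chi_{\pi_2}(-1)\theta_{\pi_2}(t_2)$ it shows (i) $m(\pi_1,\chi)=m(\pi_2,\chi)$ whenever $\chi_{\pi_1}(-1)=\chi_{\pi_2}(-1)$, and (ii) $m(\pi_1,\chi)+m(\pi_2,\chi)$ equals the generalized Shalika multiplicity of $\pi_1$ (hence of $\pi_0$, by \cite[Theorem~6.1]{BPW19}) whenever the signs differ. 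Since $n/2$ members of $\Pi_\phi$ have each sign, the total is $(n/2)(a+b)$ with $a+b$ the Shalika multiplicity. Your route instead collapses the whole sum via the totient identity $\sum_{d\mid n}\varphi(d)(-1)^{n/d}=0$ and the count $\sum_{d\mid n/2}\varphi(d)=n/2$, which is neat and avoids case analysis. The paper's pairwise statements, however, are strictly stronger: they give exactly the equalities $m_\geom(\pi,\chi)=m_\geom(\pi_0,\chi)$ or $m_\geom(\pi_1,\chi)$ depending on the sign of $\chi_\pi(-1)$, and these are re-used verbatim in the proof of \cref{thm:epsilon}. If you pursue your global-sum approach you will have to redo that work separately.

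Two small points. First, the Shalika multiplicity formula you need is \cite[Theorem~1.4]{BPW19} (Beuzart-Plessis--Wan), not ``Wan--Zhang''; that is also where the precise shape of $m^{\mathrm{Sha}}_\geom$ is written out, so the identification $\Sigma_2=m^{\mathrm{Sha}}_\geom(\theta_{\pi_0},\chi)$ you flag as delicate is literally their formula specialised to $D=F$. Second, the germ transfer $c_{\theta_{\pi_G},\cO_{t'}}(t')=(-1)^{n-m}c_{\theta_{\pi_0},\cO_{t'}}(t')$ is established in the proof of \cite[Theorem~6.1]{BPW19}, so you can cite it rather than rederive it from uniqueness of the germ expansion. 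With these references in place, the bookkeeping you were worried about (invariance of $D^H$, $|W(H,T)|$, $\chi_H|_{L^\times}$, and the torus parametrisation under inner twist) is routine.
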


\begin{proof}
For $\pi_1,  \pi_2\in\Pi_\phi$,  let $G_1$,  $G_2$ be the inner forms of $\GL_n(F)$ such that $\pi_i\in\Irr_\disc(G_i(F))$,  $i=1,  2$.
We can write the character relation of \cref{thm:DKV} as 
    \begin{equation}\label{eq:DKV}
    \chi_{\pi_1}(-1)\theta_{\pi_1}(t_1)=\chi_{\pi_2}(-1)\theta_{\pi_2}(t_2)
    \end{equation}
for  $t_1\in G_{1,  \reg}(F)$ and $t_2\in G_{2,  \reg}(F)$ with $t_1 \leftrightarrow t_2$.
Here,  $t_1 \leftrightarrow t_2$ means that $t_1$ and $t_2$ have the same characteristic polynomial.

Suppose that $\chi_{\pi_1}(-1)=\chi_{\pi_2}(-1)=-1$.
By the Wely's integration formula,  we can rewrite the multiplicity formula \eqref{eq:mult_formula} as 
   \begin{align*}
   m(\pi_1,  \chi)&=\int_{\Gamma_\elliptic(H_1/Z_{G_1})}D^{H_1}(t)\theta_{\pi_1}(t)\chi_{H_1}^{-1}(t) \rd t, \\
   m(\pi_2,  \chi)&=\int_{\Gamma_\elliptic(H_2/Z_{G_2})}D^{H_1}(t)\theta_{\pi_2}(t)\chi_{H_2}^{-1}(t) \rd t,
   \end{align*}
where $H_i=\Cent_{G_i}(E^\times)$ is the symmetrric subgroup of $G_i$ and $\Gamma_\elliptic(H_i/Z_{G_i})$ is the set of elliptic conjugacy classes in $(H_i/Z_{G_i})(F)$.
The correspondence $t_1\leftrightarrow t_2$ above induces a bijection between $\Gamma_\elliptic(H_1/Z_{G_1})$ and $\Gamma_\elliptic(H_2/Z_{G_2})$ which we also write by $t_1\leftrightarrow t_2$.
Since $D^{H_1}(t_1)=D^{H_2}(t_2)$,  $\theta_{\pi_1}(t_1)=\theta_{\pi_2}(t_2)$,  and $\chi_{H_1}^{-1}(t_1)=\chi_{H_2}^{-1}(t_2)$ for all pairs $t_1\leftrightarrow t_2$,  we obtain $m(\pi_1,  \chi)=m(\pi_2,  \chi)$.

The case where $\chi_{\pi_1}(-1)=\chi_{\pi_2}(-1)=1$ is similar.
Let $\Gamma_\elliptic(H'_i/Z_{G_i})$ be the set of elliptic conjugacy classes in $(H'_i/Z_{G_i})(F)$.
Since both $H'_1$ and $H'_2$ are isomorphic to inner forms of $\GL_{n/2}(F)$,  there is a natural bijection between $\Gamma_\elliptic(H'_1/Z_{G_1})$ and $\Gamma_\elliptic(H'_2/Z_{G_2})$ which we also write by $t'_1\leftrightarrow t'_2$.
Note that we have $c_{\pi_1,  \cO_{t'_1}}(t'_1)=c_{\pi_2,  \cO_{t'_2}}(t'_2)$ for $t'_1 \leftrightarrow t'_2$,  which is shown in the proof of \cite[Theorem 6.1]{BPW19}.
Hence the equality $m(\pi_1,  \chi)=m(\pi_2,  \chi)$ follows from the definition of the geometric multiplicity and the multiplicity formula \eqref{eq:mult_formula}.

Suppose that $\chi_{\pi_1}(-1)=-\chi_{\pi_2}(-1)=1$.
From \cref{def:geom_mult1} and \cref{def:geom_mult2},  the multiplicity formula \eqref{eq:mult_formula} and the character relation \eqref{eq:DKV},  we obtain
    \[
    m(\pi_1,  \chi)+m(\pi_2,  \chi)
    =\int_{\Gamma_\elliptic(H'_1/Z_{G_1})}
    D^{H'_1}(t')^2 c_{\pi,  \cO_{t'}}(t')\chi_{H_1}^{-1}(t') \rd t'.
    \]
By \cite[Theorem 1.4]{BPW19},  the right hand side is the multiplicity of the generalized Shalika model of $\pi_1$.
Let $\pi_0\in\Pi_\phi$ be the representation of $\GL_n(F)$.
By \cite[Theorem 6.1]{BPW19},  $\pi_1$ has a non-zero generalized Shalika model if and only if so does $\pi_0$.
Since we assume \cref{hyp:2},  we obtain
    \begin{equation}\label{eq:deduce2}
    m(\pi_1,  \chi)+m(\pi_2,  \chi)=
        \begin{cases}
        1 & \text{if $\phi$ takes values in $\GSp_n(\C)$ with similitude factor $\chi|_{F^\times}$} \\ 
        0 & \text{otherwise}.
        \end{cases}
    \end{equation}
The same argument applies to the case that $\chi_{\pi_1}(-1)=-\chi_{\pi_2}(-1)=-1$.
\end{proof}

In the case $\chi$ is the trivial character,  \cref{conj:reform} follows from \cref{conj:mult_formula}.

\begin{corollary}\label{cor:trivial}
\cref{conj:mult_formula} for the trivial character $\chi=\mathbf{1}$ implies \cref{conj:reform} for $\chi=\mathbf{1}$.
\end{corollary}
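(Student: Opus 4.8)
The plan is to derive from \cref{conj:mult_formula} (with $\chi=\mathbf 1$) the full statement of \cref{conj:PTB} for essentially square integrable representations and $\chi=\mathbf 1$, and then to quote \cref{cor:reduction_disc}. Fix $\phi\in\Phi_\disc(\GL_n(F))$. Then $|\Pi_\phi|=n$ and $S_\phi\cong Z_{\SL_n(\C)}\cong\Z/n\Z$, so $\pi\mapsto\chi_\pi(\zeta_n)$ identifies $\Pi_\phi$ with the group $\mu_n$ of $n$-th roots of unity, and $\chi_\pi(-1)=\chi_\pi(\zeta_n)^{n/2}\in\{\pm1\}$. Since \cref{hyp:2} and \cref{conj:mult_formula} are available, \cref{prop:deduce} applies and gives $m(\Pi_\phi,\mathbf 1)=0$ unless $\phi$ takes values in $\GSp_n(\C)$ with trivial similitude factor; as $m(\pi,\mathbf 1)\le m(\Pi_\phi,\mathbf 1)$, this already yields part (1) of \cref{conj:PTB}. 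Assume from now on that $\phi$ does take values in $\GSp_n(\C)$ with trivial similitude factor; then $\Pi_\phi$ is generic (it contains the discrete series of $\GL_n(F)$ with parameter $\phi$).

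The crucial step is to sharpen the counting statement of \cref{prop:deduce} into a description of which members of $\Pi_\phi$ are distinguished. Set $\Pi_\phi^{\pm}=\{\pi\in\Pi_\phi:\chi_\pi(-1)=\pm1\}$, each of cardinality $n/2$. Re-examining the proof of \cref{prop:deduce} --- which rests on the Jacquet--Langlands character identity \eqref{eq:DKV}, the Weyl integration formula, and the invariance of the relevant nilpotent germ coefficients under Jacquet--Langlands transfer --- one sees that $m(\pi,\mathbf 1)$ takes a single value $a$ on $\Pi_\phi^{+}$ and a single value $b$ on $\Pi_\phi^{-}$, with $a+b=1$. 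Invoking \cref{hyp:1}, which is known for $\chi=\mathbf 1$, forces $a,b\in\{0,1\}$, hence $\{a,b\}=\{0,1\}$: one of the two cosets $\Pi_\phi^{+},\Pi_\phi^{-}$ consists entirely of $(H,\mathbf 1)$-distinguished representations and the other entirely of non-distinguished ones.

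For a parameter with a single symplectic block ($I_\phi=\{i\}$, $\phi_i=\phi$, $n_i=n$), condition \eqref{eq:reform} with $\chi=\mathbf 1$ becomes
\[
\chi_\pi(-1)=\varepsilon\bigl(\phi\otimes\Ind_{W_E}^{W_F}(\mathbf 1)\bigr)\,\eta_{E/F}(-1)^{n/2},
\]
whose right side is a sign independent of $\pi$; hence the set of $\pi\in\Pi_\phi$ satisfying \eqref{eq:reform} is likewise exactly one of $\Pi_\phi^{+},\Pi_\phi^{-}$. It therefore suffices to compare the two descriptions on a single member, and the natural choice is the split member $\pi_0\in\Irr_\disc(\GL_n(F))\cap\Pi_\phi$: the split algebra has Hasse invariant $1$, so $\chi_{\pi_0}(\zeta_n)=1$ and $\pi_0\in\Pi_\phi^{+}$. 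Everything now reduces to the equivalence
\[
\pi_0 \text{ is } (\GL_{n/2}(E),\mathbf 1)\text{-distinguished} \iff \varepsilon\bigl(\phi\otimes\Ind_{W_E}^{W_F}(\mathbf 1)\bigr)=\eta_{E/F}(-1)^{n/2},
\]
which is \cref{conj:PTB}(2) for the split group $\GL_n(F)$ with trivial character; this is known (for $n=2$ it is the Saito--Tunnell dichotomy). Granting it, $\pi_0$ is distinguished exactly when it satisfies \eqref{eq:reform}, so the ``distinguished coset'' coincides with the ``$\varepsilon$-coset''. This is \cref{conj:PTB}(2) for every essentially square integrable $\pi$ with $\chi=\mathbf 1$, and \cref{cor:reduction_disc} then yields \cref{conj:reform} for $\chi=\mathbf 1$.

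The main obstacle is the middle step: one must reopen the proof of \cref{prop:deduce} and check that it delivers not merely $m(\Pi_\phi,\mathbf 1)=n/2$ but the full coset structure of the distinguished set, with the parity $\chi_\pi(-1)=\chi_\pi(\zeta_n)^{n/2}$ correctly matched to the two families of terms in the geometric multiplicity; this is precisely where \cref{hyp:1} is essential, since without multiplicity one the values $a,b$ need not be $0$ or $1$ and the coset dichotomy collapses. By contrast the reduction to the split group in the third step is formal once the coset structure and the shape of \eqref{eq:reform} are in hand.
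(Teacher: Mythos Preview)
Your argument is correct in outline but takes a different route from the paper's, and the external input you invoke is not quite the same as the paper's.

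The paper's proof is shorter: it defines $\Pi_{\phi,\mathrm{dist}}$ and $\Pi_{\phi,\varepsilon}=\{\pi\in\Pi_\phi:\varepsilon(\bc_{E/F}(\phi))=\chi_\pi(-1)\}$, observes $|\Pi_{\phi,\varepsilon}|=n/2$, and then cites \cite{Xue21} for the inclusion $\Pi_{\phi,\mathrm{dist}}\subset\Pi_{\phi,\varepsilon}$ (the ``only if'' direction, valid for \emph{every} inner form). Since \cref{hyp:1} gives $|\Pi_{\phi,\mathrm{dist}}|=m(\Pi_\phi,\mathbf 1)$, and \cref{prop:deduce} gives $m(\Pi_\phi,\mathbf 1)=n/2$, the inclusion is forced to be an equality. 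No coset analysis is needed and no use is made of the split case of the conjecture.

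Your route instead extracts from the \emph{proof} of \cref{prop:deduce} the finer statement that $m(\pi,\mathbf 1)$ is constant on each of $\Pi_\phi^\pm$ with the two values summing to $1$ (this is indeed what that proof shows), and then pins down which coset is distinguished by checking the split member $\pi_0$. The trade-off is in the external input: the paper needs only Xue's one-sided result for all inner forms, whereas you need the full two-sided dichotomy for $\GL_n(F)$. Your parenthetical ``for $n=2$ it is the Saito--Tunnell dichotomy'' is fine, but for general $n$ you give no reference; the available results for the split case (e.g.\ those entering \cref{thm:known_cases}) carry the restriction $p\neq 2$, so as written your argument may not cover $p=2$, while the paper's does since \cite{Xue21} has no such restriction. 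If you want to keep your approach, either supply a reference for the split dichotomy valid at all residue characteristics, or note that the necessity direction of \cite{Xue21} applied to \emph{one} representative in each coset $\Pi_\phi^\pm$ already suffices to identify the distinguished coset---which then collapses your argument back into the paper's.
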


\begin{proof}
By \cite[Theorem 1.4]{Suz21},  the problem reduces to the case of essentially square integrable representations.
Take $\phi\in\Phi_\disc(\GL_n(F))$.
Let $\Pi_{\phi,  \mathrm{dist}}$ be the subset of distinguished elements in $\Pi_\phi$ and set
    \[
    \Pi_{\phi,  \varepsilon}=\{\pi\in\Pi_\phi \mid \varepsilon(\bc_{E/F}(\phi))=\chi_\pi(-1)\}.
    \]
Note that $|\Pi_{\phi,  \varepsilon}|=n/2$.
The ``only if'' part of \cref{conj:reform} (2) for discrete $L$-parameters is proved by Xue \cite{Xue21}.
Hence we have $\Pi_{\phi,  \mathrm{dist}}\subset\Pi_{\phi,  \varepsilon}$.

On the other hand,  $|\Pi_{\phi,  \mathrm{dist}}|=m(\Pi_\phi,  \mathbf{1})$ from \cref{thm:mult_one}.
Recall that \cref{hyp:2} is proved by Matringe \cite{Mat15} when $\chi=\mathbf{1}$. 
Hence it follows from \cref{prop:deduce} that $\Pi_{\phi,  \varepsilon}$ coincides with the set of distinguished elements in $\Pi_\phi$ when $\phi$ takes values in $\Sp_n(\C)$ and $\Pi_{\phi,  \mathrm{dist}}=\emptyset$ otherwise.
This finishes the proof.
\end{proof}

\subsection{A local relative trace formula}
\label{subsec:RTF}

Let $f\in\cC(G(F),  \omega)$ be a strongly cuspidal function.
We define the kernel function $K_f$ by
    \[
    K_f(g_1,  g_2)=\int_{Z_G(F)\bs H(F)}f(g_1^{-1}hg_2)\chi_H^{-1}(h) \rd h,  \qquad g_1,  g_2\in G(F).
    \]
Since the symmetric pair $(G,  H)$ is tempered,  this integral is absolutely convergent.
The distribution $I(f)$ is defined as
    \[
    I(f)=\int_{H(F)\bs G(F)}K_f(g,  g)\rd g.
    \]
Again since $(G,  H)$ is tempered,  from \cite[Theorem 4.1.1]{BP18},  the defining integral of $I(f)$ converges absolutely.

We define the distribution $I_\spec(f)$,  the spectral side of $I(f)$ by
    \[
    I_\spec(f)=\sum_{\substack{\pi\in\Irr_\disc(G(F)) \\ \omega_\pi=\omega}}
    m(\pi,  \chi)\theta_{\pi^\vee}(f).
    \]
When $\chi$ is a unitary character and $f\in\cC(G,  \omega)$ is cuspidal,  we have the spectral expansion $I(f)=I_\spec(f)$,  see \cite[Theorem 3.1.1]{BP18}.

We define the distribution $I_\geom(f)$,  the geometric side of $I(f)$ by $I_\geom(f)=m_\geom(\theta_f,  \chi)$.
The geometric expansion of the local relative trace formula \cite[Conjecture 7.10 (1)]{Wan22} is as follows.

\begin{conjecture}\label{conj:geom_exp}
The geometric expansion 
    \[
    I(f)=I_\geom(f)
    \]
 holds for a strongly cuspidal function $f\in\cC(G(F),  \omega)$.
\end{conjecture}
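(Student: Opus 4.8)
The plan is to follow the now-standard strategy for geometric expansions of local relative trace formulae, originating in Waldspurger's \cite{Wal10} and developed in the symmetric-pair setting by Beuzart-Plessis (\cite{BP18}, \cite{BP20}) and, for linear and generalized Shalika periods, by Beuzart-Plessis--Wan \cite{BPW19}. The starting point is the absolutely convergent expression
\[
I(f)=\int_{H(F)\bs G(F)}\int_{Z_G(F)\bs H(F)}f(g^{-1}hg)\chi_H^{-1}(h)\rd h\rd g,
\]
which converges at every stage because $(G,H)$ is tempered (established above). One first rewrites the inner integral via the Weyl integration formula on $H$ and expresses $f$ through its weighted orbital integrals; since $f$ is strongly cuspidal, $\theta_f$ is a quasi-character by \cite[Corollary 5.9]{Wal10}, and this is the object that will ultimately carry the geometric side.

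Next I would truncate. Introduce Arthur's truncation parameter $T$ and form $I^T(f)$; the usual argument shows that $I^T(f)$ is asymptotic to a polynomial-exponential in $T$ whose constant term recovers $I(f)$, and that all contributions coming from proper parabolic subgroups of $G$ vanish precisely because $f$ is strongly cuspidal. This is the step that converts the global double-coset integral into a sum localized around the relevant semisimple conjugacy classes of $G(F)$ meeting $H(F)$, equivalently around the $H(F)\times H(F)$-orbits on $G(F)$ with semisimple invariant.

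The core is Harish-Chandra descent. Fix a semisimple $x$ that, up to conjugacy, lies in $H(F)$, and localize $f$ on a good neighborhood $\omega_x\subset\fg_x(F)$ of $0$; the pair $(\fg_x,\fh_x)$ is again of the type covered by induction on $\dim G$, so the contribution of $x$ is governed by $m_\geom$ for $(G_x,H_x)$. Two families of terms survive in the limit. When $G_x$ is anisotropic modulo $Z_G$, i.e.\,$x$ is elliptic, the weighted orbital integral $J_{M(x)}(x,f)$ collapses, by the very definition of $\theta_f$, to $D^G(x)^{1/2}\theta_f(x)$, and summing over $\cT_\elliptic(H)$ with the factors $|W(H,T)|^{-1}$ yields exactly the first sum in \cref{def:geom_mult}. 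In the even case there is a second, degenerate family attached to the tori $T'$ of $H'$: here one applies the Fourier transform on $\fg_{t'}(F)\cong\gl_2(K)$ together with the germ expansion $\theta_f(t'\exp X)=\sum_{\cO}c_{\theta_f,\cO}(t')\hat j(\cO,X)$ of the quasi-character $\theta_f$, and only the regular nilpotent orbit $\cO_{t'}$ pairs non-trivially with the relevant invariant distribution on the tangent space to $H'\bs G$ at $t'$; this produces the second sum $\int_{Z_G(F)\bs T'(F)}D^{H'}(t')^2 c_{\theta_f,\cO_{t'}}(t')\chi_H^{-1}(t')\rd t'$. Collecting the two families gives $I(f)=m_\geom(\theta_f,\chi)=I_\geom(f)$.

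The main obstacle I expect is the descent-plus-limit step in the even case: one must justify interchanging limits and integrals after truncation, control the resulting non-compactly-supported integrals (this is where the integrability of $(D^G)^{-1/2}D^H$ from \cite[Lemma B.1.2]{BP20} and the local boundedness of $(D^G)^{1/2}\theta_f$, as used in the convergence lemma above, re-enter), and, most delicately, prove that among the nilpotent orbits of $\fg_{t'}(F)$ only the regular one contributes — the lower-dimensional orbits being killed because their $\hat j(\cO,\cdot)$ fail to be integrable against the $D^{H'}$-type weight or cancel in the symmetrization. A secondary but unavoidable task is careful bookkeeping of signs and measures — the $\chi_\pi(-1)$-type factor from the Jacquet--Langlands character identity of \cref{thm:DKV} and the normalizations of the elliptic tori — so that the geometric side matches \cref{def:geom_mult} on the nose rather than up to a constant.
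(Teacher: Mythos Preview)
The statement you are trying to prove is not a theorem in the paper but an open \emph{conjecture}. The paper records it as \cref{conj:geom_exp} --- a specialization of \cite[Conjecture~7.10~(1)]{Wan22} to the linear-period setting --- and offers no proof. The only thing the paper does with this statement is to show, in the subsequent proposition, that \cref{conj:geom_exp} implies \cref{conj:mult_formula}. So there is no ``paper's own proof'' to compare your attempt against.

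That said, your outline is a faithful summary of the Waldspurger--Beuzart-Plessis--Wan machinery that one would expect to deploy if one set out to prove this conjecture: truncation, Weyl integration on $H$, strong cuspidality to kill parabolic terms, Harish-Chandra descent to $(G_x,H_x)$, and the germ expansion near $t'\in T'(F)$ to isolate the $c_{\theta_f,\cO_{t'}}$ contribution. This is indeed the template of \cite{BPW19} for the generalized Shalika model and of \cite{BP18} for Galois pairs. But your write-up is a sketch of a strategy, not a proof: the hard analytic work --- the precise truncation estimates, the descent in the presence of the character $\chi_H$, and especially the claim that only the regular nilpotent orbit survives in the even case --- is asserted rather than carried out. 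You yourself flag these as the ``main obstacle''; in the existing literature each of these steps occupies dozens of pages, and for the pair $(G,H)$ at hand (with $G$ an inner form of $\GL_n$ and $H$ the centralizer of a quadratic extension) they have not been written down. Until they are, \cref{conj:geom_exp} remains a conjecture, which is exactly how the paper treats it.
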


\begin{proposition}\label{prop:geom_exp}
\cref{conj:geom_exp} implies \cref{conj:mult_formula}.
\end{proposition}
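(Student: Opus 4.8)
The plan is to deduce the multiplicity formula $m(\pi,\chi)=m_\geom(\pi,\chi)$ for $\pi\in\Irr_\disc(G(F))$ from the geometric expansion $I(f)=I_\geom(f)$ by running the usual trace-formula argument: combine the geometric expansion with the spectral expansion $I(f)=I_\spec(f)$ and isolate the contribution of a single representation via a suitable choice of test function $f$. First I would note that $m_\geom(\pi,\chi)=m_\geom(\theta_\pi,\chi)=I_\geom(f)$ whenever $f$ is a strongly cuspidal function whose associated quasi-character $\theta_f$ agrees with $\theta_\pi$ up to the contributions that matter; more precisely, the strategy is to pick a pseudo-coefficient (or a matrix coefficient) $f=f_\pi\in\cC(G(F),\omega)$ for $\pi$, which is cuspidal (hence strongly cuspidal), and for which $\theta_{\pi'}(f_\pi)=0$ for all $\pi'\in\Irr_\disc(G(F))$ with $\omega_{\pi'}=\omega$ except $\pi'=\pi^\vee$, where it equals $1$. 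With such a choice, $I_\spec(f_\pi)=m(\pi,\chi)$, while $I_\geom(f_\pi)=m_\geom(\theta_{f_\pi},\chi)$, so it remains to identify $m_\geom(\theta_{f_\pi},\chi)$ with $m_\geom(\pi,\chi)=m_\geom(\theta_\pi,\chi)$.

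The key step is therefore the identity $\theta_{f_\pi}=\theta_\pi$ as quasi-characters (at least, equality of the germ coefficients $c_{\theta_{f_\pi},\cO}(x)$ and of the values on the relevant elliptic tori that enter $m_\geom$). This is exactly the content of the local trace formula expansion of the character in the sense of Arthur--Waldspurger: for a cuspidal $f$ that is a pseudo-coefficient of $\pi$, the quasi-character $\theta_f$ defined via weighted orbital integrals coincides with the Harish-Chandra character $\theta_{\pi^\vee}$ (or $\theta_\pi$, depending on normalization). I would invoke this as a known result, citing \cite{Wal10} for the relation between strongly cuspidal functions and their quasi-characters together with the standard fact that a pseudo-coefficient of a discrete series has $\theta_f=\theta_{\pi^\vee}$; in the division-algebra setting one may alternatively use that $G(F)=\GL_m(D)$ is an inner form of $\GL_n(F)$, where every element of $\Irr_\disc(G(F))$ is essentially square integrable modulo center and pseudo-coefficients are available.

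Assembling the pieces: given $\pi\in\Irr_\disc(G(F))$ with central character $\omega_\pi=\omega$ (recall $m(\pi,\chi)=0$ unless $\omega_\pi=\chi^{n/2}|_{F^\times}=\omega$, so this is the only case to treat), choose a cuspidal pseudo-coefficient $f_\pi\in\cC(G(F),\omega)$ of $\pi^\vee$. Since $\chi$ is unitary and $f_\pi$ is cuspidal, \cite[Theorem 3.1.1]{BP18} gives $I(f_\pi)=I_\spec(f_\pi)=\sum_{\pi'} m(\pi',\chi)\theta_{(\pi')^\vee}(f_\pi)=m(\pi,\chi)$. On the other hand \cref{conj:geom_exp} gives $I(f_\pi)=I_\geom(f_\pi)=m_\geom(\theta_{f_\pi},\chi)=m_\geom(\theta_{\pi},\chi)=m_\geom(\pi,\chi)$, using $\theta_{f_\pi}=\theta_{\pi}$. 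Comparing yields $m(\pi,\chi)=m_\geom(\pi,\chi)$, which is \cref{conj:mult_formula}.

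The main obstacle I anticipate is the rigorous justification of $\theta_{f_\pi}=\theta_\pi$ together with the fact that this equality is strong enough to feed into $m_\geom$: the geometric multiplicity involves not only values of the quasi-character on elliptic tori of $H$ but also the germ coefficients $c_{\theta,\cO_{t'}}(t')$ on elliptic tori of $H'$ (when $m$ is even). One must ensure that the quasi-character attached to the pseudo-coefficient has exactly the same localized germ expansion as the character of $\pi$ — this is the subtle point where one needs the precise compatibility between pseudo-coefficients, weighted orbital integrals, and the Shalika-germ expansion, as developed by Waldspurger. A secondary technical point is the existence and cuspidality of pseudo-coefficients in the space $\cC(G(F),\omega)$ with the prescribed central character; this is standard for inner forms of $\GL_n$ but should be stated carefully. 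If one prefers to avoid pseudo-coefficients, an alternative is to run the argument with a family of matrix coefficients and take a limit, using the continuity of both sides, but the pseudo-coefficient route is cleaner.
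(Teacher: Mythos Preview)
Your approach is essentially the same as the paper's: isolate $\pi$ in the spectral expansion with a carefully chosen cuspidal test function and match the resulting quasi-character with $\theta_\pi$ on the geometric side. The paper carries this out with a matrix coefficient $f$ of $\pi$ (after first twisting to make $\chi$ and $\omega_\pi$ unitary), invoking \cite[Proposition~2.6.1]{BP18} for the exact identity $\theta_f=d(\pi)^{-1}f(1)\,\theta_\pi$, which immediately yields the germ equality you flagged as the main obstacle; this is precisely the ``matrix coefficient'' alternative you mention at the end, and it sidesteps the normalization bookkeeping of pseudo-coefficients.
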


\begin{proof}
Twisting $\pi$ and $\chi$ by a real unramified character,  we may suppose that $\chi$ and the central character of $\pi$ are unitary.

Let $\pi\in\Irr_\disc(G(F))$ with central character $\omega_\pi=\omega$.
Take a matrix coefficient $f$ of $\pi$ so that $f(1)\neq0$.
Then $f\in\cC(G(F),  \omega)$ is cuspidal and we have \cite[Proposition 2.6.1]{BP18}
    \[
    \theta_f(x)=d(\pi)^{-1}f(1)\theta_\pi(x)
    \]
for all $x\in G_\reg(F)$.
Here,  $d(\pi)$ is the formal degree of $\pi$.
From this,  we also obtain
    \[
    c_{f,  \cO_{t'}}(h')=d(\pi)^{-1}f(1)c_{\pi,  \cO_{t'}}(h')
    \]
for all $h'\in H'_\reg(F)$,  $T'\in\cT_\elliptic(H')$ and $t'\in T'(F)\cap G_\reg(F)$.
Hence we have $I_\geom(f)=d(\pi)^{-1}f(1)m_\geom(\pi,  \chi)$.

On the other hand,  by the Schur orthogonality relations,  we obtain
    \[
    I_\spec(f)=m(\pi,  \chi)\theta_{\pi^\vee}(f)=d(\pi)^{-1}f(1)m(\pi,  \chi),
    \]
where the second equality is a direct consequence of the definition of $d(\pi)$.
Now the multiplicity formula \eqref{eq:mult_formula} follows from \cref{conj:geom_exp}.
\end{proof}

\section{An expression for the root number}
\label{sec:root_number}

Recall that $E/F$ is a quadratic extension of non-archimdean local fields of characteristic zero.
Let $c$ be the non-trivial $F$-automorphism of $E$.
We write the norm and trace map by $\Nm_{E/F}$ and $\tr_{E/F}$,  respectively.
Fix a non-tirivial additive character $\psi$ of $F$ and set $\psi_E=\psi\circ\tr_{E/F}$.

\subsection{Base change lift}
\label{subsec:base_change}

In this subsection,  we may remove the condition that $n$ is even.
Let $\bfG=\Res_{E/F}\GL_n$,  where $\Res_{E/F}$ denotes the Weil restriction of scalars and $Z_\bfG$ be the center of $\bfG$.

We define an involution $\tau$ of $\bfG(F)$ by $\tau(g)=wg^cw^{-1}$,  where
    \[
    w=\renewcommand{\arraystretch}{0.8}
        \begin{pmatrix}
        &&1 \\
        &\idots& \\
        1&&
        \end{pmatrix}
    \in \bfG(F).
    \]
For a representation $\pi'$ of $\bfG(F)$,  we define representations $\pi'^c$ and $\pi'\circ\tau$ of $\bfG(F)$ on the same representation space as $\pi'$,  by putting $\pi'^c(g)=\pi'(g^c)$ and $\pi'\circ\tau(g)=\pi'(\tau(g))$.
If $\pi'$ is irreducible,  let $\omega_{\pi'}$ denote its central character.

Let $\bfN$ be the subgroup of upper unipotent matrices of $\bfG$.
We write by $\psi_E$ the non-degenerate character $u\mapsto\psi_E(\sum_{i=1}^{n-1}u_{i,i+1})$ of $\bfN(F)$.
Let $\Irr_\gen(\bfG(F))$ be the set of generic irreducible representations of $\bfG(F)$. 
For $\pi'\in\Irr_\gen(\bfG(F))$,  we denote its Whittaker model by $\cW(\pi',  \psi_E)$.
Then $\bfG(F)$ acts on $\cW(\pi',  \psi_E)$ via right translation,  which we write by $R$.
For $W\in\cW(\pi',  \psi_E)$,  we define the operator $\tilde{\pi}'(\tau)$ by
    \[
    (\tilde{\pi}'(\tau)W)(x)=W(x^cw),  \qquad x\in \bfG(F).
    \]
Then $R(g)(\tilde{\pi}'(\tau)W)(x)=W(x^cg^cw)=W(x^cw\tau(g))=\tilde{\pi}'(\tau)((R\circ\tau)(g)W)(x)$ for $g\in \bfG(F)$.

Fix a non-zero elements $\phi\in\Hom_N(\pi',  \psi_E)$ and $\phi^\vee\in\Hom_N(\pi'^\vee,  \psi_E^{-1})$.
For $v\in\pi'$ and $v^\vee\in\pi'^\vee$,  we define $W_v\in\cW(\pi',  \psi_E)$ and $W_{v^\vee}\in\cW(\pi'^\vee,  \psi_E^{-1})$ by $W_v(g)=\phi(\pi'(g)v)$ and $W_{v^\vee}(g)=\phi^\vee(\pi'^\vee(g)v^\vee)$ for $g\in \bfG(F)$.

By the uniqueness of Whittaker models,  we have 
    \[
    \cW(\pi'\circ\tau,  \psi_E)=\{ \tilde{\pi}'(\tau)W \mid W\in\cW(\pi',  \psi_E)\}.
    \]
Suppose that $\pi'\cong\pi'^c$,  or equivalently  $\pi'\cong\pi'\circ\tau$.
Then $\cW(\pi',  \psi_E)=\cW(\pi'\circ\tau,  \psi_E)$ and $\tilde{\pi}'(\tau)$ is an involution on $\cW(\pi',  \psi_E)$.

Let $\bfG^+=\bfG\rtimes\{1,  \tau\}$ be a non-connected algebraic group and $\tilde{\bfG}=\bfG\tau$ its connected component.
For $\pi'\in\Irr_\gen(\bfG(F))$ satisfying $\pi'\cong\pi'^c$,  we extend it to a representation $\pi'^+$ of $\bfG^+(F)$ by setting $\pi'^+(g\tau)=\pi'(g)\tilde{\pi}'(\tau)$ for $g\in\bfG(F)$.
Let $\tilde{\pi}'=\pi'^+|_{\tilde{\bfG}(F)}$ be a twisted representation of $\tilde{\bfG}(F)$ in the sense of \cite{Wal12b}.

We summarize the facts about base change lifting for representations of $\GL_n(F)$.
For details,  see \cite[Chapter 1]{AC89}.

For each $\tilde{g}\in\tilde{\bfG}(F)$,  $\tilde{g}^2$ is $\bfG(F)$-conjugate to an element of $\GL_n(F)$,  which is unique up to $\GL_n(F)$-conjugate.
We write such an element of $\GL_n(F)$ by $\cN(\tilde{g})$.
We say that $\tilde{g}\in\tilde{\bfG}(F)$ is regular semisimple (resp.\,elliptic) if $\cN(\tilde{g})\in\GL_{n,  \reg}(F)$ (resp.\,$\cN(\tilde{g})\in\GL_{n,  \elliptic}(F)$). 
Let $\tilde{\bfG}_\reg(F)$ and (resp.\,$\tilde{\bfG}_\elliptic(F)$) be the set of regular semisimple (resp.\,elliptic) elements in $\tilde{\bfG}(F)$.
Note that $\tilde{\bfG}_\reg(F)\tau^{-1}$ (resp.\,$\tilde{\bfG}_\elliptic(F)\tau^{-1}$) is the set of $c$-regular semisimple (resp.\,$c$-elliptic) elements in $\bfG(F)$ in the sense of \cite{AC89}.
For a representation $\pi'$ of $\bfG(F)$ satisfying $\pi'\cong\pi'^c$,  the twisted character $\Theta_{\tilde{\pi}'}$ is defined by $\Theta_{\tilde{\pi}'}(\tilde{g})=\tr(\tilde{\pi}'(\tilde{g}))$ for $\tilde{g}\in\tilde{\bfG}_\reg(F)$.

\begin{theorem}[\cite{AC89} Chapter 1,  Theorem 6.2]
There is a unique map $\bc_{E/F}\,\colon \Irr_\disc(\GL_n(F))\to\Irr(\bfG(F))$ such that for $\pi\in\Irr_\disc(\GL_n(F))$,  $\pi':=\bc_{E/F}(\pi)$ is an essentially tempered representation which satisfies $\pi'\cong\pi'^c$ and 
    \begin{equation}\label{eq:AC}
    \Theta_{\tilde{\pi}'}(\tilde{g})=\theta_\pi(\cN(\tilde{g})),  \qquad \tilde{g}\in\tilde{\bfG}_\reg(F).
    \end{equation}
\end{theorem}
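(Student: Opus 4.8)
The statement is the quadratic case of the cyclic base change theorem of Arthur and Clozel, so the plan is to follow their proof; in an expository article one would simply cite \cite[Ch.~1, Thm.~6.2]{AC89}, but the strategy is as follows.

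First I would dispose of uniqueness, which is formal. If $\pi'_1$ and $\pi'_2$ both satisfy the conclusion for a fixed $\pi\in\Irr_\disc(\GL_n(F))$, then as locally integrable functions on $\tilde{\bfG}(F)$ the twisted characters $\Theta_{\tilde{\pi}'_1}$ and $\Theta_{\tilde{\pi}'_2}$ agree on the dense open subset $\tilde{\bfG}_\reg(F)$, on which both equal $\tilde{g}\mapsto\theta_\pi(\cN(\tilde{g}))$, hence they agree as distributions on $\tilde{\bfG}(F)$. Linear independence of the twisted characters of pairwise inequivalent $\tau$-stable representations then forces $\pi'_1\cong\pi'_2$.

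For existence I would globalize. Choose a number field $k$ with a finite place $v_0$ such that $k_{v_0}\cong F$, together with a quadratic extension $\ell/k$ which is non-split at $v_0$ with $\ell\otimes_k k_{v_0}\cong E$. A standard globalization (via the simple trace formula, pinning the local component at $v_0$ to be $\pi$ and imposing a supercuspidal component at an auxiliary place to force cuspidality) produces a cuspidal automorphic representation $\Pi$ of $\GL_n(\A_k)$ with $\Pi_{v_0}\cong\pi$. Applying the global base change lift of \cite[Ch.~3]{AC89} yields an automorphic representation $\Pi_\ell$ of $\GL_n(\A_\ell)$ which is $\Gal(\ell/k)$-stable and is either cuspidal or an isobaric sum of two $c$-conjugate cuspidal representations, in particular with essentially tempered components at every place; set $\bc_{E/F}(\pi):=(\Pi_\ell)_{v_0}$. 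Writing $\pi':=(\Pi_\ell)_{v_0}$, it is $c$-stable and essentially tempered. (The essential temperedness can also be seen purely locally, from compatibility of base change with the Bernstein--Zelevinsky classification: if $\rho$ is supercuspidal then $\bc_{E/F}(\mathrm{St}_a(\rho))$ is $\mathrm{St}_a(\bc_{E/F}(\rho))$ when $\rho\not\cong\rho\otimes\eta_{E/F}$, and an isobaric sum of two $c$-conjugate pieces $\mathrm{St}_a(\sigma)$, $\mathrm{St}_a(\sigma^c)$ otherwise.)

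It remains to establish the twisted character identity \eqref{eq:AC} and to deduce that $\pi'$ depends only on $\pi$. For this I would compare the $\tau$-twisted trace formula for $\Res_{\ell/k}\GL_n$ with the trace formula for $\GL_n$ over $k$. The geometric sides match through the base change transfer of orbital integrals — the twisted orbital integral of a test function equals the orbital integral, at the norm, of its base change transfer — which rests on the base change fundamental lemma for the full spherical Hecke algebra, known for $\GL_n$ by \cite{AC89}, extending Kottwitz's computation at the unit element. Varying the global test function and invoking linear independence of (twisted) characters to isolate the place $v_0$ then yields $\Theta_{\tilde{\pi}'}(\tilde{g})=\theta_\pi(\cN(\tilde{g}))$ for all $\tilde{g}\in\tilde{\bfG}_\reg(F)$; combined with the uniqueness paragraph, this shows that $(\Pi_\ell)_{v_0}$ is independent of the auxiliary global choices. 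The genuinely hard inputs, and the main obstacles, are the base change fundamental lemma (needed to identify the two geometric sides) and the local control at $v_0$ — that the lift is globalization-independent and essentially tempered; by contrast the globalization itself and the abstract spectral expansion of the twisted trace formula are, for $\GL_n$, comparatively soft.
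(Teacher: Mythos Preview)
The paper does not prove this theorem at all: it is stated with attribution to \cite[Ch.~1, Thm.~6.2]{AC89} and then used as a black box. So there is nothing to compare your argument against in the paper itself; your proposal goes well beyond what the paper does, and in an expository article a bare citation is indeed all that is expected here.

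As a sketch of the Arthur--Clozel proof your outline is broadly accurate, but one point deserves care. You invoke the \emph{global} base change lift from \cite[Ch.~3]{AC89} to construct the local lift at $v_0$; this is circular as stated, since the global theory in Chapter~3 rests on the local results of Chapter~1, including the very theorem you are trying to prove. In Arthur--Clozel the logic runs the other way: one first establishes local base change for supercuspidals via a direct comparison of simple trace formulas (choosing test functions that isolate a single supercuspidal at $v_0$ and are supported on the elliptic set, so that only the elliptic/discrete terms survive), then extends to arbitrary discrete series by compatibility with the Bernstein--Zelevinsky classification, and only afterwards assembles the global lift. Your paragraph on uniqueness and your identification of the fundamental lemma and orbital-integral transfer as the hard inputs are correct.
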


For $\pi\in\Irr_\disc(\GL_n(F))$,  set $\phi_\pi=\rec_{\GL_n(F)}(\pi)$,  $\pi'=\bc_{E/F}(\pi)$ and $\phi_{\pi'}=\rec_{\bfG(F)}(\pi')$.
By \cite[Lemma V\hspace{-.1em}I\hspace{-.1em}I.2.4]{HT01},  we have $\phi_{\pi'}=\bc_{E/F}(\phi_\pi)$.

\subsection{ An integral formula}
\label{subsec:integral_formula}

Assume that $n$ is even.
Let $\bfH$ be the centralizer of $\diag(1,  -1,  \ldots,  1,  -1)\in\bfG$.
We take an explicit isomorphism $\bfH\cong\Res_{E/F}(\GL_{n/2}\times\GL_{n/2})$ as follows.
Let $\sigma\in \bfG$ be the permutation matrix 
    \[
    \sigma=
        \begin{pmatrix}
        1 & 2 & \cdots & n/2 & n/2+1 & n/2+2 & \cdots & n \\
        1 & 3 & \cdots & n-1 & 2 & 4 & \cdots & n
        \end{pmatrix}.
    \]
Since $\diag(1,  -1,  \ldots,  1,  -1)=\sigma\diag(1_{n/2},  -1_{n/2})\sigma^{-1}$,  we have an isomorphism 
    \[
    \Res_{E/F}(\GL_{n/2}\times\GL_{n/2})\xrightarrow{\sim} \bfH;  \ (x_1,  x_2) \mapsto \sigma\diag(x_1,  x_2)\sigma^{-1}.
    \]
We set $\mathbf{h}(x_1,  x_2)=\sigma\diag(x_1,  x_2)\sigma^{-1}$.

Let $\bfH'$ be the subgroup of $\bfH$ given by
    \[
    \bfH'=\{\mathbf{h}(A,  A) \mid A\in\Res_{E/F}\GL_{n/2}\}.
    \]
Then $\bfH'\cong\Res_{E/F}\GL_{n/2}$.
Note that $\bfH$ and $\bfH'$ are $\tau$-stable and hence we can define the non-connected algebraic groups $\bfH^+=\bfH\rtimes\{1,  \tau\}$ and $\bfH'^+=\bfH'\rtimes\{1,  \tau\}$.
Set $\tilde{\bfH}=\bfH\tau$ and $\tilde{\bfH'}=\bfH'\tau$.

Recall that $\chi$ is a character of $E^\times$.
We define the character $\chi_\bfH$ of $\bfH(F)$ by 
    \[
    \chi_\bfH(\mathbf{h}(x_1,  x_2))=\chi(\det(x_1x_2^c)),  \qquad x_1,  x_2\in\GL_n(E).
    \]
Note that $\chi_\bfH(\tau(h))=\chi_\bfH(h)$ for all $h\in \bfH(F)$.
Hence we can extend $\chi_\bfH$ to a character of $\bfH^+$,  which is also denoted by $\chi_\bfH$,  by letting $\chi_\bfH(\tau)=1$.

We fix a set $\cT_\elliptic(\tilde{\bfH})$ (resp.\,$\cT_\elliptic(\tilde{\bfH}')$) of representatives of $\bfH(F)$-cnjugacy classes ($\bfH'(F)$-conjugcay classes) of $\tau$-stable maximal elliptic tori in $\bfH$ (resp.\,$\bfH'$).

For $\bfT\in\cT_\elliptic(\tilde{\bfH})$,  set $\tilde{\bfT}=\bfT\tau$ and let $\tilde{\bfT}(F)_{/\tau}$ be the set of $\bfT(F)$ be the set of $\bfT(F)$-conjugacy classes in $\tilde{\bfT}(F)$.
We set $W(\bfH,  \tilde{\bfT})=N_{\bfH(F)}(\tilde{\bfT})/\bfT(F)$,  where $N_{\bfH(F)}(\tilde{\bfT})$ is the normalizer of $\tilde{\bfT}$ in $\bfH(F)$.
Similarly for $\bfT'\in\cT_\elliptic(\tilde{\bfH}')$,  set $\tilde{\bfT}'=\bfT'\tau$ and let $\tilde{\bfT}'(F)_{/\tau}$ be the set of $\bfT'(F)$-conjugacy classes in $\tilde{\bfT}'(F)$.
We set $W(\bfH',  \tilde{\bfT}')=N_{\bfH'(F)}(\tilde{\bfT}')/\bfT'(F)$.

Take $\delta\in E\setminus F$ so that $\delta^2\in F$.
Let $H_0$ be the subgroup of $\GL_n(F)$ which consists of invertible matrices of the form
    \[
        h(A, B):=\begin{pmatrix}
        A & \delta^2 B \\
        B & A
        \end{pmatrix},  \quad A,  B\in \M_{n/2}(F).
    \]
Note that $H_0(F)\cong\GL_{n/2}(E)$.
Let $H'_0$ be the subgroup of $H_0$ consisting of $h(A,  \bzero_{n/2})$ with $A\in\GL_{n/2}(F)$.
Then $H'_0(F)\cong\GL_{n/2}(F)$.
We write the set of elliptic conjugacy classes in $(H_0/Z_{\GL_n})(F)$ (resp.\,$(H'_0/Z_{\GL_n})(F)$) by $\Gamma_\elliptic(H_0/Z_{\GL_n})$ (resp.\,$\Gamma_\elliptic(H'_0/Z_{\GL_n})$).

For $\bfT\in\cT_\elliptic(\tilde{\bfH})$,  there is an extension $L/E$ of degree $n/2$ such that $\bfT(F)\cong L^\times\times L^\times \cong (L\otimes_FE)^\times$.
Let $T\in\cT_\elliptic(H_0)$ be such that $T(F)\cong L^\times$.
The map $\tilde{g}\mapsto\cN(\tilde{g})$ induces a bijection from $\tilde{\bfT}_\elliptic(F)/W(\bfH,  \tilde{\bfT})$ to $T_\elliptic(F)/W(H_0,  T)$,  where $\tilde{\bfT}_\elliptic(F)=\bfT(F)\cap\tilde{\bfH}_\elliptic(F)$ and $T_\elliptic(F)=T(F)\cap H_{0,  \elliptic}(F)$.
Let $\Gamma_\elliptic(\tilde{\bfH}/Z_\bfG)$ be the set of elliptic conjugacy classes in $(\tilde{\bfH}/Z_\bfG)(F)$.
Then we have a bijection between $\Gamma_\elliptic(\tilde{\bfH}/Z_\bfG)$ and $\Gamma_\elliptic(H_0/Z_{\GL_n})$.
We write this correspondence as $\tilde{t}\leftrightarrow t$. 

For $\bfT'\in\cT_\elliptic(\tilde{\bfH}')$,  there is an extension $K'/E$ of degree $n/2$ such that $\bfT'(F)\cong K'^\times$.
Since $\bfT'$ is $\tau$-stable,  $K'$ is stable under $c\in\Gal(E/F)$.
Let $K\subset K'$ be the fixed field of $c$,  which is a degree $n/2$ extension of $F$ and $K'\cong K\otimes_FE$.
Let $T'\in\cT_\elliptic(H'_0)$ be such that $T'(F)\cong K^\times$.
Then the map $\tilde{g}\mapsto\cN(\tilde{g})$ induces a bijection from $\tilde{\bfT}'_\elliptic(F)/W(\bfH',  \tilde{\bfT'})$ to $T'_\elliptic(F)/W(H'_0,  T')$,  where $\tilde{\bfT}'_\elliptic(F)=\bfT'(F)\cap\tilde{\bfH}'_\elliptic(F)$ and $T'_\elliptic(F)=T'(F)\cap H'_{0,  \elliptic}(F)$.
Let $\Gamma_\elliptic(\tilde{\bfH}'/Z_\bfG)$ be the set of elliptic conjugacy classes in $(\tilde{\bfH}'/Z_\bfG)(F)$.
Then we have a bijection between $\Gamma_\elliptic(\tilde{\bfH}'/Z_\bfG)$ and $\Gamma_\elliptic(H'_0/Z_{\GL_n})$.
We write this correspondence as $\tilde{t}'\leftrightarrow t'$. 

For $\tilde{t}'\in\tilde{\bfT}'(F)\cap\tilde{\bfG}_\reg(F)$,  let $\bfG_{\tilde{t}'}$ (resp.\,$\bfH_{\tilde{t}'}$) be the centralizer of $\tilde{t}'$ in $\bfG$ (resp.\,$\bfH$).
Then $\bfG_{\tilde{t}'}(F)\cong\GL_2(K')$ and $\bfH_{\tilde{t}'}(F)\cong K'^\times\times K'^\times$.
Let $\fg_{\tilde{t}'}$ be the Lie algebra of $\bfG_{\tilde{t}'}$ and $\cO_{\tilde{t}'}\in\Nil(\fg_{\tilde{t}'}(F))$ the unique regular nilpotent orbit in $\fg_{\tilde{t}'}(F)$.

Set $\omega=(\chi\circ\Nm_{E/F})^{\frac{n}{2}}$ and regard it as a character of $Z_\bfG(F)\cong E^\times$.

\begin{definition}
For a quasi-character $\Theta$ on $\tilde{\bfG}(F)$ with central character $\omega$,  we set
    \begin{align*}
    \varepsilon_\geom(\Theta,  \chi)&=2\sum_{\tilde{\bfT}\in\cT_\elliptic(\tilde{\bfH})} |W(\bfH , \tilde{\bfT})|^{-1} 
    \int_{Z_\bfG(F)\bs \tilde{\bfT}(F)_{/\tau}}D^{\tilde{\bfH}}(\tilde{t}) \Theta(\tilde{t}) \rd \tilde{t} \\
    & \qquad +\sum_{\bfT'\in\cT_\elliptic(\tilde{\bfH}')} |W(\bfH',  \tilde{\bfT}')|^{-1} 
    \int_{Z_\bfG(F)\bs\tilde{\bfT}'(F)_{/\tau}}D^{\tilde{\bfH}'}(\tilde{t}')^2 
    c_{\Theta , \cO_{\tilde{t}'}}(\tilde{t}') \rd \tilde{t}'
    \end{align*}
For $\pi'\in\Irr_\disc(\bfG(F))$ with $\omega_{\pi'}=\omega$ such that $\pi'\cong\pi'^c$,  we set
    \[
    \varepsilon_\geom(\pi',  \chi)=\varepsilon_\geom(\Theta_{\tilde{\pi}'},  \chi),
    \]
where $\Theta_{\tilde{\pi}'}$ is the twisted character of $\tilde{\pi}'$.
\end{definition}

\begin{conjecture}\label{conj:epsilon}
Suppose $\pi'\in\Irr_\disc(\bfG(F))$ satisfies $\omega_{\pi'}=\omega$ and $\pi'\cong\pi'^c$ and $\rec_{\bfG(F)}(\pi')$ takes values in $\GSp_n(\C)$ with similitude factor $\chi\circ\Nm_{E/F}$.
Then we have
    \[
    \varepsilon_\geom(\pi',  \chi)= \varepsilon(\pi',  \chi),
    \]
where we set $\varepsilon(\pi',  \chi)=\varepsilon(\frac12,  \pi'\otimes\chi^{-1},  \psi_E)\chi(-1)^{\frac{n}{2}}$.
\end{conjecture}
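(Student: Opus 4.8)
The plan is to adapt the strategy Waldspurger developed in \cite{Wal12b} for the Gan--Gross--Prasad epsilon factor, with his twisted group replaced by the twisted space $\tilde{\bfG}(F)$ and his subgroup by $\bfH(F)$ equipped with the character $\chi_\bfH$. First I would set up a \emph{twisted local relative trace formula} attached to $(\tilde{\bfG},\bfH,\chi_\bfH)$: for a twisted strongly cuspidal function $f$ on $\tilde{\bfG}(F)$ with central character $\omega$, form a kernel $K_f$ by integrating $f$ over $Z_\bfG(F)\bs\bfH(F)$ against $\chi_\bfH^{-1}$ (interpreted in the twisted space as in \cite{Wal12b}), and then the distribution $J(f)=\int_{\bfH(F)\bs\tilde{\bfG}(F)}K_f(\tilde{g},\tilde{g})\,\rd\tilde{g}$. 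Absolute convergence of all these integrals should follow, as in \cref{subsec:RTF}, from the temperedness of the symmetric pair together with the majorizations of \cite{BP18}; one also needs the twisted counterpart of temperedness for $(\bfG,\bfH)$, which reduces to the vanishing of a modulus character exactly as in the temperedness lemma of \cref{sec:preliminary}.

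The core of the argument is to prove the geometric and spectral expansions of $J(f)$. For the \emph{geometric side}, attach to $f$ its twisted quasi-character $\Theta_f$ (built from twisted weighted orbital integrals on $\tilde{\bfG}(F)$, in the spirit of \cref{sec:quasi-characters}), localize $K_f$ near the twisted-semisimple points of $\bfH(F)$, and expand using the germ expansions of twisted quasi-characters. The twisted-semisimple contributions should organize into precisely the two families appearing in the definition of $\varepsilon_\geom$: the toral terms indexed by $\tilde{\bfT}\in\cT_\elliptic(\tilde{\bfH})$, and the terms indexed by $\tilde{\bfT}'\in\cT_\elliptic(\tilde{\bfH}')$ whose ``unipotent part'' lives in $\bfG_{\tilde{t}'}(F)\cong\GL_2(K')$ and contributes the coefficient $c_{\Theta_f,\cO_{\tilde{t}'}}(\tilde{t}')$. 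This yields $J(f)=\varepsilon_\geom(\Theta_f,\chi)$, the factor $2$ on the toral family arising, as in \cite{Wal12b}, from the norm correspondence $\tilde{g}\mapsto\cN(\tilde{g})$. For the \emph{spectral side}, a truncation argument in the style of \cite{Wal10,Wal12b,BP18}, using that only $\tau$-stable discrete series of $\bfG(F)$ --- i.e.\ base changes $\pi'=\bc_{E/F}(\pi)$ --- contribute, should give
    \[
    J(f)=\sum_{\substack{\pi'\in\Irr_\disc(\bfG(F)),\ \omega_{\pi'}=\omega\\ \pi'\cong\pi'^c}} m(\tilde\pi',\chi_\bfH)\,\Theta_{\tilde\pi'^{\vee}}(f),
    \]
where $m(\tilde\pi',\chi_\bfH)$ is the twisted linear-period multiplicity, namely the trace of the operator $\tilde\pi'(\tau)$ on $\Hom_{\bfH(F)}(\pi',\chi_\bfH)$; by multiplicity one for the linear period this space is at most one-dimensional, so $m(\tilde\pi',\chi_\bfH)\in\{0,1,-1\}$, consistently with the final answer being a sign, and $\Theta_{\tilde\pi'^{\vee}}$ denotes the twisted character of $(\pi')^\vee$.

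Granting both expansions, I would apply the trace formula to a twisted cuspidal test function attached to a single $\pi'=\bc_{E/F}(\pi)$ (a twisted matrix coefficient when $\pi'$ is cuspidal on $\GL_n(E)$, a twisted pseudo-coefficient otherwise), exactly as in the proof of the proposition in \cref{subsec:RTF}: the formal-degree factors cancel and one obtains $\varepsilon_\geom(\pi',\chi)=m(\tilde\pi',\chi_\bfH)$. It then remains to evaluate this twisted multiplicity. Under the running hypothesis that $\rec_{\bfG(F)}(\pi')$ takes values in $\GSp_n(\C)$ with similitude $\chi\circ\Nm_{E/F}$ --- equivalently, by \cref{hyp:2}, that $\pi$ has a Shalika model with respect to $\chi|_{F^\times}$ --- one studies the associated Friedberg--Jacquet / Jacquet--Shalika local zeta integral; its convergence and functional equation should simultaneously show that $\Hom_{\bfH(F)}(\pi',\chi_\bfH)$ is one-dimensional and identify the eigenvalue of $\tilde\pi'(\tau)$ on it with the local constant of that functional equation, which equals $\varepsilon(\tfrac12,\pi'\otimes\chi^{-1},\psi_E)$ up to the explicit sign $\chi(-1)^{\frac{n}{2}}$. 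This gives $m(\tilde\pi',\chi_\bfH)=\varepsilon(\tfrac12,\pi'\otimes\chi^{-1},\psi_E)\chi(-1)^{\frac{n}{2}}$, hence the asserted equality $\varepsilon_\geom(\pi',\chi)=\varepsilon(\pi',\chi)$. Alternatively this last step can be carried out by globalizing $\pi$ to a cuspidal automorphic representation carrying a Shalika period and comparing a global twisted relative trace formula with the global functional equation.

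\textbf{The main obstacle} is establishing the geometric and spectral expansions of the twisted local relative trace formula in this generality: this demands developing the theory of twisted quasi-characters on $\tilde{\bfG}(F)$, their germ expansions near arbitrary twisted-semisimple elements, and the delicate convergence and truncation estimates underpinning the whole asymptotic analysis --- in effect a twisted analogue of the program of \cite{Wal10,Wal12b,BP18}. A second, still substantial, difficulty is the precise bookkeeping of normalizing constants in passing from the residual spectral term to the epsilon factor, in particular the origin of the sign $\chi(-1)^{\frac{n}{2}}$ in the twisted local functional equation of the linear-period integral.
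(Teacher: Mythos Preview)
Your proposal is essentially the approach the paper itself takes in \cref{sec:twistedLRTF}: set up the twisted local relative trace formula for $(\tilde{\bfG},\bfH,\chi_\bfH)$, identify its geometric side with $\varepsilon_\geom(\Theta_{\tilde f},\chi)$ (stated as \cref{conj:geom_exp2}), compute the spectral side via the Friedberg--Jacquet functional equation (this is \eqref{eq:const} and \cref{cor:spec}, proved under \cref{hyp:3} and \cref{hyp:4}), and then specialize to a twisted matrix coefficient of $\tilde\pi'$ to conclude. The paper in fact carries out the spectral expansion and the final specialization unconditionally modulo the stated hypotheses, so the only remaining gap is exactly the geometric expansion you flag as the main obstacle; note also that your ``twisted multiplicity'' $m(\tilde\pi',\chi_\bfH)$ is precisely the paper's $\varepsilon(\pi',\chi)\cdot\tilde m(\pi',\chi)$, the factorization coming from \eqref{eq:const}.
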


Note that we have $\varepsilon(\frac12,  \pi'\otimes\chi^{-1},  \psi_E)=\varepsilon(\rec_{\bfG(F)}(\pi')\otimes\chi^{-1})$ by \cite{Hen02}.

\begin{theorem}\label{thm:epsilon}
Take $\pi\in\Irr_\disc(G(F))$.
Let $\pi_0=\JL(\pi)\in\Irr_\disc(\GL_n(F))$ be the Jacquet-Langlands transfer of $\pi$ to $\GL_n(F)$ and $\pi'=\bc_{E/F}(\pi_0)\in\Irr(\bfG(F))$ the base change of $\pi_0$ to $\bfG(F)$.
Assume that 
\begin{itemize}
\item \cref{hyp:2},  \cref{conj:mult_formula} and \cref{conj:epsilon} hold.
\item $\pi'\in\Irr_\disc(\bfG(F))$.
\end{itemize}
Then we have 
    \[
    m(\pi,  \chi)=\frac12(1+\chi_\pi(-1)\varepsilon(\pi',  \chi))
    \]
if $\rec_{\bfG(F)}(\pi')$ takes values in $\GSp_n(\C)$ with similitude factor $\chi\circ\Nm_{E/F}$ and  $m(\pi,  \chi)=0$ otherwise.
\end{theorem}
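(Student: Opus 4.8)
The plan is to evaluate $m(\pi,\chi)$ by first reducing to the member $\pi_0=\JL(\pi)$ of the packet $\Pi_\phi$ lying on $\GL_n(F)$, where $\phi=\rec_{G(F)}(\pi)\in\Phi_\disc(\GL_n(F))$, and then computing $m(\pi_0,\chi)$ by matching the geometric quantity $m_\geom(\theta_{\pi_0},\chi)$ against $\varepsilon_\geom(\Theta_{\tilde{\pi}'},\chi)$ via the base change character identity \eqref{eq:AC}. First, the degenerate case: since $\rec_{\bfG(F)}(\pi')=\bc_{E/F}(\phi)$, the Remark following \cref{conj:reform} shows that $\rec_{\bfG(F)}(\pi')$ takes values in $\GSp_n(\C)$ with similitude factor $\chi\circ\Nm_{E/F}$ if and only if $\phi$ takes values in $\GSp_n(\C)$ with similitude factor $\chi|_{F^\times}$. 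If this fails, \cref{prop:deduce} gives $m(\Pi_\phi,\chi)=0$, hence $m(\pi,\chi)=0$. So from now on assume $\phi$ takes values in $\GSp_n(\C)$ with similitude factor $\chi|_{F^\times}$; then $\det\phi=\chi^{n/2}|_{F^\times}$ gives $\omega_\pi=\omega_{\pi_0}=\omega$, so the geometric distributions below are defined.

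\emph{Reduction to $\pi_0$.} Running the argument in the proof of \cref{prop:deduce} with $\pi_1=\pi$ and $\pi_2=\pi_0$, and using $\chi_{\pi_0}(-1)=1$ (as $h(F)=1$) together with $\chi_\pi(-1)=(-1)^m$ (the computation in the proof of \cref{lem:comparison}), one obtains $m(\pi,\chi)=m(\pi_0,\chi)$ when $\chi_\pi(-1)=1$ and $m(\pi,\chi)+m(\pi_0,\chi)=1$ when $\chi_\pi(-1)=-1$. In both cases
\[
m(\pi,\chi)=\tfrac12\bigl(1+\chi_\pi(-1)(2\,m(\pi_0,\chi)-1)\bigr),
\]
so it is enough to show $m(\pi_0,\chi)=\tfrac12(1+\varepsilon(\pi',\chi))$.

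\emph{The geometric comparison.} By \cref{conj:mult_formula}, $m(\pi_0,\chi)=m_\geom(\theta_{\pi_0},\chi)$. Since $d=1$ and $n$ is even, \cref{def:geom_mult} writes $m_\geom(\theta_{\pi_0},\chi)=a+b$, where $a$ is the $\cT_\elliptic(H_0)$-term (with $H_0\cong\GL_{n/2}(E)$) and $b$ is the $\cT_\elliptic(H'_0)$-term (with $H'_0\cong\GL_{n/2}(F)$). By \cite[Theorem 1.4]{BPW19}, $b$ is the multiplicity of the generalized Shalika model of $\pi_0$ with respect to $\chi|_{F^\times}$; by \cref{hyp:2} this model is non-zero, and by \cite[Theorem A]{CS20} its multiplicity is $1$, so $b=1$. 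On the other side, write $\varepsilon_\geom(\Theta_{\tilde{\pi}'},\chi)$ as the sum of its $\cT_\elliptic(\tilde{\bfH})$-term (whose defining expression carries the explicit factor $2$) and its $\cT_\elliptic(\tilde{\bfH}')$-term. Using the base change character identity $\Theta_{\tilde{\pi}'}(\tilde{t})=\theta_{\pi_0}(\cN(\tilde{t}))$, the bijections $\Gamma_\elliptic(\tilde{\bfH}/Z_\bfG)\leftrightarrow\Gamma_\elliptic(H_0/Z_G)$ and $\Gamma_\elliptic(\tilde{\bfH}'/Z_\bfG)\leftrightarrow\Gamma_\elliptic(H'_0/Z_G)$ from \cref{subsec:integral_formula}, and the compatibility of Weyl discriminants, Weyl group orders, the characters $\chi_\bfH$ and $\chi_{H_0}$, and Haar measures under these bijections, I would show that the $\cT_\elliptic(\tilde{\bfH})$-term equals $2a$ and the $\cT_\elliptic(\tilde{\bfH}')$-term equals $b=1$ (the latter being a $\tau$-twisted Shalika multiplicity of $\pi'$, via a base change analogue of \cite[Theorem 1.4, Theorem 6.1]{BPW19}). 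Granting this,
\[
m(\pi_0,\chi)=m_\geom(\theta_{\pi_0},\chi)=a+1=\tfrac12\bigl(\varepsilon_\geom(\Theta_{\tilde{\pi}'},\chi)+1\bigr)=\tfrac12\bigl(1+\varepsilon(\pi',\chi)\bigr),
\]
the last step by \cref{conj:epsilon}; together with the previous paragraph this proves the theorem.

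\emph{The main obstacle.} The degenerate case and the reduction to $\pi_0$ are formal given Sections 3 and 4, and $b=1$ is a direct appeal to \cite{BPW19} and \cite{CS20}. The real content is the geometric matching. The $\cT_\elliptic(\tilde{\bfH})$-versus-$\cT_\elliptic(H_0)$ identity is essentially bookkeeping once \eqref{eq:AC} and the norm-map bijections are available (the factor $2$ accounting for the passage from $T(F)$ to $\tilde{\bfT}(F)_{/\tau}$ and, if needed, for the normalization of $\chi_\bfH$). But matching the $\cT_\elliptic(\tilde{\bfH}')$-term requires a base change analogue of the Shalika germ expansion of \cite{BPW19}: one must relate the regular nilpotent germ coefficient of the twisted character $\Theta_{\tilde{\pi}'}$ on $\bfG_{\tilde{t}'}(F)\cong\GL_2(K')$ to that of $\theta_{\pi_0}$ on $G_{t'}(F)\cong\GL_2(K)$ and identify the outcome with a Shalika multiplicity. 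This local harmonic-analytic input does not follow formally from the assumed conjectures, and it is where I expect the difficulty to lie; one might instead try to establish it through a twisted local relative trace formula for the $\tau$-twisted Shalika period on $\bfG(F)$.
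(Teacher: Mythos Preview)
Your outline is correct and matches the paper's argument almost step for step: the degenerate case via \cref{prop:deduce}, the reduction to $\pi_0$ (the paper phrases this by introducing an auxiliary $\pi_1\in\Irr_\disc(\GL_1(D'))$ with $\chi_{\pi_1}(-1)=-1$ and writing $\varepsilon_\geom(\pi',\chi)=m_\geom(\pi_0,\chi)-m_\geom(\pi_1,\chi)$ together with $m_\geom(\pi_0,\chi)+m_\geom(\pi_1,\chi)=1$, which is equivalent to your $m(\pi_0,\chi)=a+1$ and $\varepsilon_\geom=2a+1$), and the term-by-term matching of $\varepsilon_\geom(\pi',\chi)$ with $m_\geom(\theta_{\pi_0},\chi)$ through the norm-map bijections of \S\ref{subsec:integral_formula}.

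Where you diverge from the paper is in your assessment of the second matching. You propose identifying the $\cT_\elliptic(\tilde{\bfH}')$-term with a $\tau$-twisted Shalika multiplicity of $\pi'$ and then invoking a base change analogue of \cite[Theorems 1.4, 6.1]{BPW19}, and you flag this as the hard step possibly requiring a new twisted trace formula. The paper does not take this detour: it simply asserts $c_{\tilde{\pi}',\cO_{\tilde{t}'}}(\tilde{t}')=c_{\pi_0,\cO_{t'}}(t')$ for $\tilde{t}'\leftrightarrow t'$ as a direct consequence of the character identity \eqref{eq:AC}. The point is that the germ coefficients at a semisimple point are local invariants of the (twisted) character near that point, and the norm map $\cN$ provides exactly the descent needed to transport the germ expansion of $\Theta_{\tilde{\pi}'}$ near $\tilde{t}'$ to that of $\theta_{\pi_0}$ near $t'=\cN(\tilde{t}')$; this is the same mechanism as in the proof of \cite[Theorem 6.1]{BPW19} for the Jacquet--Langlands transfer, already invoked in \cref{prop:deduce}. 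So the step you single out as the main obstacle is no harder than the character matching itself, and no new twisted Shalika input is required.
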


\begin{proof}
Recall that $\rec_{\bfG(F)}(\pi')$ takes values in $\GSp_n(\C)$ with similitude factor $\chi\circ\Nm_{E/F}$ if and only if $\rec_{G(F)}(\pi)$ takes values in $\GSp_n(\C)$ with similitude factor $\chi|_{F^\times}$.
We may assume this is the case since otherwise we have $m(\pi,  \chi)=0$ by \cref{prop:deduce}.

Take a central division algebra $D'$ over $F$ of dimension $\dim_F(D')=n^2$ and put $G_1=\GL_1(D')$.
Take $\pi_1\in\Irr_\disc(G_1(F))$ so that $\JL(\pi_1)=\pi_0$.
Note that $\pi_0,  \pi_1\in\Pi_\phi$,  $\chi_{\pi_0}(-1)=(-1)^n=1$ and $\chi_{\pi_1}(-1)=-1$.
From the same argument as the proof of \cref{prop:deduce} using the character relation \eqref{eq:DKV} we obtain
    \begin{align*}
    m_\geom(\pi_0,  \chi)-m_\geom(\pi_1,  \chi)&=
    2\int_{\Gamma_\elliptic(H_0/Z_{\GL_n})}D^{H_0}(t)\theta_{\pi_0}(t)\chi_{H_0}^{-1}(t) \rd t \\
    & \hspace{60pt}
    +\int_{\Gamma_\elliptic(H'_0/Z_{\GL_n})}D^{H'_0}(t')^2 c_{\pi,  \cO_{t'}}(t')\chi_{H_0}^{-1}(t') \rd t'.
    \end{align*}

The definition of $\varepsilon_\geom(\pi',  \chi)$ can be rewritten as 
    \begin{align*}
    \varepsilon_\geom(\pi',  \chi)&=
    2\int_{\Gamma_\elliptic(\tilde{\bfH}/Z_\bfG)}D^{\tilde{\bfH}}(\tilde{t})
    \Theta_{\tilde{\pi}'}(\tilde{t})\chi_{\bfH}^{-1}(\tilde{t}) \rd \tilde{t} \\
    & \hspace{60pt}
    +\int_{\Gamma_\elliptic(\tilde{\bfH}'/Z_\bfG)}D^{\tilde{\bfH}'}(\tilde{t}')^2 
    c_{\tilde{\pi}',  \cO_{\tilde{t}'}}(\tilde{t}')\chi_{\bfH}^{-1}(\tilde{t}') \rd \tilde{t}'.
    \end{align*}
For $t\in\Gamma_\elliptic(H_0/Z_{\GL_n})$ and $\tilde{t}\in\Gamma_\elliptic(\tilde{\bfH}/Z_\bfG)$ such that $\tilde{t}\leftrightarrow t$,  we have $D^{\tilde{\bfH}}(\tilde{t})=D^{H_0}(t)$,  $\chi_{\bfH}(\tilde{t})=\chi_{H_0}(t)$ and $\Theta_{\tilde{\pi}'}(\tilde{t})=\theta_{\pi_0}(t)$ by \eqref{eq:AC}.
Similarly for $t'\in\Gamma_\elliptic(H'_0/Z_{\GL_n})$ and $\tilde{t}'\in\Gamma_\elliptic(\tilde{\bfH}'/Z_\bfG)$ such that $\tilde{t}'\leftrightarrow t'$,  we have$D^{\tilde{\bfH'}}(\tilde{t}')=D^{H'_0}(t')$,  $\chi_{\bfH}(\tilde{t}')=\chi_{H_0}(t')$ and $c_{\tilde{\pi}',  \cO_{\tilde{t}'}}(\tilde{t}')=c_{\pi_0,  \cO_{t'}}(t')$.

Hence we have $\varepsilon_\geom(\pi',  \chi)=m_\geom(\pi_0,  \chi)-m_\geom(\pi_1,  \chi)$.
On the other hand,  by \eqref{eq:deduce2} we have $m_\geom(\pi_0,  \chi)+m_\geom(\pi_1,  \chi)=1$.

If $\chi_\pi(-1)=1$,  we have $m_\geom(\pi,  \chi)=m_\geom(\pi_0,  \chi)$ due to \eqref{eq:DKV} and hence
    \[
    1+\chi_\pi(-1)\varepsilon_\geom(\pi',  \chi)=2m_\geom(\pi_0,  \chi)=2m_\geom(\pi,  \chi).
    \]
Similarly,  if $\chi_\pi(-1)=-1$,  we have $m_\geom(\pi,  \chi)=m_\geom(\pi_1,  \chi)$ and hence
    \[
    1+\chi_\pi(-1)\varepsilon_\geom(\pi',  \chi)=2m_\geom(\pi_1,  \chi)=2m_\geom(\pi,  \chi).
    \]
Combined with \cref{conj:mult_formula} and \cref{conj:epsilon},  we obtain 
    \[
    2m(\pi,  \chi)=1+\chi_\pi(-1)\varepsilon(\pi',  \chi).
    \]
\end{proof}

\section{Twisted local relative trace formula}
\label{sec:twistedLRTF}

\subsection{Split linear period}
\label{subsec:split_linear}

We say that a representation $\pi'$ of $\bfG(F)$ is \emph{$(\bfH,  \chi_\bfH)$-distinguished} or $\pi'$ has \emph{split linear periods} with respect to $\chi_\bfH$ if $\Hom_{\bfH(F)}(\pi',  \chi_\bfH)\neq0$.
For $\pi'\in\Irr(\bfG(F))$,  set $\tilde{m}(\pi',  \chi)=\dim_\C\Hom_{\bfH(F)}(\pi',  \chi_\bfH)$.
Recall that $\omega=(\chi\circ\Nm_{E/F})^{\frac{n}{2}}$,  which is regarded as a character of $Z_\bfG(F)\cong E^\times$.
Note that $\tilde{m}(\pi',  \chi)=0$ unless $\omega_{\pi'}=\omega$.

We assume the following two hypotheses for split linear periods.
One is the multiplicity one property.
\begin{hypothesis}\label{hyp:3}
For $\pi'\in\Irr(\bfG(F))$,  we have $\tilde{m}(\pi',  \chi)\leq 1$
\end{hypothesis}
It is known by Chen and Sun \cite[Theorem B]{CS20} that \cref{hyp:3} holds for all but finitely many $\chi$ and the case of the trivial character is established by Jacquet and Rallis \cite{JR96}.

The other one is the relation between split linear periods and Shalika periods.
\begin{hypothesis}\label{hyp:4}
For $\pi'\in\Irr_\disc(\bfG(F))$,  $\tilde{m}(\pi',  \chi)\neq0$ if and only if $\pi'$ has Shalika periods with respect to the character $\chi\circ\Nm_{E/F}$.
\end{hypothesis}
The ``if'' part follows from \cite[Proposition 3.1]{FJ93}.
When $\chi$ is the trivial character,  the other direction is also known.
See \cite[Proposition 3.4]{LM17} and \cite[Theorem 5.1]{Mat14}.

Let $\bfP_{\mathrm{mir}}$ be the mirabolic subgroup of $G$,  the group of matrices with last row equal to $(0,  \ldots,  0,  1)$.
Suppose that $\chi$ is a unitary character and $\pi'\in\Irr_\gen(\bfG(F))$ is a unitary representation.
Then for $W\in\cW(\pi',  \psi_E)$,  the integral
    \[
    \ell_\chi(W):=\int_{(\bfH\cap \bfN)(F)\bs (\bfH\cap \bfP_{\mathrm{mir}})(F)}
    W(p) \chi_\bfH^{-1}(p) \rd p
    \]
converges \cite[Proposition 3.2]{LM15}.
Moreover,  it defines a nontrivial element of $\Hom_{\bfH(F)}(\pi',  \chi_\bfH)$ if $\tilde{m}(\pi',  \chi)\neq0$.
Recall that we set $\varepsilon(\pi',  \chi)=\varepsilon(\frac12,  \pi'\otimes\chi^{-1},  \psi_E)\chi(-1)^\frac{n}{2}$.

Let $\pi'\in\Irr_\disc(\bfG(F))$ such that $\tilde{m}(\pi',  \chi)\neq0$.
Assuming \cref{hyp:4},  $\pi'$ has Shalika model.
Hence by the functional equation \cite[Proposition 3.3]{FJ93},  we have 
    \[
    \ell_\chi(\tilde{\pi}'(\tau)W)=\varepsilon(\pi',  \chi)\ell_\chi(W),  \qquad
    W\in\cW(\pi',  \psi_E).
    \]
We define a bilinear form $\cL_{\pi',  \chi}\,\colon \pi'\times\pi'^\vee\to\C$ by 
    \[
    \cL_{\pi',  \chi}(v,  v^\vee)
    =\int_{Z_\bfG(F)\bs \bfH(F)}\langle \pi'(h)v,  v^\vee \rangle
    \chi_\bfH^{-1}(h) \rd h,  \qquad 
    v\in\pi',  \ v^\vee\in\pi'^\vee,
    \]
where $\langle \cdot,  \cdot\rangle$ is the canonical pairing on $\pi'\times\pi'^\vee$.
This integral converges absolutely since the symmetric pair $(\bfG,  \bfH)$ is tempered by \cite[Corollary 5.16]{GO16}.
If $\pi'$ is $(\bfH,  \chi_\bfH)$-distinguished and $\pi'^\vee$ is $(\bfH,  \chi_\bfH^{-1})$-distinguished,  then $\cL_{\pi',  \chi}$ defines a non-trivial element of $\Hom_{\bfH(F)\times \bfH(F)}(\pi'\boxtimes\pi'^\vee,  \chi_\bfH\boxtimes\chi_\bfH^{-1})$.
See \cite[Theorem 1.4]{CZhang16} and \cite[Proposition 4.2.1]{BP18}.

If we assume \cref{hyp:3},  there is a constant $c\in\C^\times$ such that
    \[
    \cL_{\pi',  \chi}(v,  v^\vee)=c\cdot \ell_\chi(W_v)\ell_{\chi^{-1}}(W_{v^\vee})
    \]
for all $v\in\pi'$,  $v^\vee\in\pi'^\vee$.
In particular,  we have
    \begin{equation}\label{eq:const}
    \cL_{\pi',  \chi}(\tilde{\pi}'(\tau)v,  v^\vee)=\varepsilon(\pi',  \chi)\cL_{\pi',  \chi}(v,  v^\vee).
    \end{equation}

\subsection{Geometric and spectral expansion}
\label{subsec:expansion2}

For a strongly cuspidal Schwartz-Harish-Chandra function $\tilde{f}\in\cC(\tilde{\bfG}(F),  \omega)$,  we define the twisted kernel function $\tilde{K}_{\tilde{f}}$ by
    \[
    \tilde{K}_{\tilde{f}}(g_1,  g_2)=\int_{Z_\bfG(F)\bs\tilde{\bfH}(F)} \tilde{f}(g_1^{-1}\tilde{h}g_2)
    \chi_\bfH^{-1}(\tilde{h}) \rd\tilde{h}.
    \]
The distribution $J(\tilde{f})$ is defined as
    \[
    J(\tilde{f})=\int_{\bfH(F)\bs\bfG(F)}\tilde{K}_{\tilde{f}}(g,  g)\rd g.
    \]

Define the geometric side of $J(\tilde{f})$ by $J_\geom(\tilde{f})=\varepsilon_\geom(\Theta_{\tilde{f}},  \chi)$.

\begin{conjecture}\label{conj:geom_exp2}
For a strongly cuspidal function $\tilde{f}\in\cC(\tilde{G},  \omega)$,  we have
    \[
    J(\tilde{f})=J_\geom(\tilde{f}),
    \]
where $\Theta_{\tilde{f}}$ denotes the (non-invariant) weighted orbital integral of $\tilde{f}$.
\end{conjecture}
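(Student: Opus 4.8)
The plan is to follow the strategy Waldspurger developed for the twisted local trace formula in \cite{Wal12b}, adapted to the relative setting exactly as Beuzart-Plessis and Wan did in the untwisted case, so that the argument runs parallel to the (expected) proof of \cref{conj:geom_exp}. Since the symmetric pair $(\bfG,\bfH)$ is tempered by \cite[Corollary 5.16]{GO16}, the distribution $J(\tilde{f})$ is already absolutely convergent; the real task is to convert it into a sum of weighted orbital integrals. First I would insert an Arthur--Waldspurger truncation function $\kappa_N$ on $\bfH(F)\bs\bfG(F)$ and set $J_N(\tilde{f})=\int_{\bfH(F)\bs\bfG(F)}\kappa_N(g)\tilde{K}_{\tilde{f}}(g,g)\,dg$, then prove that $J_N(\tilde{f})\to J(\tilde{f})$ as $N\to\infty$, using temperedness together with the rapid decay of strongly cuspidal Schwartz--Harish-Chandra functions and the twisted analogue of \cite[Theorem 4.1.1]{BP18}.

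Next, for fixed $N$ I would unfold $J_N(\tilde{f})$ by interchanging the $g$- and $\tilde{h}$-integrals and stratify according to the $\bfG(F)$-conjugacy class of $\cN(\tilde{h})$ in $\GL_n(F)$; this descends along the $\tilde{\bfH}(F)$-conjugacy classes in $\tilde{\bfH}(F)$ which, via the norm correspondence $\tilde{t}\leftrightarrow t$, match $H_0(F)$-conjugacy classes, together with the contribution attached to $\bfH'$ --- which, because $\bfH'\cong\Res_{E/F}\GL_{n/2}$ plays the role of a Shalika-type subgroup, produces the regular nilpotent germ $c_{\Theta_{\tilde{f}},\cO_{\tilde{t}'}}$ just as in the proof of \cite[Theorem 6.1]{BPW19}. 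Applying twisted Harish-Chandra descent around each $\tau$-twisted semisimple point, together with the defining property of $\Theta_{\tilde{f}}$ as a non-invariant weighted orbital integral and the strong cuspidality of $f$, I expect the contributions of all non-elliptic strata to vanish in the limit $N\to\infty$, leaving exactly the two terms appearing in $\varepsilon_\geom(\Theta_{\tilde{f}},\chi)$: the elliptic torus integrals over $\tilde{\bfT}(F)_{/\tau}$ with the leading factor $2$, and the nilpotent-germ integrals over $\tilde{\bfT}'(F)_{/\tau}$. Matching the combinatorial constants ($|W(\bfH,\tilde{\bfT})|^{-1}$, $|W(\bfH',\tilde{\bfT}')|^{-1}$, and the $2$) is then a bookkeeping exercise with the Haar-measure normalizations, parallel to \cref{def:geom_mult}.

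The main obstacle I anticipate is the twisted descent machinery itself: one needs quasi-characters, germ expansions, and weighted orbital integrals on the non-connected group $\tilde{\bfG}$ in the form set up in \cite{Wal12b}, including the $\tau$-twisted Weyl integration formulae on $\tilde{\bfH}(F)$ and $\tilde{\bfH}'(F)$ and the compatibility of the norm map $\cN$ with these expansions. A second delicate point is the twisted analogue of the $\bfH'$-contribution computation: identifying it with the regular nilpotent germ $c_{\Theta_{\tilde{f}},\cO_{\tilde{t}'}}$ requires redoing the argument of \cite[Theorem 6.1]{BPW19} on the twisted space over $\bfG_{\tilde{t}'}(F)\cong\GL_2(K')$ rather than on a connected reductive group, and showing that the would-be ``non-split'' contributions vanish --- which is precisely where the temperedness of $(\bfG,\bfH)$ is used most essentially. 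Finally, proving that the truncation error terms genuinely tend to $0$, and not merely stay bounded, will as usual require the full force of the strong cuspidality of $f$.
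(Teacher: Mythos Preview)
The statement you are attempting to prove is \cref{conj:geom_exp2}, which in the paper is a \emph{conjecture}, not a theorem. The paper does not contain a proof of it: it is stated as the expected geometric expansion of the twisted local relative trace formula, and the subsequent results (\cref{cor:spec} and the proposition following it) only show that the spectral expansion $J(\tilde{f})=J_\spec(\tilde{f})$ holds for cuspidal $\tilde{f}$ under \cref{hyp:3} and \cref{hyp:4}, and that \cref{conj:geom_exp2} would imply \cref{conj:epsilon}. There is therefore nothing in the paper to compare your argument against.

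That said, your outline is a reasonable sketch of how one would expect such a result to be proved, paralleling the untwisted case in \cite{BPW19} and the twisted machinery of \cite{Wal12b}. But you should be aware that what you have written is a strategy, not a proof: the steps you flag as ``obstacles'' (twisted descent and germ expansions on $\tilde{\bfG}$, the twisted analogue of the $\bfH'$-contribution identification, and the vanishing of truncation errors) are precisely the substantive content, and none of them is carried out. In particular, the claim that non-elliptic strata vanish in the limit and that the $\bfH'$-term descends to the regular nilpotent germ $c_{\Theta_{\tilde{f}},\cO_{\tilde{t}'}}$ on the twisted space would require genuinely new work beyond what is in \cite{BPW19} or \cite{Wal12b}. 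So your proposal is not incorrect as a plan, but it does not constitute a proof, and the paper itself treats the statement as open.
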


Let $\tilde{f}\in\cC(\tilde{\bfG}(F),  \omega)$.
We define a Schwartz-Harish-Chandra function on $\bfG(F)$ by $f(g)=\tilde{f}(g\tau)$.
Note that if $\tilde{f}$ is cuspidal,  then so is $f$.
We have 
    \[
    \tilde{K}_{\tilde{f}}(g,  g)=\int_{Z_\bfG(F)\bs \bfH(F)} f(g^{-1}h\tau(g))\chi_\bfH^{-1}(h) \rd h.
    \]
    
By the Plancherel formula,  
    \[
    f(g)=\sum_{\substack{\pi'\in\Irr_\disc(\bfG(F)) \\ \omega_{\pi'}=\omega}}
    d(\pi')f_{\pi'}(g),  \qquad g\in \bfG(F),
    \]
where $d(\pi')$ is the formal degree of $\pi'$ and $f_{\pi'}(g)=\tr(\pi'^\vee(g^{-1})\pi'^\vee(f))$ is a linear combination of matrix coefficients of $\pi'$.

\begin{lemma}
Assume that \cref{hyp:3} and  \cref{hyp:4} hold.
For $\pi'\in\Irr_\disc(\bfG(F))$ such that $\omega_{\pi'}=\omega$,  we have
    \[
    \int_{\bfH(F)\bs\bfG(F)}\int_{Z_\bfG(F)\bs\bfH(F)} f_{\pi'}(g^{-1}h\tau(g))\chi_\bfH^{-1}(h) \rd h \rd g=
        \begin{cases}
        \varepsilon(\pi',  \chi)d(\pi')^{-1}\tilde{m}(\pi',  \chi)\Theta_{\tilde{\pi}'^\vee}(\tilde{f}_{\pi'})
        & \text{if $\pi'^c\cong\pi'$} \\
        0 & \text{otherwise}.
        \end{cases}
    \]
Here,  $\tilde{f}_{\pi'}\in\cC(\tilde{\bfG}(F),  \omega)$ is defined by $\tilde{f}_{\pi'}(\tilde{g})=f_{\pi'}(\tilde{g}\tau)$,  $\tilde{g}\in\tilde{\bfG}(F)$,  $d(\pi')$ is the formal degree of $\pi'$ and $\Theta_{\tilde{\pi}'^\vee}$ is the twisted character of $\tilde{\pi}'^\vee$.
\end{lemma}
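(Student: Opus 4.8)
The plan is to evaluate the double integral on a single matrix coefficient of $\pi'$, recognise the result as a $\bfG(F)$-invariant bilinear form, and pin down the resulting scalar by Schur's lemma together with the functional equation \eqref{eq:const}. After twisting by an unramified character we may assume $\omega$, and hence $\chi$, is unitary, so that $\pi'$ is square integrable and the Schur orthogonality relations and the Plancherel formula are at our disposal; recall that under \cref{hyp:3} and \cref{hyp:4} the relation \eqref{eq:const}, which is what carries the root number, holds — trivially so when $\pi'$ is not $(\bfH,\chi)$-distinguished, since then $\cL_{\pi',\chi}=0$.

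First I would write $f_{\pi'}$ as a finite sum $\sum_j\langle\pi'(\cdot)v_j,v_j^\vee\rangle$ of matrix coefficients, chosen so that $\sum_j v_j^\vee\otimes v_j$ is the operator $\pi'^\vee(f)$ on the space of $\pi'^\vee$; this is just the expansion of $f_{\pi'}(g)=\tr(\pi'^\vee(g^{-1})\pi'^\vee(f))$. The inner $\bfH$-integral of such a matrix coefficient is, by the identity $\langle\pi'(g^{-1}x)v,v^\vee\rangle=\langle\pi'(x)v,\pi'^\vee(g)v^\vee\rangle$, equal to $\cL_{\pi',\chi}(\pi'(\tau(g))v_j,\pi'^\vee(g)v_j^\vee)$, so the left-hand side equals $\sum_j\cM(v_j,v_j^\vee)$ with
\[
\cM(v,v^\vee):=\int_{\bfH(F)\bs\bfG(F)}\cL_{\pi',\chi}\bigl(\pi'(\tau(g))v,\pi'^\vee(g)v^\vee\bigr)\,\rd g .
\]
The integral defining $\cL_{\pi',\chi}$ converges because $(\bfG,\bfH)$ is tempered, and convergence of the outer integral follows from that temperedness together with Harish-Chandra's estimates on the matrix coefficients of $\pi'$, just as in the convergence of $J(\tilde f)$. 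Since $\bfH$ is $\tau$-stable and $\chi_\bfH\circ\tau=\chi_\bfH$, the $\bfH(F)\times\bfH(F)$-equivariance of $\cL_{\pi',\chi}$ makes the integrand of $\cM$ left $\bfH(F)$-invariant in $g$, and the substitution $g\mapsto gg_0$ shows $\cM(\pi'(\tau(g_0))v,\pi'^\vee(g_0)v^\vee)=\cM(v,v^\vee)$; thus $\cM\in\Hom_{\bfG(F)}\bigl((\pi'\circ\tau)\otimes\pi'^\vee,\C\bigr)\cong\Hom_{\bfG(F)}(\pi'\circ\tau,\pi')$. By Schur's lemma this vanishes unless $\pi'\circ\tau\cong\pi'$, i.e.\ $\pi'^c\cong\pi'$ — which disposes of the second case — and when $\pi'^c\cong\pi'$ it is one-dimensional, spanned by $(v,v^\vee)\mapsto\langle\tilde{\pi}'(\tau)v,v^\vee\rangle$, because $\tilde{\pi}'(\tau)$, being an involution, intertwines $\pi'\circ\tau$ with $\pi'$. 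Hence $\cM(v,v^\vee)=c\,\langle\tilde{\pi}'(\tau)v,v^\vee\rangle$ for a scalar $c$ depending only on $\pi'$.

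It remains to compute $c$ when $\pi'^c\cong\pi'$. Writing $\pi'(\tau(g))=\tilde{\pi}'(\tau)\pi'(g)\tilde{\pi}'(\tau)$ and applying \eqref{eq:const} extracts the root number:
\[
\cM(v,v^\vee)=\varepsilon(\pi',\chi)\int_{\bfH(F)\bs\bfG(F)}\cL_{\pi',\chi}\bigl(\pi'(g)\tilde{\pi}'(\tau)v,\pi'^\vee(g)v^\vee\bigr)\,\rd g .
\]
The remaining untwisted integral is again $\bfG(F)$-invariant, so equals $c_0\,\langle\tilde{\pi}'(\tau)v,v^\vee\rangle$, and $c_0=d(\pi')^{-1}\tilde m(\pi',\chi)$ is the standard identification of the relative character of the tempered pair $(\bfG,\bfH)$ with the period multiplicity — when $\pi'$ is not distinguished both sides vanish, and when it is distinguished one factors $\cL_{\pi',\chi}$ through the one-dimensional space $\Hom_{\bfH(F)}(\pi',\chi_\bfH)\otimes\Hom_{\bfH(F)}(\pi'^\vee,\chi_\bfH^{-1})$ (using \cref{hyp:3}) and evaluates by Schur orthogonality. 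Summing over $j$ then gives $\sum_j\cM(v_j,v_j^\vee)=\varepsilon(\pi',\chi)\,d(\pi')^{-1}\tilde m(\pi',\chi)\sum_j\langle\tilde{\pi}'(\tau)v_j,v_j^\vee\rangle$, and using $\tilde{\pi}'^\vee(\tau)={}^{t}\tilde{\pi}'(\tau)$, $\tilde{f}_{\pi'}(g\tau)=f_{\pi'}(g)$, and the description of $\sum_j v_j^\vee\otimes v_j$ as $\pi'^\vee(f)$, one recognises $\sum_j\langle\tilde{\pi}'(\tau)v_j,v_j^\vee\rangle$ as the twisted character value $\Theta_{\tilde{\pi}'^\vee}(\tilde{f}_{\pi'})$, which yields the formula of the lemma.

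The step I expect to be the main obstacle is precisely this last identification of the constants: checking $c_0=d(\pi')^{-1}\tilde m(\pi',\chi)$ and matching $\sum_j\langle\tilde{\pi}'(\tau)v_j,v_j^\vee\rangle$ with $\Theta_{\tilde{\pi}'^\vee}(\tilde{f}_{\pi'})$ is routine in outline but demands scrupulous bookkeeping of the normalizations — the Haar measures on $\bfH(F)\bs\bfG(F)$ and $Z_\bfG(F)\bs\bfH(F)$, the formal degree $d(\pi')$, and the measure on $\tilde{\bfG}(F)$ used in defining the twisted character — since these are exactly where stray factors of $d(\pi')$ are apt to appear. A secondary point not to be glossed over is the absolute convergence of the outer integral over $\bfH(F)\bs\bfG(F)$, which genuinely rests on the temperedness of the symmetric pair $(\bfG,\bfH)$ established above.
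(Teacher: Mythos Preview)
Your proposal is correct and follows essentially the same approach as the paper: reduce to a single matrix coefficient, recognise the inner integral as $\cL_{\pi',\chi}(\pi'(\tau(g))v,\pi'^\vee(g)v^\vee)$, extract the root number via \eqref{eq:const}, and identify the remaining constant through Schur orthogonality after factoring $\cL_{\pi',\chi}$ through the one-dimensional $(\bfH,\chi_\bfH)$-coinvariants. The only cosmetic difference is that you first invoke Schur's lemma abstractly to see $\cM$ is a scalar multiple of $\langle\tilde{\pi}'(\tau)v,v^\vee\rangle$ and then compute the scalar, whereas the paper unfolds directly with an explicit dual-basis pair $v_0,v_0^\vee$ and reads off both the vanishing when $\pi'\not\cong\pi'^c$ and the value of the scalar from a single Schur-orthogonality computation; the content is the same.
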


\begin{proof}
Since $f_{\pi'}$ is a linear combination of matrix coefficients of $\pi'$,  we may suppose $f_{\pi'}(g)=f_{v,  v^\vee}(g):=\langle \pi'(g)v,  v^\vee \rangle$ for some $v\in\pi'$ and $v^\vee\in\pi'^\vee$.
It suffices to show 
    \begin{align*}
    &\int_{\bfH(F)\bs \bfG(F)}\int_{Z_\bfG(F)\bs \bfH(F)} f_{v,  v^\vee}(g^{-1}h\tau(g))\chi_\bfH^{-1}(h) \rd h \rd g \\
    &\hspace{100pt}=
        \begin{cases}
        \varepsilon(\pi',  \chi)d(\pi)^{-1}m(\pi',  \chi)\langle \tilde{\pi}'(\tau)v,  v^\vee \rangle
        & \text{if $\pi'^c\cong\pi'$} \\
        0 & \text{otherwise}.
        \end{cases}
    \end{align*}

Since 
    \begin{align*}
    \int_{Z_\bfG(F)\bs \bfH(F)} f_{v,  v^\vee}(g^{-1}h\tau(g))\chi_\bfH^{-1}(h) \rd h 
    &=\int_{Z_\bfG(F)\bs \bfH(F)} \langle \pi'(h\tau(g))v,  \pi'^\vee(g)v^\vee \rangle\chi_\bfH^{-1}(h) \rd h \\
    &=\cL_{\pi',  \chi}(\pi(\tau(g))v,  \pi^\vee(g)v^\vee),  
    \end{align*}
this integral is zero unless $\tilde{m}(\pi',  \chi)\neq0$.
 
Suppose that $\tilde{m}(\pi',  \chi)\neq0$.   
Take $v_0\in\pi'$ and $v_0^\vee\in\pi'^\vee$ so that $\cL_{\pi',  \chi}(v_0,  v_0^\vee)=1$.
Note that their image in the $(\bfH(F),  \chi_\bfH)$- and $(\bfH(F),  \chi_\bfH^{-1})$-coinvariants of $\pi'$ and $\pi'^\vee$ respectively,   form dual bases.
Thus the last expression becomes $\cL_{\pi',  \chi}(\pi'(\tau(g))v,  v_0^\vee)\cL_{\pi,  \chi}(v_0,  \pi'^\vee(g)v^\vee)$.

Hence 
    \begin{align*}
    &\int_{\bfH(F)\bs \bfG(F)}\int_{Z_\bfG(F)\bs \bfH(F)} f_{v,  v^\vee}(g^{-1}h\tau(g))\chi_\bfH^{-1}(h) \rd h \rd g \\
    &\hspace{50pt}=\int_{\bfH(F)\bs\bfG(F)} 
    \cL_{\pi',  \chi}(\pi'(\tau(g))v,  v_0^\vee)\cL_{\pi',  \chi}(v_0,  \pi'^\vee(g)v^\vee) \rd g \\
    &\hspace{50pt}=\int_{Z_\bfG(F)\bs \bfG(F)} \cL_{\pi',  \chi}(\pi'(\tau(g))v,  v_0^\vee) 
    \langle v_0,  \pi'^\vee(g)v^\vee \rangle \rd g \\
    &\hspace{50pt}=\int_{Z_\bfG(F)\bs \bfH(F)}\chi_\bfH^{-1}(h) \int_{Z_\bfG(F)\bs \bfG(F)} 
    \langle \pi'(\tau(g))v,  \pi'^\vee(h^{-1})v_0^\vee \rangle \langle v_0,  \pi'^\vee(g)v^\vee \rangle \rd g \rd h
    \end{align*}
By the Schur orthogonality relations,  the last integral vanishes unless $\pi'\cong\pi'\circ\tau$,  or equivalently $\pi'\cong\pi'^c$.

Suppose that $\pi'\cong\pi'^c$.
Then  from \eqref{eq:const},  
    \[
    \cL_{\pi',  \chi}(\pi'(\tau(g))v,  v_0^\vee)=\cL_{\pi',  \chi}(\tilde{\pi}'(\tau)\pi'(g)\tilde{\pi}'(\tau)v,  v_0^\vee) 
    =\varepsilon(\pi',  \chi) \cL_{\pi',  \chi}(\pi'(g)\tilde{\pi}'(\tau)v,  v_0^\vee).
    \]
Therefore we get
\begin{align*}
    &\int_{\bfH(F)\bs\bfG(G)}\int_{Z_\bfG(F)\bs\bfH(F)} f_{v,  v^\vee}(g^{-1}h\tau(g))\chi_\bfH^{-1}(h) \rd h \rd g \\
    &\hspace{40pt}
    =\varepsilon(\pi',  \chi) \int_{Z_\bfG(F)\bs\bfG(F)} \cL_{\pi,  \chi}(\pi'(g)\tilde{\pi}'(\tau)v,  v_0^\vee) 
    \langle v_0,  \pi'^\vee(g)v^\vee \rangle \rd g \\
    &\hspace{40pt}
    =\varepsilon(\pi',  \chi)d(\pi')^{-1}\cL_{\pi',  \chi}(v_0,  v_0^\vee)\langle \tilde{\pi}'(\tau)v,  v^\vee \rangle
     =\varepsilon(\pi',  \chi)d(\pi)^{-1}\tilde{m}(\pi',  \chi)\langle \tilde{\pi}'(\tau)v,  v^\vee \rangle.
    \end{align*}
\end{proof}

Define the distribution $J_\spec(\tilde{f})$,  the spectral side of $J(\tilde{f})$ by
    \[
    J_\spec(\tilde{f})=\sum_{\substack{\pi'\in\Irr_\disc(\bfG(F)) \\ 
    \omega_\pi'=\omega, \ \pi'\cong\pi'^c}}
     \varepsilon(\pi',  \chi)\tilde{m}(\pi',  \chi)\Theta_{\tilde{\pi}'^\vee}(\tilde{f}).
    \]
We obtain the following spectral expansion of $J(\tilde{f})$.

\begin{corollary}\label{cor:spec}
Assume that \cref{hyp:3} and  \cref{hyp:4} hold.
For a cuspidal function $\tilde{f}\in\cC(\tilde{\bfG},  \omega)$,  we have $J(\tilde{f})=J_\spec(\tilde{f})$.
\end{corollary}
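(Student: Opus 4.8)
The plan is to obtain the spectral expansion of $J(\tilde f)$ by feeding the Plancherel decomposition of the associated cuspidal function into the definition and then applying, term by term, the lemma just proved.

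First I would pass to the function $f(g)=\tilde f(g\tau)$ on $\bfG(F)$: it lies in $\cC(\bfG(F),\omega)$ and is cuspidal because $\tilde f$ is, and by the identity for $\tilde K_{\tilde f}(g,g)$ recorded above one has
\[
J(\tilde f)=\int_{\bfH(F)\bs\bfG(F)}\int_{Z_\bfG(F)\bs\bfH(F)}f(g^{-1}h\tau(g))\chi_\bfH^{-1}(h)\,\rd h\,\rd g.
\]
I then insert the Plancherel formula for the cuspidal function $f$, namely $f=\sum_{\pi'}d(\pi')f_{\pi'}$ with $\pi'$ running over $\Irr_\disc(\bfG(F))$ with $\omega_{\pi'}=\omega$ — equivalently $\tilde f=\sum_{\pi'}d(\pi')\tilde f_{\pi'}$ — the series converging in the Harish-Chandra Schwartz topology.

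The one genuinely analytic point is to commute this sum with the two integrals. Each $\tilde f_{\pi'}$ is again cuspidal, so $J(\tilde f_{\pi'})$ is defined — the defining integrals converge absolutely since the symmetric pair $(\bfG,\bfH)$ is tempered by \cite[Corollary 5.16]{GO16} — and $J$ is continuous for the Schwartz topology; hence $J(\tilde f)=\sum_{\pi'}d(\pi')J(\tilde f_{\pi'})$, where
\[
J(\tilde f_{\pi'})=\int_{\bfH(F)\bs\bfG(F)}\int_{Z_\bfG(F)\bs\bfH(F)}f_{\pi'}(g^{-1}h\tau(g))\chi_\bfH^{-1}(h)\,\rd h\,\rd g
\]
is exactly the quantity evaluated in the preceding lemma. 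This interchange, together with the uniformity in $\pi'$ of the convergence estimates for the kernels $\tilde K_{\tilde f_{\pi'}}$, is the step I expect to need the most care; it is standard in this circle of ideas and relies only on the temperedness of $(\bfG,\bfH)$ and the decay of the Plancherel coefficients of $\tilde f$, so I do not anticipate a real obstacle.

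It then remains to assemble the terms. By the preceding lemma (under \cref{hyp:3} and \cref{hyp:4}), $J(\tilde f_{\pi'})=0$ unless $\pi'^c\cong\pi'$, and for such $\pi'$ it is expressed through $\varepsilon(\pi',\chi)$, $\tilde m(\pi',\chi)$ and the value of the twisted character $\Theta_{\tilde\pi'^\vee}$ on the Plancherel component $\tilde f_{\pi'}$. Multiplying by the Plancherel weights $d(\pi')$ and using the decomposition $\tilde f=\sum_{\rho'}d(\rho')\tilde f_{\rho'}$ — whereby, by the Schur orthogonality relations, $\tilde\pi'^\vee$ annihilates $\tilde f_{\rho'}$ for $\rho'\not\cong\pi'$, so that the formal-degree normalizations cancel and $\Theta_{\tilde\pi'^\vee}(\tilde f)$ is recovered from its value on $\tilde f_{\pi'}$ — one obtains $J(\tilde f)=\sum_{\pi'}\varepsilon(\pi',\chi)\tilde m(\pi',\chi)\Theta_{\tilde\pi'^\vee}(\tilde f)$, the sum over $\pi'\in\Irr_\disc(\bfG(F))$ with $\omega_{\pi'}=\omega$ and $\pi'\cong\pi'^c$. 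This is precisely $J_\spec(\tilde f)$, which completes the proof. Given the lemma, the corollary is a formal consequence of the Plancherel formula; the only substantive points are the bookkeeping of the $d(\pi')$ factors and the term-by-term integration above.
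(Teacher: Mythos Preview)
Your proposal is correct and follows exactly the route the paper intends: the corollary is stated without proof immediately after the lemma, and the implicit argument is precisely the one you give --- apply the Plancherel decomposition $f=\sum_{\pi'}d(\pi')f_{\pi'}$, invoke the lemma termwise, and reassemble using Schur orthogonality to pass from $\Theta_{\tilde\pi'^\vee}(\tilde f_{\pi'})$ back to $\Theta_{\tilde\pi'^\vee}(\tilde f)$. Your identification of the interchange of sum and integral as the only genuinely analytic step, justified by the temperedness of $(\bfG,\bfH)$, is also in line with how such spectral expansions are handled in this literature (cf.\ \cite[Theorem 3.1.1]{BP18} for the untwisted analogue).
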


\begin{proposition}
Assume that \cref{hyp:2},  \cref{hyp:3} and  \cref{hyp:4} hold.
Then \cref{conj:epsilon} follows from \cref{conj:geom_exp2}.
\end{proposition}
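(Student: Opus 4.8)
The plan is to run, on the twisted group $\tilde{\bfG}(F)$, the argument that derives \cref{conj:mult_formula} from \cref{conj:geom_exp}: combine the conjectural geometric expansion \cref{conj:geom_exp2} with the spectral expansion \cref{cor:spec} (which holds under \cref{hyp:3} and \cref{hyp:4}), and then isolate the representation $\pi'$ of \cref{conj:epsilon} by testing against one of its twisted matrix coefficients. As a preliminary normalization I would twist $\pi'$ and $\chi$ by a real unramified character so that $\chi$ and $\omega=(\chi\circ\Nm_{E/F})^{n/2}$ become unitary; this affects neither the hypothesis that $\rec_{\bfG(F)}(\pi')$ take values in $\GSp_n(\C)$ with similitude factor $\chi\circ\Nm_{E/F}$, nor the conclusion $\varepsilon_\geom(\pi',\chi)=\varepsilon(\pi',\chi)$.

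So fix $\pi'\in\Irr_\disc(\bfG(F))$ as in \cref{conj:epsilon}. Pick $v\in\pi'$ and $v^\vee\in\pi'^\vee$ with $\langle\tilde{\pi}'(\tau)v,v^\vee\rangle\neq 0$ (possible because $\tilde{\pi}'(\tau)$ is an involution), put $f=f_{v,v^\vee}$, and let $\tilde{f}\in\cC(\tilde{\bfG}(F),\omega)$ be the twisted matrix coefficient $\tilde{f}(\tilde{g})=f(\tilde{g}\tau)$. As in the untwisted situation, $\tilde{f}$ is cuspidal, hence strongly cuspidal, so both \cref{conj:geom_exp2} and \cref{cor:spec} apply to it. On the spectral side, \cref{cor:spec} gives $J(\tilde{f})=J_\spec(\tilde{f})=\sum_{\sigma}\varepsilon(\sigma,\chi)\tilde{m}(\sigma,\chi)\Theta_{\tilde{\sigma}^\vee}(\tilde{f})$, the sum over $\sigma\in\Irr_\disc(\bfG(F))$ with $\omega_\sigma=\omega$ and $\sigma\cong\sigma^c$. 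For each such $\sigma$ one has $\tilde{\sigma}^\vee(\tilde{f})=\sigma^\vee(f)\circ\tilde{\sigma}^\vee(\tau)$, and by the Schur orthogonality relations $\sigma^\vee(f)=0$ unless $\sigma\cong\pi'$, while $\pi'^\vee(f)$ is the rank-one operator $u^\vee\mapsto d(\pi')^{-1}\langle v,u^\vee\rangle v^\vee$ — this is the computation made in the proof of the lemma preceding \cref{cor:spec}. Hence only the term $\sigma=\pi'$ survives, with $\Theta_{\tilde{\pi}'^\vee}(\tilde{f})=d(\pi')^{-1}\langle\tilde{\pi}'(\tau)v,v^\vee\rangle\neq 0$. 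Moreover, \cref{hyp:2} applied over the field $E$ to the character $\chi\circ\Nm_{E/F}$ shows that the condition on $\rec_{\bfG(F)}(\pi')$ forces $\pi'$ to have Shalika periods with respect to $\chi\circ\Nm_{E/F}$; then \cref{hyp:4} gives $\tilde{m}(\pi',\chi)\neq 0$ and \cref{hyp:3} gives $\tilde{m}(\pi',\chi)=1$. Thus $J(\tilde{f})=\varepsilon(\pi',\chi)\,\Theta_{\tilde{\pi}'^\vee}(\tilde{f})$.

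On the geometric side, \cref{conj:geom_exp2} gives $J(\tilde{f})=J_\geom(\tilde{f})=\varepsilon_\geom(\Theta_{\tilde{f}},\chi)$, where $\Theta_{\tilde{f}}$ is the non-invariant twisted weighted orbital integral of $\tilde{f}$. Here I would use the twisted analogue of \cite[Proposition 2.6.1]{BP18} (in the vein of \cite{Wal12b}): for a twisted matrix coefficient $\tilde{f}$ of the $c$-selfconjugate essentially square integrable representation $\pi'$, the quasi-character $\Theta_{\tilde{f}}$ equals $\Theta_{\tilde{\pi}'^\vee}(\tilde{f})$ times the twisted character $\Theta_{\tilde{\pi}'}$, precisely as $\theta_f=\Theta_{\pi^\vee}(f)\,\theta_\pi$ in the untwisted case. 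Since $\varepsilon_\geom(\,\cdot\,,\chi)$ is linear in its quasi-character argument (immediate from the integral formulas defining it, the germ coefficients being linear in the quasi-character), this gives $J_\geom(\tilde{f})=\Theta_{\tilde{\pi}'^\vee}(\tilde{f})\,\varepsilon_\geom(\Theta_{\tilde{\pi}'},\chi)=\Theta_{\tilde{\pi}'^\vee}(\tilde{f})\,\varepsilon_\geom(\pi',\chi)$. Comparing the two expressions for $J(\tilde{f})$ and dividing by the nonzero scalar $\Theta_{\tilde{\pi}'^\vee}(\tilde{f})$ yields $\varepsilon_\geom(\pi',\chi)=\varepsilon(\pi',\chi)$, which is \cref{conj:epsilon}.

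The main obstacle is this twisted version of \cite[Proposition 2.6.1]{BP18}: one must show that the non-invariant twisted weighted orbital integral of a twisted matrix coefficient of $\tilde{\pi}'$ represents the twisted character $\Theta_{\tilde{\pi}'}$, with the normalization matching the one that appears on the spectral side — the twisted-group counterpart of $\theta_f=d(\pi)^{-1}f(1)\,\theta_\pi$; alongside it one must verify that the twisted matrix coefficient $\tilde{f}$ is strongly cuspidal. I expect both to follow from, or by the methods of, \cite{Wal12b}. Granting these, the rest is a formal comparison of the geometric and spectral expansions of $J(\tilde{f})$, entirely parallel to the passage from \cref{conj:geom_exp} to \cref{conj:mult_formula}.
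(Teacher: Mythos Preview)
Your proposal is correct and follows essentially the same route as the paper: choose a twisted matrix coefficient $\tilde{f}$ of $\tilde{\pi}'$, compute $J_\geom(\tilde{f})$ via the twisted analogue of \cite[Proposition 2.6.1]{BP18} (the paper simply asserts $\Theta_{\tilde{f}}=d(\pi')^{-1}\tilde{f}(\tau)\,\Theta_{\tilde{\pi}'}$ and the corresponding identity on germs), compute $J_\spec(\tilde{f})$ by Schur orthogonality, and compare. The only cosmetic difference is that the paper writes the common scalar as $d(\pi')^{-1}\tilde{f}(\tau)$ rather than $\Theta_{\tilde{\pi}'^\vee}(\tilde{f})$, and first deduces $\varepsilon_\geom(\pi',\chi)=\varepsilon(\pi',\chi)\,\tilde{m}(\pi',\chi)$ before invoking \cref{hyp:2}--\cref{hyp:4} to conclude $\tilde{m}(\pi',\chi)=1$; your identification of the twisted character identity as the key technical input is exactly right, and the paper treats it the same way you do.
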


\begin{proof}
Let $\pi\in\Irr_\disc(\bfG(F))$ such that $\omega_{\pi'}=\omega$ and $\pi'\cong\pi'^c$.
Take a matrix coefficient $\tilde{f}\in\cC(\tilde{\bfG},  \omega)$ of $\tilde{\pi}'$ satisfying $\tilde{f}(\tau)\neq0$.
Then we have
    \[
    \Theta_{\tilde{f}}(\tilde{x})=d(\pi')^{-1}\tilde{f}(\tau)\Theta_{\tilde{\pi}'}(\tilde{x}),  
    \qquad \tilde{x}\in\tilde{\bfG}_\reg(F).
    \]
From this,  we also obtain
    \[
    c_{\tilde{f},  \cO_{\tilde{t}'}}(\tilde{t}')
    =d(\pi')^{-1}\tilde{f}(\tau)c_{\tilde{\pi}',  \cO_{\tilde{t}'}}(\tilde{t}')
    \]
for all $\bfT'\in\cT_\elliptic(\bfH')$ and $t'\in \tilde{\bfT}'(F)\cap\tilde{\bfG}_\reg(F)$.
Hence we have $J_\geom(\tilde{f})=d(\pi')^{-1}\tilde{f}(\tau)\varepsilon_\geom(\pi',  \chi)$.

On the other hand,  by the Schur orthogonality relations,  we have
    \[
    J_\spec(\tilde{f})= \varepsilon(\pi',  \chi)\tilde{m}(\pi',  \chi)\Theta_{\tilde{\pi}'^\vee}(\tilde{f})
    = \varepsilon(\pi',  \chi)d(\pi')^{-1}\tilde{f}(\tau)\tilde{m}(\pi',  \chi).
    \]
Hence we obtain 
    \[
    \varepsilon_\geom(\pi',  \chi)=\varepsilon(\pi',  \chi)\tilde{m}(\pi',  \chi).
    \]

Since we assume \cref{hyp:3} and \cref{hyp:4},  $\tilde{m}(\pi',  \chi)=1$ is equivalent to that $\pi'$ has Shalika periods with respect to $\chi\circ\Nm_{E/F}$.
By \cref{hyp:2},  this is equivalent to that $\rec_{\bfG(F)}(\pi')$ takes values in $\GSp_n(\C)$ with similitude factor $\chi\circ\Nm_{E/F}$.
This proves \cref{conj:epsilon}.
\end{proof}

\begin{Ac}
The author would like to thank Chen Wan for answering many questions,  Yugo Takanashi for helpful discussions and the anonymous referee for careful reading and a lot of helpful comments.
This research is partially supported by JSPS KAKENHI (JP22K13891 and JP23K20785).
\end{Ac}

\begin{bibdiv}
\begin{biblist}

\bib{ABPS16}{article}{
   author={Aubert, Anne-Marie},
   author={Baum, Paul},
   author={Plymen, Raoger},
   author={Solleveld, Maarten},
   title={The local Langlands correspondence for inner forms of $\SL_n$},
   journal={Res.  Math.  Sci.},
   volume={3},
   number={32},
   pages={34 pp.},
   date={2016},
   review={\MR{3579297}},
   doi={10.1186/s40687-016-0079-4},
}

\bib{AC89}{article}{
    author = {Arthur, James},
    author = {Clozel, Laurent},
    book ={
    title = {Simple algebras, base change, and the advanced theory of the trace formula},
    series = {Annals of Mathematics Studies},
    publisher = {Princeton University Press, Princeton, NJ},
    volume = {120},
    },
    date = {1989},
    pages = {xiv+230},
    review = {\MR{1007299}},
}

%
%

\bib{BH06}{article}{
   author={Bushnell, Colin J. },
   author={Henniart, Guy},
      book={
      title={The local Langlands conjecture for {$\mathrm{GL}(2)$}},
      series={Grundlehren der mathematischen Wissenschaften},
      publisher={Springer-Verlag, Berlin},
      volume={335},
   },
   date={2006},
   pages={xii+347},
   review={\MR{2234120}},
   doi = {10.1007/3-540-31511-X},
}

\bib{BM21}{article}{
   author={Broussous, Paul},
   author={Matringe, Nadir},
   title={Multiplicity one for pairs of Prasad--Takloo-Bighash type},
   journal={Int.  Math.  Res.  Not.  IMRN},
   number={21},
   pages={16423--16447},
   date={2021},
   review={\MR{4338222}},
   doi={10.1093/imrn/rnz254},
}

\bib{BP14}{article}{
   author={Beuzart-Plessis,  Rapha\"el},
   title={Expression d'un facteur epsilon de paire par une formule int\'egrale},
   journal={Canad. J. Math.},
   volume={66},
   date={2014},
   number={5},
   pages={993--1049},
   review={\MR{3251763}},
   doi={10.4153/CJM-2013-042-4}
}

\bib{BP15}{article}{
   author={Beuzart-Plessis,  Rapha\"el},
   title={Endoscopie et conjecture locale raffin\'ee de Gan-Gross-Prasad pour les groupes unitaires},
   journal={Compos. Math.},
   volume={151},
   date={2015},
   number={7},
   pages={1309--1371},
   review={\MR{3371496}},
   doi={10.1112/S0010437X14007891}
}

\bib{BP16}{article}{
   author={Beuzart-Plessis,  Rapha\"el},
   title={La conjecture locale de Gross-Prasad pour les repr\'esentations temp\'er\'ees des groupes unitaires},
   journal={M\'em. Soc. Math. Fr. (N.S.)},
   volume={},
   date={2016},
   number={149},
   pages={vii+191},
   review={\MR{3676153}},
   doi={10.24033/msmf.457}
}

\bib{BP18}{article}{
   author={Beuzart-Plessis,  Rapha\"el},
   title={On distinguished square-integrable representations for Galois pairs and a conjecture of Prasad},
   journal={Invent.Math.},
   volume={214},
   date={2018},
   number={1},
   pages={437--521},
   review={\MR{3858402}},
   doi={10.1007/s00222-018-0807-z}
}

\bib{BP20}{article}{
   author={Beuzart-Plessis,  Rapha\"el}, 
   title={A local trace formula for the Gan-Gross-Prasad conjecture for unitary groups: the archimedean case},
   journal={Ast\'erisque},
   number={418},
   date={2020},
   pages={ix+305},
   issn={0303-1179},
   review={\MR{4146145}},
   doi={10.24033/ast.1120},
}

\bib{BPW19}{article}{
   author={Beuzart-Plessis,  Rapha\"el},
   author={Wan, Chen},
   title={A local trace formula for the generalized Shalika model},
   journal={Duke Math.  J.},
   volume={168},
   date={2019},
   number={7},
   pages={1303--1385},
   review={\MR{3953435}},
   doi={10.1215/00127094-2018-0064}
}

\bib{BPW}{article}{
   author={Beuzart-Plessis,  Rapha\"el},
   author={Wan, Chen},
   title={A local twisted trace formula for Whittaker induction of coregular symmetric pairs: the geometric side},
   journal={preprint},
   arxiv={2312.10845},
   doi={10.48550/arXiv.2312.10845},
}

\bib{Chen}{article}{
   author={Chen,  Cheng},
   title={The local Gan-Gross-Prasad conjecture for special orthogonal groups over
archimedean local fields},
   journal={preprint},
   arxiv={2102.11404},
   doi={10.48550/arXiv.2102.11404}
}

\bib{CL}{article}{
	author = {Chen,  Cheng},
	author = {Luo,  Zhilin},
	title = {The local Gross-Prasad conjecture over $\mathbb{R}$: Epsilon dichotomy},
	journal = {preprint},	
	arxiv={2204.01212},
	doi={10.48550/arXiv.2204.01212}
	}

\bib{CS20}{article}{
    author = {Chen,  Fulin},
    author = {Sun,  Binyong},
    title = {Uniqueness of twisted linear periods and twisted {S}halika periods},
   journal = {Sci. China Math.},
   volume = {63},
   year = {2020},
   number = {1},
   pages = {1--22},
   review = {\MR{4047168}},
   doi = {10.1007/s11425-018-9502-y},
}

\bib{DKV84}{article}{
   author={Deligne, P.},
   author={Kazhdan, D.},
   author={Vigneras, M.-F.},
   title={Repr\'esentations des alg\`ebres centrales simples $p$-adiques},
      book={
      title={Repr\'esentations des groupes r\'eductifs sur un corps local},
      series={Travaux en cours},
      publisher={Hermann,  Paris},
   },
   date={1984},
   pages={33--117},
   isbn={2-7056-5989-7},
   review={\MR{0771672}},
}

\bib{FJ93}{article}{
   author={Friedberg,  Solomon},
   author={Jacquet,  Herv\'e},
   title={Linear periods},
   journal={J.  Reine Angew.  Math.},
   volume={443},
   date={1993},
   number={},
   pages={91--139},
   review={\MR{1241129}},
   doi={10.1515/crll.1993.443.91},
}

\bib{GGP12}{article}{
   author={Gan, Wee-Teck},
   author={Gross,  Benedict},   
   author={Prasad,  Dipendra},   
   title={Symplectic local root numbers, central critical L-values, and restriction problems in the representation theory of classical groups},
   journal={Ast\'erisque},
   volume={346},
   date={2012},
   pages={},
   issn={},
   review={\MR{3052279}},
}

\bib{GGP20}{article}{
   author={Gan, Wee-Teck},
   author={Gross,  Benedict},   
   author={Prasad,  Dipendra},   
   title={Branching laws for classical groups: the non-tempered case},
   journal={Compositio Math.},
   volume={156},
   number={11},
   date={2020},
   pages={2298--2367},
   issn={},
   review={\MR{4190046}},
}

\bib{GO16}{article}{
   author={Gurevich, Maxim},
   author={Offen, Omer},
   title={A criterion for integrability of matrix coefficients with respect
   to a symmetric space},
   journal={J. Funct. Anal.},
   volume={270},
   date={2016},
   number={12},
   pages={4478--4512},
   issn={0022-1236},
   review={\MR{3490774}},
   doi={10.1016/j.jfa.2016.02.008},
}

\bib{Guo97}{article}{
   author={Guo, Jiandong},
   title={Uniqueness of generalized Waldspurger model for $\GL(2n)$},
   journal={Pacific J.  Math.},
   volume={180},
   number={2},
   date={1997},
   pages={273--289},
   issn={},
   review={\MR{1487565}},
   doi={10.2140/pjm.1997.180.273},
}

\bib{Hen00}{article}{
    author = {Henniart, Guy},
    title = {Une preuve simple des conjectures de {L}anglands pour {$\mathrm{GL}(n)$} sur un corps {$p$}-adique},
    journal = {Invent. Math.},
    volume = {139},
    date = {2000},
    number = {2},
    pages = {439--455},
    review = {\MR{1738446}},
    doi = {10.1007/s002220050012},
}

\bib{Hen02}{article}{
    author = {Henniart, Guy},
    title = {Une caract\'erisation de la correspondance de {L}anglands locale pour {$\mathrm{GL}(n)$}},
    journal = {Bull. Soc. Math. France},
    volume = {130},
    date = {2002},
    number = {4},
    pages = {587--602},
    review = {\MR{1947454}},
    doi = {10.24033/bsmf.2431},
}

\bib{HT01}{article}{
    author = {Harris, Michael},
    author = {Taylor, Richard},
    book= {
    title = {The geometry and cohomology of some simple {S}himura varieties},
    series = {Annals of Mathematics Studies},
    volume = {151},
     note = {With an appendix by Vladimir G. Berkovich},
    publisher = {Princeton University Press, Princeton, NJ},
     },
      date = {2001},
     pages = {viii+276},
    review = {\MR{1876802}},
}

\bib{JR96}{article}{
     author = {Jacquet, Herv\'{e}},
     author={Rallis, Stephen},
     title = {Uniqueness of linear periods},
     journal = {Compositio Math.},
     volume = {102},
     date = {1996},
     number = {1},
     pages = {65--123},
     review = {\MR{1394521}},
}


\bib{LM15}{article}{
   author={Lapid,Erez},
   author={Mao,Zhengyu},
   title={Model transition for representations of metaplectic type,  with an appendix by Marko Tadi\'c},
   journal={IMRN},
   volume={},
   number={19},
   pages={9486--9568},
   date={2015},
   review={\MR{3431601}},
   doi={10.1093/imrn/rnu225},
}

\bib{LM17}{article}{
   author={Lapid,Erez},
   author={Mao,Zhengyu},
   title={Whittaker-Fourier coefficients of cusp forms on $\widetilde{Sp}_n$: reduction to a local statement},
   journal={Amer.  J.  Math.},
   volume={139},
   number={1},
   pages={1--55},
   date={2017},
   review={\MR{3619910}},
   doi={10.1353/ajm.2017.0000},
}


\bib{Luo}{article}{
	author = {Luo,  Zhilin},
	title = {A Local Trace Formula for the Local Gan-Gross-Prasad Conjecture for Special Orthogonal Groups},
	journal = {preprint},	
	arxiv={2009.13947},
	doi={10.48550/arXiv.2009.13947}
	}

\bib{Mat14}{article}{
    author = {Matringe, Nadir},
    title = {Linear and {S}halika local periods for the mirabolic group,  and some consequences},
    journal = {J. Number Theory},
    volume = {138},
    date = {2014},
    pages = {1--19},
    review = {\MR{3168918}},
    doi = {10.1016/j.jnt.2013.11.012},
}

\bib{Mat15}{article}{
   author={Matringe, Nadir},
   title={On the local Bump-Friedberg $L$-function},
   journal={J. reine angew. Math.},
   volume={2015},
   date={2015},
   number={709},
   pages={119-170},
   issn={},
   review={\MR{3430877}},
   doi={10.1515/crelle-2013-0083},
}

\bib{MOY24}{article}{
   author={Matringe, Nadir},
   author={Offen,  Omer},
   author={Yang,  Chang},
   title={On local intertwining periods},
   journal={J.  Funct.  Anal.},
   volume={286},
   date={2024},
   number={4},
   issn={},
   review={\MR{4679384}},
   doi={10.1016/j.jfa.2023.110293},
}

\bib{MW}{article}{
   author={M\oe glin,  Collete}, 
   author={Waldspurger,  Jean-Loup}, 
   title={La conjoncture locale de Gross-Prasad pour les groupes sp\'eciaux orthogonaux : le cas g\'en\'eral},  
   journal={Ast\'erisque},
   volume={347},
   date={2012},
   pages={167--216},
   review={\MR{3155346}},
   doi={10.24033/ast.920},
}

\bib{PTB11}{article}{
   author={Prasad, Dipendra},
   author={Takloo-Bighash, Ramin},
   title={Bessel models for $\mathrm{GSp}(4)$},
   journal={J. Reine Angew. Math.},
   volume={655},
   date={2011},
   pages={189--243},
   issn={0075-4102},
   review={\MR{2806111}},
   doi={10.1515/CRELLE.2011.045},
}

\bib{Sec}{article}{
   author={S\'echerre, Vincent},
   title={Repr\'esentations cuspidales de $\GL_r(D)$ distingu\'ees par une 
   involution int\'erieure},
   journal={preprint},
   arxiv={2005.05615},
  doi = {10.48550/arXiv.2005.05615},
}

\bib{Suz21}{article}{
   author={Suzuki, Miyu},
   title={Classification of standard modules with linear periods},
   journal={J. Number Theory},
   volume={218},
   date={2021},
   pages={302--310},
   review={\MR{4157701}},
   doi={10.1016/j.jnt.2020.07.005}
}

\bib{ST23}{article}{
   author={Suzuki, Miyu},
   author={Tamori, Hiroyoshi},
   title={Epsilon dichotomy for linear models: the Archimedean case},
   journal={Int.  Math.  Res.  Not.  IMRN},
   number={20},
   date={2023},
   pages={17853--17891},
   review={\MR{4659866}},
   doi={10.1093/imrn/rnad110}
}

\bib{SX23}{article}{
   author={Suzuki, Miyu},
   author={Xue, Hang},
   title={Linear intertwining periods and epsilon dichotomy for linear models},
   journal={Math.  Ann.},
   date={2023},
   pages={},
   review={},
   doi={10.1007/s00208-023-02615-9}
}


\bib{Wal10}{article}{
   author={Waldspurger,  J.-L.},
   title={Une formule int\'egrale reli\'ee \`a la conjecture locale de Gross-Prasad},
   journal={Compos.  Math.},
   volume={146},
   date={2010},
   number={5},
   pages={1180--1290},
   review={\MR{2684300}},
   doi={10.1112/S0010437X10004744}
}

\bib{Wal12a}{article}{
   author={Waldspurger,  J.-L.},
   title={Une formule int\'egrale reli\'ee \`a la conjecture locale de Gross-Prasad, 2e partie : Extension aux repr\'esentations temp\'er\'ees},
   journal={Ast\'erisque},
   volume={346},
   date={2012},
   pages={171--312},
   review={\MR{3202558}},
   doi={10.24033/ast.915},
}

\bib{Wal12b}{article}{
   author={Waldspurger,  Jean-Loup},
   title={Calcul d'une valeur d'un facteur $\epsilon$ par une int\'egrale},
   journal={Ast\'erisque},
   volume={347},
   date={2012},
   number={347},
   pages={1--102},
   review={\MR{3155344}},
   doi={10.24033/ast.919},
}

\bib{Wal12c}{article}{
   author={Waldspurger,  J.-L.},
   title={La conjecture locale de Gross-Prasad pour les repr\'esentations temp\'er\'ees des groupes sp\'eciaux orthogonaux},
   journal={Ast\'erisque},
   volume={347},
   date={2012},
   pages={103--166},
   review={\MR{3155345}},
   doi={10.24033/ast.918},
}

\bib{Wan19a}{article}{
    author = {Wan, Chen},
    title = {Multiplicity one theorem for the Ginzburg-Rallis model: the tempered case},
    journal = {Trans. Amer. Math. Soc.},
    volume = {371},
    date = {2019},
    number = {11},
    pages = {7949--7994},
    review = {\MR{3955540}},
    doi = {10.1090/tran/7690},
}

\bib{Wan19b}{article}{
   author={Wan, Chen},
   title={A local relative trace formula for the Ginzburg-Rallis model: the geometric side},
   journal={Mem.  Amer.  Math.  Soc.},
   volume={261},
   date={2019},
   number={1263},
   pages={v+90 pp.},
   review={\MR{4028458}},
   doi={10.1090/memo/1263}
}

\bib{Wan22}{article}{
   author={Wan, Chen},
   title={On a multiplicity formula for spherical varieties},
   journal={J.  Eur.  Math.  Soc.},
   volume={24},
   date={2022},
   number={10},
   pages={3629--3678},
   review={\MR{4432908}},
   doi={10.4171/JEMS/1172}
}

\bib{WZa}{article}{
  author={Wan,Chen},
  author={Zhang, Lei},
  title={The multiplicity problems for the unitary Ginzburg-Rallis models},
  journal = {preprint},  
  arxiv={1808.02203},
  doi = {10.48550/arXiv.1808.02203},
}

\bib{WZb}{article}{
  author={Wan,Chen},
  author={Zhang, Lei},
  title={Periods of automorphic forms associated to strongly tempered spherical varieties},
  journal = {preprint},  
  arxiv={2102.03695},
  doi = {10.48550/arXiv.2102.03695},
}

\bib{WZc}{article}{
  author={Wan,Chen},
  author={Zhang, Lei},
  title={Multiplicities for strongly tempered spherical varieties},
  journal={preprint},
  arxiv={2204.07977},
  doi = {10.48550/arXiv.2204.07977},
}

\bib{Xue21}{article}{
   author={Xue, Hang},
   title={Epsilon dichotomy for linear models},
   journal={Algebra Number Theory},
   volume={15},
   date={2021},
   number={1},
   pages={173--215},
   review={\MR{4226986}},
   doi={10.2140/ant.2021.15.173}
}

\bib{CZhang16}{article}{
   author={Zhang,  Chong},
   title={Local periods for discrete series representations},
   journal={J. Funct. Anal.},
   volume={271},
   date={2016},
   number={6},
   pages={1525--1543},
   issn={},
   review={\MR{3530582}},
   doi={10.1016/j.jfa.2016.06.002},
}

\end{biblist}
\end{bibdiv}

\end{document}